\colorlet{darkblue}{blue!50!black}
\colorlet{darkblue}{blue!50!black}
\newcommand{\vertiii}[1]{{\left\vert\kern-0.25ex\left\vert\kern-0.25ex\left\vert #1
		\right\vert\kern-0.25ex\right\vert\kern-0.25ex\right\vert}}
\newcommand{\bae}{\begin{equation}\begin{aligned}}
		\newcommand{\eae}{\end{aligned}\end{equation}}
\newcommand{\baee}{\begin{equation*}\begin{aligned}}
		\newcommand{\eaee}{\end{aligned}\end{equation*}}
\theoremstyle{plain}
\newtheorem*{lemma*}{Lemma}
\newtheorem{theorem}{Theorem}[section]
\newtheorem{lemma}[theorem]{Lemma}
\theoremstyle{definition}
\newtheorem{definition}[theorem]{Definition}
\newtheorem{condition}[theorem]{Condition}
\theoremstyle{remark}
\newtheorem{remark}{Remark}[section]
\numberwithin{equation}{section}
\newcommand{\ea}{\end{array}}
\newcommand{\worknote}[1]{}
\newcommand{\eref}[1]{(\ref{#1})}
\def\<{{\langle}}
\def\>{{\rangle}}
\begin{document}
\title[Stochastic PDEs]
{Stochastic PDEs with Generalized Coercivity: Global Well-posedness and Finite Time Extinction$^\dagger$}
\thanks{$\dagger$
This work is supported by National Key R\&D program of China (No. 2023YFA1010101). The research of W. Hong is also supported by  NSFC (No.~12401177) and  NSF of Jiangsu Province (No.~BK20241048). The research of S. Li is also supported by NSFC (No.~12371147). The research of W. Liu is also supported by NSFC (No.~12171208, 12090011,12090010) and the PAPD of Jiangsu Higher Education Institutions.}

\maketitle
\centerline{\scshape Wei Hong,  Shihu Li, Wei Liu\footnote{Corresponding author: weiliu@jsnu.edu.cn} }
\medskip
\centerline{ School of Mathematics and Statistics, Jiangsu Normal University, Xuzhou, 221116, China}

\begin{abstract}
This work investigates the global existence, uniqueness, and Feller property for stochastic partial differential equations  under generalized coercivity conditions, particularly in cases where the corresponding deterministic equations possess only local solutions.
Furthermore, we reveal a novel phenomenon: for a potentially explosive deterministic system, the introduction of appropriate  multiplicative noise not only  prevents blow-up but also leads to the finite-time extinction of the stochastic dynamics.
Our main results are applicable to a broad range of models, including stochastic 3D Navier-Stokes equations, stochastic surface growth models, and stochastic $p$-Laplace equations with heat sources.

\bigskip
\noindent
\textbf{Keywords}:
Stochastic PDEs; Generalized coercivity;  Well-posedness;  Feller property; Finite-time extinction.\\
\textbf{Mathematics Subject Classification (2020)}: 60H15, 35R60

\end{abstract}

\tableofcontents

\section{Introduction}
This work focuses on stochastic partial differential
equations (SPDEs) within the variational framework. Originating from the pioneering work of Minty \cite{Mi62}, the variational approach was subsequently and systematically developed by Browder \cite{Bro63,Bro64}, Leray and Lions \cite{LL65}, and Hartman and Stampacchia \cite{HS66}.

We begin by reviewing the fundamental concepts of the variational approach and summarizing relevant developments in the literature.
Let $(U,\langle\cdot,\cdot\rangle_U)$ and $({\mathbb{H}}, \langle\cdot,\cdot\rangle_{\mathbb{H}}) $ be  separable Hilbert spaces, and ${\mathbb{H}}^*$ be the dual space of ${\mathbb{H}}$. Let $({\mathbb{V}},\|\cdot\|_{\mathbb{V}})$ denote a reflexive Banach space such that the embedding
$$\mathbb{V}\subset \mathbb{H}$$
is continuous and dense. Identifying $\mathbb{H}$ with its dual space in view of the Riesz isomorphism, we obtain a Gelfand triple
\begin{equation}\label{gel}
 {\mathbb{V}}\subset {\mathbb{H}}(\simeq {\mathbb{H}}^*)\subset {\mathbb{V}}^*.
 \end{equation}
The dualization between ${\mathbb{V}}$ and ${\mathbb{V}}^*$ is denoted by $_{{\mathbb{V}}^*}\langle\cdot,\cdot\rangle_{\mathbb{V}}$. It is clear that
$$_{{\mathbb{V}}^*}\langle\cdot,\cdot\rangle_{\mathbb{V}}|_{{\mathbb{H}}\times {\mathbb{V}}}=\langle\cdot,\cdot\rangle_{\mathbb{H}}.$$
Let $L_2(U,\mathbb{H})$ be  the space of all Hilbert-Schmidt operators from $U$ to $\mathbb{H}$.
For measurable maps
$$
\mathcal{A}:[0,T]\times {\mathbb{V}}\rightarrow {\mathbb{V}}^*,~~\mathcal{B}:[0,T]\times {\mathbb{V}}\rightarrow L_2(U,\mathbb{H}),
$$
we consider the following SPDE
\begin{equation}\label{eqSPDE}
dX_t=\mathcal{A}(t,X_t)dt+\mathcal{B}(t,X_t)dW_t,~~X_0=x,
\end{equation}
where $\{W_t\}_{t\in [0,T]}$ is an $U$-valued cylindrical Wiener process defined on a complete filtered probability space $\left(\Omega,\mathscr{F},\{\mathscr{F}_t\}_{t\in [0,T]},\mathbb{P}\right)$.

In the deterministic case (i.e.,~$\mathcal{B}\equiv0$), the classical result  states that Eq.~\eref{eqSPDE} has a unique solution if the operator $\mathcal{A}$ satisfies the monotonicity condition
\begin{eqnarray}\label{I04a}
_{\mathbb{V}^*}\langle \mathcal{A}(t,u)-\mathcal{A}(t,v),u-v\rangle_{\mathbb{V}}\leq
C\|u-v\|_{\mathbb{H}}^2
\end{eqnarray}
and the coercivity condition (see e.g.~\cite[Thoerem 30.A]{Z90})
\begin{eqnarray}\label{I04}
_{\mathbb{V}^*}\langle \mathcal{A}(t,u),u\rangle_{\mathbb{V}}+\delta\|u\|_{\mathbb{V}}^\alpha\leq
C\|u\|_{\mathbb{H}}^2+C,
\end{eqnarray}
 where $\delta,C>0$ and $\alpha>1$.

In the stochastic setting, the variational approach was initiated by Pardoux \cite{Par75} and Krylov and Rozovskii \cite{KR} for SPDEs with coefficients satisfying the monotonicity (\ref{I04a}) and coercivity (\ref{I04}) conditions. These classical results apply to a class of quasi-linear stochastic models, such as the stochastic porous medium equation and the stochastic $p$-Laplace equation (see, e.g., \cite{PR07,RRW07}).

It is important to note that the previous works, both in  deterministic and stochastic cases, require that the operator $\mathcal{A}$  satisfies the classical coercivity condition \eref{I04}, which ensures global-in-time bounds of solutions. In this work, we are interested in SPDEs  satisfying the following generalized coercivity condition
\begin{eqnarray} \label{I05}
2_{\mathbb{V}^*}\langle \mathcal{A}(t,u),u\rangle_{\mathbb{V}}+\delta\|u\|_{\mathbb{V}}^\alpha\leq g(\|u\|_{\mathbb{H}}^2)+C,
\end{eqnarray}
where $\delta,C>0$ and $g:[0,\infty)\to[0,\infty)$ is a non-decreasing continuous function. This condition was considered by Liu and R\"{o}ckner in \cite[Section 5.2]{LR1} (see also \cite{LR13}). Many important models satisfy the condition \eref{I05}, yet fail to satisfy the classical coercivity condition \eref{I04}. Typical  models that fit into this framework include
\begin{itemize}
  \item \textbf{3D Navier--Stokes equations}:
  \begin{eqnarray*}
\left\{
  \begin{aligned}
 &\partial_t{u}=\nu\Delta{u}-(u\cdot{\nabla})u-\nabla{p}+f, \\
    &\text{div}(u)=0.
  \end{aligned}
\right.
\end{eqnarray*}

  \item \textbf{Surface growth models}:
  $$\partial_t{u}= -\partial_x^4 u-\partial_x^2 u+\partial_x^2(\partial_x u)^2.$$

  \item \textbf{$p$-Laplace equations with heat sources}:
  $$\partial_t{u}= \text{div}(|\nabla u|^{p-2}\nabla u)+u^{2}.$$
\end{itemize}
Under this  assumption,  they proved the existence and uniqueness of \emph{local solutions} to SPDEs \eref{eqSPDE} perturbed by additive noise (i.e.,~$\mathcal{B}(t,u)=\mathcal{B}(t)$).
However, the global well-posedness of SPDEs under this generalized coercivity condition has not yet been established.

\subsection{Well-posedness}

The first aim  of this paper is to establish the well-posedness of SPDEs  satisfying the generalized coercivity condition (\ref{I05}).  Before presenting our main results, we first review the research progress achieved within the variational framework in the literature.   In \cite{LR2}, Liu and R\"{o}ckner  extended the classical variational framework by introducing the following local monotonicity condition
\begin{eqnarray}\label{I04b}
_{\mathbb{V}^*}\langle \mathcal{A}(t,u)-\mathcal{A}(t,v),u-v\rangle_{\mathbb{V}}\leq
(C+\rho(v))\|u-v\|_{\mathbb{H}}^2,
\end{eqnarray}
where $\rho: \mathbb{V}\rightarrow[0,+\infty)$ is a measurable function that is locally bounded on $\mathbb{V}$. This extension includes various  examples that cannot be treated previously, such as stochastic Burgers equations and stochastic 2D Navier-Stokes equations.

More recently, several notable advancements have been made in this direction. Agresti and Veraar \cite{AV24} introduced a critical variational framework for SPDEs under local Lipschitz conditions, greatly generalizing the classical setup.  By employing pseudo-monotone operators and compactness arguments, Shang et al.~\cite{RSZ1} further generalized the variational framework to a fully local monotonicity condition
\begin{eqnarray}\label{I04b}
_{\mathbb{V}^*}\langle \mathcal{A}(t,u)-\mathcal{A}(t,v),u-v\rangle_{\mathbb{V}}\leq
(C+\rho(v)+\eta(u))\|u-v\|_{\mathbb{H}}^2,
\end{eqnarray}
where $\rho$ and $\eta$ are measurable and  locally bounded on $\mathbb{V}$. The results in \cite{AV24} and \cite{RSZ1} can be applied to several new models, including the 3D tamed Navier-Stokes equations and Cahn-Hilliard equations. For further developments on the variational framework, we refer the interested reader to \cite{GC1,HHL,HLL25,HLL26,LR21,NTT21,W26,W15} and the references therein.

In the present paper, we study global solutions of SPDEs whose coefficients satisfy the generalized coercivity condition \eqref{I05} and fully local monotonicity condition \eref{I04b} under suitable stochastic perturbations. More precisely, we consider diffusion coefficients in the random noise satisfying
\begin{equation}\label{I06}
g(\|u\|_{\mathbb{H}}^2)+\|\mathcal{B}(t,u)\|_{L_2({U},{\mathbb{H}})}^2
\leq C(1+\|u\|_{\mathbb{H}}^2)+\eta_0\frac{\|\mathcal{B}(t,u)^*u\|_{U}^2}{(1+\|u\|_{\mathbb{H}}^2)},
\end{equation}
where  $\eta_0\in(1,2)$,  $C>0$, and  $g$ is the same function as in \eref{I05}.
This condition reflects an anisotropic stabilizing mechanism: only the component of the noise acting along the energy direction contributes to the prevention of blow-up.

Our first main results concern the global well-posedness and the Feller property of SPDEs under the fully local monotonicity condition \eqref{I04b} and the generalized coercivity condition \eqref{I05} and \eqref{I06}. A more precise statement of these results is provided in Theorems \ref{th1} and \ref{th2}.

\begin{theorem}\label{th01a}
Suppose that the embedding $ \mathbb{V}\subset \mathbb{H}$ is compact and that assumptions $(\mathbf{A_1})$-$(\mathbf{A_5})$ (see Subsection \ref{sec.2.2a} below) hold.
\begin{enumerate}[(i)]
  \item For any initial data $x\in\mathbb{H}$,
Eq.~(\ref{eqSPDE}) has a unique strong solution $X_t$. Moreover, the following energy moment estimates hold
\begin{equation}\label{apri001}
\sup_{t\in[0,T]}\mathbb{E}\|X_t\|_{\mathbb{H}}^{2-\eta_0}+\mathbb{E}\int_0^T\|X_t\|_{\mathbb{V}}^{\alpha-\eta_0}dt<\infty.
\end{equation}
  \item The solution depends continuously on the initial values. In particular, the corresponding Markov semigroup $(\mathcal{T}_t)_{t\geq 0}$ possesses the Feller property.
\end{enumerate}

\end{theorem}

We make several remarks regarding the above results.
\begin{itemize}
  \item Note that under the generalized coercivity condition \eqref{I05}, the corresponding deterministic PDE (i.e.,~when the noise term vanishes) may exhibit finite-time blow-up. Consequently, compared with existing literature, the key feature of our work lies in capturing the effect of random noise, which, to the best of our knowledge, is the first such result within the variational framework.

  \item To establish the global existence of solutions, it is essential to balance the influence of the noise intensity against the growth rate of the nonlinear terms. In this regard, condition \eqref{I06} plays a crucial role. Roughly speaking, when $g(x)=C_0 x$, which corresponds to the classical coercivity condition \eqref{I04}, the diffusion term can either vanish or take the form of additive, linear, or superlinear multiplicative noise. In contrast, if $g(x)=C_0 x^p$ with $p>1$, then $\mathcal{B}(t,\cdot)$ is required to exhibit superlinear growth.

  \item From the energy estimate \eqref{apri001}, one observes that, unlike existing results in the variational framework (see, e.g., \cite{LR2,RSZ1}), finite second moments are not available for the global solutions. This limitation  is inherent to the Lyapunov approach utilized to characterize the influence of stochastic perturbations.
\end{itemize}

Our new framework establishes general conditions that guarantee global existence and uniqueness. We demonstrate that various classical SPDEs, which lie outside the scope of the standard variational framework, indeed satisfy the generalized coercivity condition \eqref{I05}. In particular, our results apply to stochastic $p$-Laplace equations with heat sources. Furthermore, our main theorems can be applied to a broad class of stochastic models arising in fluid dynamics, such as the stochastic 3D Navier--Stokes equations and the stochastic surface growth models; these models cannot be accommodated by existing works \cite{AV24,LR2,NTT21, RSZ1}. Notably, the global well-posedness of the  deterministic 3D Navier--Stokes equations remains a long-standing open problem.

Moreover,  all locally or fully locally monotone examples studied in \cite{LR2} and \cite{RSZ1}, such as stochastic porous medium equations, stochastic 2D Navier--Stokes equations, stochastic Cahn--Hilliard equations, stochastic liquid crystal models, and stochastic Allen--Cahn--Navier--Stokes systems, can be accommodated within the more general framework established in this paper. Furthermore, for these models, our results can be applied to handle random noises with both linear and superlinear growth.

\subsection{Finite-time extinction}

Building on the global well-posedness result, we investigate the asymptotic behavior of Eq.~(\ref{eqSPDE}) in this setting.  In particular, a natural question concerns the influence of noise on the long-time dynamics of stochastic systems. Here, we mainly focus on whether suitable  stochastic perturbations can prevent or promote finite-time extinction of solutions.

The phenomenon of finite-time extinction is generally divided into the following three
cases
\begin{enumerate}[(i)]

\item \textbf{Local weak extinction:} For small initial values,  we have
$$\mathbb{P}(X_t~\text{extinct in finite time})>0;$$

\vspace{1mm}
\item \textbf{Global weak extinction:} For all initial values, we have
$$\mathbb{P}(X_t~\text{extinct in finite time})>0;$$

\vspace{1mm}
\item \textbf{Almost sure global extinction:} For all initial values, we have
$$\mathbb{P}(X_t~\text{extinct in finite time})=1.$$
\end{enumerate}
While Regimes (i) and (ii) are mathematically intriguing,  the robustness of the relaxation into subcritical states in self-organized criticality (SOC) is of fundamental importance in physics. Consequently,   Regime (iii) aligns most closely with the paradigm of SOC.

There have been many studies in the literature on the finite-time extinction of stochastic systems. For example,  Barbu, Da Prato and R\"{o}ckner \cite{BDR09}
investigated the finite-time extinction with positive probability for 1D  stochastic SOC models driven by linear multiplicative noise. In a subsequent work \cite{BR12}, Barbu and R\"{o}ckner established asymptotic extinction results with probability one for stochastic porous media equations in dimension $d \in \{1, 2, 3\}$. This result was further extended by R\"{o}ckner and Wang \cite{RW13} to more general cases. Specifically, they proved the
finite-time extinction with probability one for the Zhang model,  and with positive probability for the Bak-Tang-Wiesenfeld (BTW) model. Notably, the finite-time extinction of the BTW model with probability one remained an open problem until it was resolved by Gess \cite{Gess15}   for all dimensions  $d\geq1$. For more results of this topic, we refer interested readers to
\cite{BDR09b,BRR15,Hensel21} and references therein.

It is worth noting that existing results on finite-time extinction  relies on the structure of linear multiplicative noise.  In contrast, our work establishes almost sure  extinction  for any initial value $x\in\mathbb{H}$ (i.e.~Case (iii)) for a large class of quasi-linear SPDEs driven by linear or nonlinear multiplicative noises. This leads to the following main result (for details, see Theorem \ref{th3} below).

\begin{theorem}\label{th3a10}
Let $\tau_{e}$ denote the extinction time. Under the assumptions of Theorem \ref{th01a}, together with the additional conditions $(\mathbf{A_3^*})$ and $(\mathbf{A_5^*})$ (see Section \ref{sec.finite} for details), for any initial value $x\in\mathbb{H}$, the solution $X_t$ of Eq.~(\ref{eqSPDE}) is extinct in finite time almost surely. Furthermore, the extinction time satisfies the estimates
\begin{equation*}
\mathbb{P}(\tau_{e}>T) \leq \frac{c_0\|x\|_{\mathbb{H}}^{2-\alpha}}{T}
\end{equation*}
and
\begin{equation*}
\mathbb{E}\tau_{e} \leq c_0 \|x\|_{\mathbb{H}}^{2-\alpha},
\end{equation*}
where $c_0$ is an explicit constant.
\end{theorem}

As an application of our general results, we consider the following stochastic $p$-Laplace equation with nonlinear sources
\begin{equation}\label{I07}
dX_t=\text{div}(|\nabla X_t|^{p-2}\nabla X_t)dt+ \lambda X_t^{2}dt+\mathcal{B}(t,X_t)dW_t,
\end{equation}
where  $\lambda=\pm1$, Eq.~\eref{I07} is degenerate if $p > 2$ or singular if $1 < p < 2$.
Nonlinear parabolic equation like \eref{I07} appears in various applications. For instance, in combustion theory, the function $X_t$ represents the temperature, the term $\text{div}(|\nabla X_t|^{p-2}\nabla X_t)$ represents the thermal diffusion, and the nonlinear source $X_t^{2}$ is physically called the ``hot source'', while the source $- X_t^{2}$ is known as the ``cool source''. These different sources have completely different influences on the properties of solutions (cf.~e.g.~\cite{CY07,Nguyen,TY08}).
In the deterministic setting (i.e.,~$\mathcal{B}(t,u)\equiv0$) under the singular regime  $1 < p < 2$  and $\lambda=1$, solutions may blow up in finite time for some initial values, thereby there is no global-in-time solution  (see~e.g.~\cite {LC03,Nguyen}). In this work,  we demonstrate that  the addition of suitable random noise can effectively prevent such  blow-up.

More importantly, we discover a new phenomenon: for a deterministic model that  exhibits blow-up,  appropriate stochastic perturbations lead to the finite-time extinction of the  stochastic counterpart. This  result may shed new insight how noise affects the long-time behavior of stochastic systems.

\subsection{Proof strategy}
Now, we outline the main ideas  of the proof. First, to construct a probabilistically weak solution to \eqref{eqSPDE}, we combine a stochastic compactness argument with techniques from the theory of pseudo-monotone operators. In contrast to existing works (cf.~\cite{LR2,RSZ1}), under the generalized coercivity condition \eqref{I05}, we cannot guarantee that the Galerkin approximating solutions $\{X^{(n)}\}_{n\in\mathbb{N}}$ have finite second moments. Instead, we establish energy bounds of order $2-\eta_0$ with $\eta_0\in(1,2)$ for $\{X^{(n)}\}_{n\in\mathbb{N}}$  by constructing a suitable Lyapunov function. Then, by employing a refined stopping time argument, we prove tightness  in the space
$$\mathcal{Z}^1_T:=C([0,T];\mathbb{V}^*)\cap L^{\alpha}([0,T];\mathbb{H})\cap L^{\alpha}_w([0,T];\mathbb{V}),$$
where $ L^{\alpha}_w([0,T];{\mathbb{V}})$  denotes the space $L^{\alpha}([0,T];{\mathbb{V}})$  endowed with the weak topology.

Notably, the lack of finite second moments for $\{X^{(n)}\}_{n\in\mathbb{N}}$ introduces non-trivial technical difficulties in the proof. Specifically, we cannot follow the arguments in \cite{LR2,RSZ1} to obtain the weak convergence of the sequence $\{\mathcal{A}(\cdot,X^{(n)}_{\cdot})\}_{n\in\mathbb{N}}$ or the convergence of the sequence $\{\mathcal{B}(\cdot, X^{(n)}_{\cdot})\}_{n\in\mathbb{N}}$. To address this, we establish the tightness of $\{\mathcal{A}(\cdot,X^{(n)}_{\cdot})\}_{n\in\mathbb{N}}$ in the space
$$\mathcal{Z}^2_T:=L^{\frac{\alpha}{\alpha-1}}_w([0,T];\mathbb{V}^*).$$
Note that $\mathcal{Z}^1_T$ and $\mathcal{Z}^2_T$ are not Polish spaces. In this case, we apply the Jakubowski's  generalization of the Skorokhod representation theorem for nonmetric spaces, as presented by  Brze\'{z}niak and Ondrej\'{a}t \cite{BO}. This yields the almost sure convergence of both
$$\{X^{(n)}\}_{n\in\mathbb{N}}~\text{and}~\{\mathcal{A}(\cdot,X^{(n)}_{\cdot})\}_{n\in\mathbb{N}}$$ to limit elements $\tilde{X}$ in $\mathcal{Z}^1_T$ and $\tilde{\mathcal{A}}(\cdot)$ in $\mathcal{Z}^2_T$, respectively,  on a new probability space. To justify this step, it is crucial to prove that  $\mathcal{Z}^1_T$ and $\mathcal{Z}^2_T$ are  standard Borel spaces under an appropriate topology (cf.~Remark \ref{k2t} and Lemma \ref{thsb} for details).
Then, using a cut-off technique, we identify the limit and establish the strong convergence of
 $\mathcal{B}(\cdot, X^{(n)}_{\cdot})$ to $\mathcal{B}(\cdot, \tilde X_{\cdot})$.
Additionally, by exploiting the  pseudo-monotonicity of $\mathcal{A}(t,\cdot)$, we establish the weak convergence of  $\mathcal{A}(\cdot, X^{(n)}_{\cdot})$ to $\mathcal{A}(\cdot, \tilde X_{\cdot})$.

However, it is noted that we only have estimates of order $2-\eta_0$ for $\tilde X_t$,  the classical It\^{o}'s formula (cf.~Theorem 4.2.5 in \cite{LR1}) within the variational framework cannot be directly applied. To overcome this difficulty, we utilize a refined stopping time technique combined with a  localizaion procedure.   Based on the It\^{o}'s formula, we prove that $\tilde{X}\in C([0,T];\mathbb{H})$, which establishes that $\tilde{X}$ is a probabilistically weak solution. Then, the existence of a unique probabilistically strong solution follows from pathwise uniqueness and the Yamada--Watanabe theorem.

\subsection{Structure of the paper} The rest of paper is organized as follows. In Sect.~\ref{mainR}, we introduce  the main results about the existence, uniqueness, Feller property as well as finite-time extinction in Theorems \ref{th1}-\ref{th3}, respectively. Then in Sec.~\ref{exam}, we apply our general framework to concrete examples to illustrate the  wide applicability of the main results. In Sect.~\ref{proofM}, we give the proofs of Theorems \ref{th1}-\ref{th3}. We also recall some useful lemmas  in the Appendix.
Throughout this paper, $C_{p}$  denotes some positive constant which may change from line to line, where the subscript $p$ is used to emphasize that the constant depends on certain parameter $p$.

\section{Main results}\label{mainR}

We  recall  some fundamental definitions and notations that are frequently used in the paper.

\vspace{1mm}
For any Banach space $(\mathbb{B},\|\cdot\|_{\mathbb{B}})$, we denote by $\mathbb{C}_T(\mathbb{B}):=C([0,T];\mathbb{B})$ the space of all continuous functions from $[0,T]$ to $\mathbb{B}$, which is a Banach space equipped with  the uniform norm given by
$$\|u\|_{\mathbb{C}_T(\mathbb{B})}:=\sup_{t\in[0,T]}\|u_t\|_{\mathbb{B}},~u\in \mathbb{C}_T(\mathbb{B}).$$
Let $\mathscr{B}_b(\mathbb{B})$ (resp.~$C_b(\mathbb{B})$) be the space of all bounded and Borel measurable (resp.~continuous) functions on $\mathbb{B}$.

In this paper, we will employ the theory of pseudo-monotone operators. To this end, we first recall the definition of the pseudo-monotone operator. For abbreviation, we use the notation  ``$\rightharpoonup$'' for the weak convergence in a Banach space.

\begin{definition}\label{de2} An operator $\mathcal{A}$ from $\mathbb{V}$ to $\mathbb{V}^*$ is called to be a pseudo-monotone operator if $u_n\rightharpoonup u$ in $\mathbb{V}$ and
$$\liminf _{n \to \infty}\,_{\mathbb{V}^*}\langle \mathcal{A}(u_{n}), u_{n}-u\rangle_{\mathbb{V}} \geq 0,$$
then for any $v \in \mathbb{V}$,
$$\limsup _{n \to \infty}\,_{\mathbb{V}^*}\langle \mathcal{A}(u_{n}), u_{n}-v\rangle_{\mathbb{V}} \leq \,_{\mathbb{V}^*}\langle \mathcal{A}(u), u-v\rangle_{\mathbb{V}}.$$
\end{definition}

\begin{remark}
Note that Browder \cite{Browder1977} introduced a different definition  of pseudo-monotone operator: An operator $\mathcal{A}$ from $\mathbb{V}$ to $\mathbb{V}^*$ is called  pseudo-monotone if $u_n\rightharpoonup u$ in $\mathbb{V}$ and
$$\liminf _{n \to \infty}\,_{\mathbb{V}^*}\langle \mathcal{A}(u_{n}), u_{n}-u\rangle_{\mathbb{V}} \geq 0$$
implies $\mathcal{A}(u_n)\rightharpoonup\mathcal{A}(u)$ in $\mathbb{V}^*$ and
$$\lim _{n \to \infty}\,_{\mathbb{V}^*}\langle \mathcal{A}(u_{n}), u_{n}\rangle_{\mathbb{V}}=\,_{\mathbb{V}^*}\langle \mathcal{A}(u), u\rangle_{\mathbb{V}}.$$
This definition turns out to be equivalent to Definition \ref{de1} (cf.~\cite[Remark 5.2.12]{LR1}).
\end{remark}

\subsection{Well-posedness and Feller property}\label{sec.2.2a}
In this subsection, we consider the well-posedness and Feller property of SPDEs (\ref{eqSPDE}).
To this end, we first recall the (probabilistically) weak and strong solutions to SPDEs (\ref{eqSPDE}) as follows.
\begin{definition}\label{dew} $($Weak solution$)$ A pair $(X,W)$ is called a (probabilistically) weak solution to SPDE (\ref{eqSPDE}), if there exists  $(\Omega,\mathscr{F},\{\mathscr{F}_t\}_{t\in[0,T]},\mathbb{P})$ such that $X$ is an $\{\mathscr{F}_t\}$-adapted process and  $W$ is an $U$-valued cylindrical Wiener process on $(\Omega,\mathscr{F},\{\mathscr{F}_t\}_{t\in[0,T]},\mathbb{P})$ and the following holds:

\vspace{2mm}
(i) $X\in \mathbb{C}_T(\mathbb{H})$, $\mathbb{P}$-a.s.;

\vspace{2mm}
(ii) $\int_0^T\|\mathcal{A}(s,X_s)\|_{\mathbb{V}^*}ds+\int_0^T\|\mathcal{B}(s,X_s)\|_{L_2(U;\mathbb{H})}^2ds<\infty$, $\mathbb{P}$-a.s.;

\vspace{2mm}
(iii)
$X_t=X_0+\int_0^t \mathcal{A}(s,X_s)ds+\int_0^t \mathcal{B}(s,X_s)dW_s,t\in[0,T],\mathbb{P}\text{-a.s.}$
holds in ${\mathbb{V}}^*$.
\end{definition}

\begin{definition}\label{de1}
(Strong solution) We say that there exists a (probabilistically) strong solution to (\ref{eqSPDE}) if for every probability space $(\Omega,\mathscr{F},\{\mathscr{F}_t\}_{t\in[0,T]},\mathbb{P})$ with an $U$-valued cylindrical Wiener process $W$, there exists
an $\{\mathscr{F}_t\}$-adapted process $X$ such that properties (i)-(iii) in Definition \ref{dew} hold.
\end{definition}

In this part, we suppose that there are some constants $\alpha>1$,  $\beta\geq 2$, and $C,\delta>0$ such that the following conditions hold for a.e. $t\in[0,T]$.

\vspace{1mm}
\begin{enumerate}

\item [$(\mathbf{A_1})$]
 $($Hemicontinuity$)$ For any $u,v,w\in {\mathbb{V}}$, the map
\begin{equation*}
\mathbb{R}_{+}\ni\lambda\mapsto~_{{\mathbb{V}}^*}\langle \mathcal{A}(t,u+\lambda v),w\rangle_{\mathbb{V}}
\end{equation*}
is continuous.

\item [$(\mathbf{A_2})$]\label{H2}
 $($Local Monotonicity$)$ For any $u,v\in {\mathbb{V}}$,
\begin{eqnarray*}
\!\!\!\!\!\!\!\!&&2_{{\mathbb{V}}^*}\langle \mathcal{A}(t,u)-\mathcal{A}(t,v),u-v\rangle_{\mathbb{V}}+\|\mathcal{B}(t,u)-\mathcal{B}(t,v)\|_{L_2(U,{\mathbb{H}})}^2
\nonumber\\
\!\!\!\!\!\!\!\!&&\leq
(C+\rho(u)+\eta(v))\|u-v\|_{\mathbb{H}}^2,
\end{eqnarray*}
where $\rho,\eta:{\mathbb{V}}\to [0,\infty)$ are  measurable functions satisfying
\begin{equation}\label{esq22}
\rho(u)+\eta(u)\leq C(1+\|u\|_{\mathbb{V}}^{\alpha})(1+\|u\|_{\mathbb{H}}^{\beta}),~u\in {\mathbb{V}}.
\end{equation}

\item [$(\mathbf{A_3})$]
 $($Generalized Coercivity$)$ For any $u\in \mathbb{V}$,
\begin{eqnarray*}
2_{\mathbb{V}^*}\langle \mathcal{A}(t,u),u\rangle_{\mathbb{V}}+\delta\|u\|_{\mathbb{V}}^\alpha\leq g(\|u\|_{\mathbb{H}}^2)+C,
\end{eqnarray*}
where $g:[0,\infty)\to[0,\infty)$ is a non-decreasing continuous function.

\item [$(\mathbf{A_4})$]
 $($Growth$)$ For any $u\in \mathbb{V}$,
\begin{equation}\label{conA3}
\|\mathcal{A}(t,u)\|_{{\mathbb{V}}^*}^{\frac{\alpha}{\alpha-1}}\leq C(1+\|u\|_{\mathbb{V}}^{\alpha} )(1+\|u\|_{{\mathbb{H}}}^{\beta}).
\end{equation}

\item [$(\mathbf{A_5})$]  There exists a constant $\eta_0\in(1,\min\{2,\alpha\})$ such that for any $u\in \mathbb{V}$,
\begin{equation}\label{conb1}
g(\|u\|_{\mathbb{H}}^2)+\|\mathcal{B}(t,u)\|_{L_2({U},{\mathbb{H}})}^2
\leq C(1+\|u\|_{\mathbb{H}}^2)+\eta_0\frac{\|\mathcal{B}(t,u)^*u\|_{U}^2}{(1+\|u\|_{\mathbb{H}}^2)},
\end{equation}
where the function $g$ is the same as in $(\mathbf{A_3})$, and for any $u\in \mathbb{V}$,
\begin{equation}\label{conb2}
\|\mathcal{B}(t,u)\|_{L_2({U},{\mathbb{H}})}^2\leq C(1+\|u\|_{{\mathbb{H}}}^{\beta}).
\end{equation}

Moreover, for any sequence $\{u_n\}_{n\in\mathbb{N}}$ and $u$ in $\mathbb{V}$ with $\|u_n-u\|_{\mathbb{H}}\to0$,
\begin{equation}\label{conb3}
\|\mathcal{B}(t,u_n)-\mathcal{B}(t,u)\|_{L_2({U},{\mathbb{H}})}\to 0.
\end{equation}

\end{enumerate}

We present the main result concerning the global existence, uniqueness, and Markov property of (probabilistically) strong solutions to  SPDE (\ref{eqSPDE}).
\begin{theorem}\label{th1}
Suppose that the embedding $ \mathbb{V}\subset \mathbb{H}$ is compact and that $(\mathbf{A_1})$-$(\mathbf{A_5})$ hold.
For any initial data $x\in\mathbb{H}$,
Eq.~(\ref{eqSPDE}) has a unique strong solution in the sense of Definition \ref{de1}. Moreover, we have the estimates
\begin{equation}\label{apri0}
\sup_{t\in[0,T]}\mathbb{E}\|X_t\|_{\mathbb{H}}^{2-\eta_0}+\mathbb{E}\int_0^T\|X_t\|_{\mathbb{V}}^{\alpha-\eta_0}dt<\infty
\end{equation}
and for any $p\geq 2$,
\begin{equation}\label{apri6}
\mathbb{P}\Bigg(\sup_{t\in[0,T]}\|X_t\|_{\mathbb{H}}^p+\int_0^T\|X_t\|_{\mathbb{V}}^{\alpha}dt<\infty\Bigg)=1.
	\end{equation}
Furthermore, if $\mathcal{A}(t,\cdot),\mathcal{B}(t,\cdot)$  are  independent of $t\in[0,T]$, then the solution $(X_t)_{t\in[0,T]}$ of Eq.~(\ref{eqSPDE}) is a time-homogenous Markov process.
\end{theorem}

\begin{remark}\label{b.001}

 The main results are applicable to various new stochastic models that cannot be treated by the classical variational framework established in \cite{LR2,NTT21, RSZ1,W26}, such as  stochastic 3D Navier-Stokes equations and stochastic surface growth models. Although these models fail to satisfy the classical coercivity condition \eref{I04}, they do fit the generalized coercivity condition $(\mathbf{A_3})$ (see Section \ref{exam} for details).

\end{remark}

\begin{remark}\label{rem01a}
The generalized coercivity condition $(\mathbf{A_3})$ was first introduced by the last named author and R\"{o}ckner in \cite{LR13,LR1}, where they proved that Eq.~\eref{eqSPDE} has a unique local solution under this  condition in the case of additive noise (i.e.~$\mathcal{B}(t,u)=\mathcal{B}(t)$). Note that  the main idea of the proof in \cite{LR13,LR1}  is based on a shift transformation to reduce the SPDE \eref{eqSPDE} to a deterministic evolution equation with random coefficients. However,  this strategy cannot handle the more general nonlinear multiplicative noise. More importantly, compared to \cite{LR13,LR1}, we establish the global existence and uniqueness of solutions, rather than merely local ones.

\end{remark}

\begin{remark}\label{b3}
Condition \eref{conb1} in $(\mathbf{A_5})$ plays an essential role in this work, as it balances the interplay between the effective component of the noise in the energy direction  and the growth of nonlinear terms.
This condition can be interpreted as follows:
\begin{enumerate}[(i)]
\item
When $g(x)=C_0x$, corresponding to the classical coercivity condition \eref{I04}, the diffusion coefficient $\mathcal{B}(t,\cdot)$ can exhibit both linear and superlinear growth.

\vspace{1mm}
\item
When $g(x)=C_0x^p$ with $p>1$, the  coefficient $\mathcal{B}(t,\cdot)$  is required to exhibit superlinear growth.
\end{enumerate}
 This condition captures an anisotropic stabilizing mechanism: only the component of the noise acting in the energy direction contributes to the prevention of blow-up, see Remark \ref{remark222} below for concrete applications in fluid mechanics.
\end{remark}

%
%

Based on the existence and uniqueness of solutions to (\ref{eqSPDE}), we intend to investigate the continuous dependence on the initial data in probability (in other word, the well-posedness of Eq.~(\ref{eqSPDE})) and the Feller property of the associated  transition semigroup.

\vspace{1mm}
For any $\varphi\in \mathscr{B}_b(\mathbb{H})$, $t\geq 0$, we define a function $\mathcal{T}_t\varphi:\mathbb{H}\to \mathbb{R}$ by
$$\mathcal{T}_t\varphi(x):=\mathbb{E}\varphi(X_t(x)),~x\in \mathbb{H},$$
where  $X_t(x)$ is the  solution to (\ref{eqSPDE}) with the initial data $x$.
\begin{remark}
Based on Theorem \ref{th1}, it is a direct consequence that $(\mathcal{T}_t)_{t\geq 0}$ is a stochastically continuous Markov semigroup on $\mathscr{B}_b(\mathbb{H})$.
\end{remark}

\begin{theorem}\label{th2}
Suppose that the embedding $ \mathbb{V}\subset \mathbb{H}$ is compact and that $(\mathbf{A_1})$-$(\mathbf{A_5})$ hold. In addition, suppose that for any $u,v\in \mathbb{V}$,  $\mathcal{B}$ is locally Lipschitz in the sense that
\begin{equation}\label{conb4}
\|\mathcal{B}(t,u)-\mathcal{B}(t,v)\|_{L_2({U},{\mathbb{H}})}^2\leq (C+\rho(u)+\eta(v))\|u-v\|^{2}_{{\mathbb{H}}},
\end{equation}
where functions $\rho,\eta$ are the same as in $(\mathbf{A_2})$. Let $\{x_n\}_{n\in\mathbb{N}}$ and $x$ be a sequence with $\|x_n-x\|_{\mathbb{H}}\to 0$.
Then
\begin{equation}\label{apri3}
\|X(x_n)-X(x)\|_{\mathbb{C}_T(\mathbb{H})} \to 0~~\text{in probability as}~~n\to\infty.
\end{equation}
Furthermore, $(\mathcal{T}_t)_{t\geq 0}$ is a Feller semigroup, i.e., $\mathcal{T}_t$ maps $C_b(\mathbb{H})$ into itself.
\end{theorem}

\begin{remark}
Since we have established the Feller property of the Markov semigroup $(\mathcal{T}_t)_{t\geq 0}$ associated with (\ref{eqSPDE}), a natural and  important question arises investigating  the existence of an invariant measure. One commonly employed  technique for establishing the existence of an invariant measure is the Krylov-Bogolyubov procedure. According to  the criterion presented by Maslowski and Seidler \cite{MS}, we need to prove the following estimate: for any $\varepsilon>0$ there exists $R>0$ such that
\begin{equation}\label{ipm}
\sup_{T\geq 1}\frac{1}{T}\int_0^T\mathbb{P}\big(\|X_t(x)\|_{\mathbb{H}}>R\big)dt<\varepsilon.
\end{equation}
However, it is not sufficient to obtain (\ref{ipm}) based on the estimate (\ref{apri}) below.  This topic  deserves further investigation in the future work.

\end{remark}

\subsection{Finite-time extinction} \label{sec.finite}
The phenomenon of finite-time extinction for stochastic fast diffusion equations (SFDEs) driven by linear multiplicative noise has been rigorously established in a series of pioneering works by Barbu, Da Prato, and R\"{o}ckner~\cite{BDR09,BDR09b,BR12,BRR15} and Gess \cite{Gess15}. This phenomenon is of profound physical and mathematical interest, as it provides a rigorous description of the relaxation dynamics characteristic of SOC.

Concerning finite-time extinction, the problem  is generally divided into the following three cases:
\begin{enumerate}[(i)]

\item
Extinction with positive probability for small initial conditions;

\vspace{1mm}
\item
Extinction with positive probability for all initial values;

\vspace{1mm}
\item
Extinction with probability one for all initial values.
\end{enumerate}
While Regimes (i) and (ii) present mathematically intriguing local or conditional behaviors, they do not fully capture the physical mechanism of SOC. In physical systems exhibiting SOC, relaxation to a stable state must be a deterministic event rather than a stochastic occurrence. Consequently, Regime (iii) aligns most closely with the physical paradigm of SOC, as it guarantees that the system always decays to zero in finite time almost surely.

The primary objective of this subsection is to establish a unified and general framework to investigate the almost sure finite-time extinction (i.e., Regime (iii)) of solutions to  SPDEs \eqref{eqSPDE} under the influence of potentially linear or nonlinear multiplicative noise. Specifically, one may anticipate that a structurally suitable  noise will eventually dominate the underlying deterministic nonlinearities, thereby accelerating the decay of the system and driving it to extinction within a finite time horizon.

To explore this finite-time extinction, we  suppose that there exist constants $\alpha\in(1,2)$ and $\delta>0$ such that the following conditions hold for a.e.~$t\in[0,T]$.

\vspace{1mm}
\begin{enumerate}
\item [$(\mathbf{A_3^*})$]
 $($Enhanced Coercivity$)$ For any $u\in \mathbb{V}$,
\begin{eqnarray*}
2_{\mathbb{V}^*}\langle \mathcal{A}(t,u),u\rangle_{\mathbb{V}}+\delta\|u\|_{\mathbb{V}}^\alpha\leq g(\|u\|_{\mathbb{H}}^2),
\end{eqnarray*}
where the function $g$ is the same as in  $(\mathbf{A_3})$ with $g(0)=0$.

\item [$(\mathbf{A_5^*})$]  For any $u\in \mathbb{V}$,
\begin{equation*}\label{conb5}
\|\mathcal{B}(t,u)\|_{L_2(U,\mathbb{H})}=0~~\text{if}~~\|u\|_{\mathbb{H}}=0,
\end{equation*}
and
\begin{equation}\label{conb6}
\big(g(\|u\|_{\mathbb{H}}^2)+\|\mathcal{B}(t,u)\|_{L_2(U,\mathbb{H})}^2\big)\|u\|_{\mathbb{H}}^2\leq \alpha\|\mathcal{B}(t,u)^*u\|_{U}^2.
\end{equation}

\end{enumerate}

\begin{remark}
Note that Assumption (\ref{conb6}) is not comparable with Assumption (\ref{conb1}). However, by choosing $g(x)=C_0x^p$, it is easy to select a suitable diffusion coefficient   satisfying both assumptions simultaneously, see Subsection \ref{example lap} for details.
\end{remark}

Let $\tau_{e}$ be the following extinction time
$$\tau_{e}:=\inf\big\{t\geq 0:\|X_t\|_{\mathbb{H}}=0\big\},$$
where $(X_t)_{t\geq 0}$ is the solution to (\ref{eqSPDE}) given by Theorem \ref{th1} with initial value $x\in\mathbb{H}$.

\begin{theorem}\label{th3}
Suppose that the embedding $ \mathbb{V}\subset \mathbb{H}$ is compact and that $(\mathbf{A_1})$, $(\mathbf{A_2})$, $(\mathbf{A_3^*})$, $(\mathbf{A_4})$, $(\mathbf{A_5})$, and $(\mathbf{A_5^*})$ hold.
Let the initial data $x\in\mathbb{H}$.
Then for any $t\geq \tau_{e}$,
\begin{equation}\label{fte1}
\|X_t\|_{\mathbb{H}}=0,~\mathbb{P}\text{-a.s.}.
\end{equation}
Furthermore, there exists a constant $c_0>0$ such that for any $T>0$,
\begin{equation}\label{fte2}
\mathbb{P}(\tau_{e}>T)
\leq\frac{c_0\|x\|_{\mathbb{H}}^{2-\alpha}}{T}.
\end{equation}
Moreover, we also have the moment estimate of the extinction time
\begin{equation}\label{fte3}
\mathbb{E}\tau_{e}\leq c_0 \|x\|_{\mathbb{H}}^{2-\alpha}.
\end{equation}

\end{theorem}

\begin{remark}
(i) Compared with the existing works \cite{BDR09,BDR09b,BR12,BRR15}, the above theorem provides quantitative estimates for the extinction time, which derive the almost sure finite-time extinction (i.e., Regime (iii)) of solutions to  SPDEs \eqref{eqSPDE}.  Furthermore, the constant $c_0$ has an explicit  representation depending on the embedding constant of  $\mathbb{V}\subset\mathbb{H}$ (see Subsection \ref{sec4.6} for details).

As a result, while previous studies (e.g.~\cite{BDR09,BDR09b,BR12,BRR15,Gess15}) were limited to linear multiplicative noise,  we establish, for the first time, finite-time extinction for a class of quasilinear SPDEs driven by nonlinear multiplicative noise, encompassing stochastic fast diffusion equations and stochastic singular
$p$-Laplace equations (with source terms).

\vspace{1mm}
(ii)  In a forthcoming work, we aim to investigate finite-time extinction for SOC models (e.g., the BTW model and the Zhang model) perturbed by nonlinear  multiplicative noise, which describe dynamical systems possessing a critical point as an attractor.
\end{remark}

\section{Examples/Applications}\label{exam}

In this section, we will denote by $\Lambda\subseteq\mathbb{R}^d$ ($d\geq 1$) an open bounded domain with a smooth boundary.  Let
$C_0^\infty(\Lambda; \mathbb{R}^d)$ be the space of all infinitely differentiable functions from $\Lambda$ to $\mathbb{R}^d$ with compact support.  For $p\ge 1$, let $L^p(\Lambda; \mathbb{R}^d)$ denote the vector valued $L^p$-space with the norm $\|\cdot\|_{L^p}$.
For each integer $m\geq0$, we use $W_0^{m,p}(\Lambda; \mathbb{R}^d)$ to denote the classical Sobolev space defined on $\Lambda$
taking values in $\mathbb{R}^d$ with the equivalent norm:
$$ \|u\|_{W^{m,p}} := \left(  \int_{\Lambda} |D^m u(\xi)|^pd \xi \right)^\frac{1}{p}.$$
In particular, we denote
$$ \|u\|_m:=\|u\|_{W^{m,2}}.$$

We also recall the standard Gagliardo-Nirenberg interpolation inequality (cf.~\cite{Ni59}) for the reader's convenience.
If for any $1\leq q,r<\infty$, and $0\leq n<m$ satisfying
$$
\frac{1}{p}=\frac{n}{d}+\theta(\frac{1}{r}-\frac{m}{d})+(1-\theta)\frac{1}{q},\ \frac{n}{m}\le\theta<1,
$$
then there is a constant $C>0$ such that
\begin{equation}
\|u\|_{W^{n,p}}\le C\|u\|_{W^{m,r}}^{\theta}\|u\|_{L^{q}}^{1-\theta},\ \ u\in L^q(\Lambda; \mathbb{R}^d)\cap W^{m,2}(\Lambda; \mathbb{R}^d).\label{GN_inequality}
\end{equation}

%
%
%
%

We now apply our general framework to various SPDE models. To this end, we specify the conditions on the diffusion coefficient $\mathcal{B}$ with respect to a given Gelfand triple \eqref{gel} and a non-decreasing continuous function $g$.

\vspace{1mm}
Specifically, we assume there exist constants   $C>0,\beta\geq 2$, and $\eta_0,\alpha\in(1,2)$ such that
\begin{enumerate}
\item [$(\mathbf{A}^1_{\mathcal{B}})$]
 For any $u,v\in {\mathbb{V}}$,
	\begin{equation*}
		\|\mathcal{B}(t,u)-\mathcal{B}(t,v)\|_{L_2(U,{\mathbb{H}})}^2\leq
		(C+\|u\|_{\mathbb{H}}^{\beta}+\|v\|_{\mathbb{H}}^{\beta})\|u-v\|_{\mathbb{H}}^2
	\end{equation*}
and
\begin{equation*}
	\|\mathcal{B}(t,u)\|_{L_2({U},{\mathbb{H}})}^2\leq C(1+\|u\|_{{\mathbb{H}}}^{\beta}).
\end{equation*}

\item [$(\mathbf{A}^2_{\mathcal{B}})$]  For any $u\in \mathbb{V}$,
\begin{equation*}
	g(\|u\|_{\mathbb{H}}^2)+\|\mathcal{B}(t,u)\|_{L_2({U},{\mathbb{H}})}^2
	\leq C(1+\|u\|_{\mathbb{H}}^2)+\eta_0\frac{\|\mathcal{B}(t,u)^*u\|_{U}^2}{(1+\|u\|_{\mathbb{H}}^2)}.
\end{equation*}

\item [$(\mathbf{A}^{2*}_{\mathcal{B}})$]  For any $u\in \mathbb{V}$,
\begin{equation*}
	\|\mathcal{B}(t,u)\|_{L_2(U,\mathbb{H})}=0~~\text{if}~~\|u\|_{\mathbb{H}}=0,
\end{equation*}
and
\begin{equation*}
	\big(g(\|u\|_{\mathbb{H}}^2)+\|\mathcal{B}(t,u)\|_{L_2(U,\mathbb{H})}^2\big)\|u\|_{\mathbb{H}}^2\leq \alpha\|\mathcal{B}(t,u)^*u\|_{U}^2.
\end{equation*}

\end{enumerate}

As discussed in Remark \ref{b.001}, a typical choice of function  $g$ in infinite-dimensional applications is
$$g(x)=C_0x^p~~\text{with}~~ p>1,C_0>0.$$
 Below, we present concrete examples of the diffusion coefficient $\mathcal{B}$ that satisfy conditions $(\mathbf{A}^1_{\mathcal{B}})$-$(\mathbf{A}^{2*}_{\mathcal{B}})$.

\vspace{1mm}
\noindent\textbf{Example.}
The diffusion coefficient can be constructed as follows:
	
	\vspace{1mm}
	(i) For $y\in U$, we define
	\begin{equation}\label{exaB}
		\mathcal{B}(t,u)y:=\sum_{k=1}^{\infty}b_k\|u\|_\mathbb H^m u\langle y,g_k\rangle_{U},
	\end{equation}
	where  $\{g_1,g_2,\cdots\}$ is an orthonormal basis of $U$, and the constants $b_k$, $k\geq 1$, satisfy
	$$\gamma:=\sum_{k=1}^{\infty}b_k^2<\infty.$$
	For $m=p-1$, we  assume that $\gamma$ is a large constant depending on $C_0$. For $m>p-1$, we only need to assume that there is $k\geq 1$ such that $b_k\neq0$.
	
	\vspace{1mm}
For the reader's convenience, we provide a detailed proof showing that Example \ref{exaB} satisfies condition $(\mathbf{A}^2_{\mathcal{B}})$ when $m=p-1$.
	
	\begin{proof}
  By the definition of the adjoint operator $\mathcal{B}(t,u)^*$, we have
		\begin{eqnarray*}
			\|\mathcal{B}(t,u)^*u\|_{U}^2=\!\!\!\!\!\!\!\!&&\langle\mathcal{B}(t,u)^*u,\mathcal{B}(t,u)^*u\rangle_{U}   \\
			=\!\!\!\!\!\!\!\!&&\langle u,\mathcal{B}(t,u)\mathcal{B}(t,u)^*u\rangle_{\mathbb{H}}   \\
			=\!\!\!\!\!\!\!\!&&\sum_{k=1}^{\infty}b_k\|u\|_\mathbb H^{p-1}\langle u, u\rangle_{\mathbb{H}}\langle \mathcal{B}(t,u)^*u,g_k\rangle_{U}   \\
			=\!\!\!\!\!\!\!\!&&\sum_{k=1}^{\infty}b_k\|u\|_\mathbb H^{p+1}\langle u,\sum_{j=1}^{\infty}b_j\|u\|_\mathbb H^{p-1} u\langle g_j,g_k\rangle_{U}\rangle_{\mathbb{H}}   \\
			=\!\!\!\!\!\!\!\!&&\gamma\|u\|_\mathbb H^{2p+2}.
		\end{eqnarray*}
		For a large constant $\gamma$, we can obtain that there exists $\eta_0\in(1,2)$,
		\begin{eqnarray*}
			\big(g(\|u\|_{\mathbb{H}}^2)+\|\mathcal{B}(t,u)\|_{L_2({U},{\mathbb{H}})}^2\big)(1+\|u\|_{\mathbb{H}}^2)\leq\!\!\!\!\!\!\!\!&&(C_0\|u\|_{\mathbb{H}}^{2p}+\gamma\|u\|_\mathbb H^{2p})(1+\|u\|_{\mathbb{H}}^2) \\
			\leq\!\!\!\!\!\!\!\!&&\eta_0\gamma\|u\|_\mathbb H^{2p+2}+C,
		\end{eqnarray*}
		which implies that the condition (\ref{conb1}) in $(\mathbf{A_5})$ holds.
	\end{proof}

	(ii) In particular, this setup applies to the case of a one-dimensional Wiener process.
For $u\in \mathbb{H}$ and $y\in U$, choosing $b_k = c \delta_{1,k}$ with a constant $c \neq 0$ yields
	\begin{equation}
		\mathcal{B}(t,u)y:=c\|u\|_\mathbb H^mu\langle y,g_1\rangle_U.
	\end{equation}
 Similarly, for $m=p-1$, we assume that $c$ is a large constant depending on $C_0$. For any $m>p-1$, we can directly take $c=1$.

\begin{remark}\label{b.02}	
The above example  represents a class of nonlocal forcing terms that characterize the spatial dependence of systems arising in engineering, biology, and fluid mechanics. Such nonlocal mechanisms have been widely investigated in the study of deterministic PDEs, including non-degenerate Kirchhoff-type forcing in wave equations (cf.~\cite{K83}) and Woinowsky-Krieger nonlocal forcing for extensible beams (cf.~\cite{WK50} or \cite[Section 6]{KS18}). We  refer the reader to Remark \ref{remark222} for a more detailed discussion within mathematical fluid mechanics.
\end{remark}

\subsection{Stochastic 3D Navier-Stokes equations}\label{example 1}
3D Navier-Stokes system is a classical model to describe the time evolution of an incompressible fluid (cf.~\cite{Le34,Tem95,ZZ21}), which is given as follows
\begin{eqnarray}\label{NS01}
\left\{
  \begin{aligned}
 &\partial_t{u}=\nu\Delta{u}-(u\cdot{\nabla})u-\nabla{p}+f, \\
    &\text{div}(u)=0, u|_{\partial\Lambda}=0, u(0)=u_0,
  \end{aligned}
\right.
\end{eqnarray}
where  $u$ represents the velocity field of the fluid, $\nu>0$ is the viscosity constant,
$p$ denotes the pressure, and $f$ is an external force field acting on the fluid.


Define
$$H^m:=\Big\{v\in W_0^{m,2}(\Lambda;\mathbb{R}^3):\text{div}(v)=0\Big\}.$$
Identifying $H^1$ with its dual space by the Riesz isomorphism, then we will use the following Gelfand triple
\begin{equation} \label{gel1}
\mathbb V:=H^2\subset \mathbb H:=H^1\subset {\mathbb V}^*.
 \end{equation}

Let $\mathcal{P}_{\mathbb H}$ be the orthogonal projection operator on $L^{2}(\Lambda;\mathbb{R}^{3})$ onto $H^0$, which is called the Leray-Helmholtz
projection.  Then, the
classical 3D Navier-Stokes system \eref{NS01} can be reformulated in the following abstract form
\begin{equation}
\partial_t u=Au+B(u)+F,~~~u(0)=u_0,\label{NS02}
\end{equation}
where operators
$$A:\mathbb V\rightarrow {\mathbb V}^*,~~~Au:=\nu P_{\mathbb H}\Delta{u},$$
$$B:\mathbb V\times \mathbb V\rightarrow {\mathbb V}^*,~~~B(u,v):=P_{\mathbb H}[(u\cdot{\nabla})v],~~B(u):=B(u,u)$$
$$F:[0,T]\rightarrow H^0,~~~F_t=\mathcal{P}_{\mathbb H}f_t$$
are well-defined.

Over the past few decades, the 3D Navier-Stokes equations have been intensively studied in the literature. However, their global well-posedness remains a challenging open problem. Motivated by this, we are interested in the following stochastic 3D Navier-Stokes equations:
\begin{eqnarray}\label{NS03}
d{X_t}=\big[AX_t+B(X_t)\big]dt+\mathcal{B}(t,X_t)dW_t,~X_0=x,
\end{eqnarray}
where $\{W_t\}_{t\in [0,T]}$ is an $U$-valued cylindrical Wiener process with $U$ being a separable Hilbert space, and the diffusion coefficient $\mathcal{B}$ satisfies  $(\mathbf{A}^1_{\mathcal{B}})$ and $(\mathbf{A}^2_{\mathcal{B}})$  with $g(x)=Cx^3$ for a large constant $C$.

\begin{theorem}\label{thns}
 For any  initial data $x\in H^1$, Eq.~(\ref{NS03}) has a unique strong solution. Moreover,  we have the  estimate
\begin{equation*}
\sup_{t\in[0,T]}\mathbb{E}\|X_t\|_{1}^{2-\eta_0}+\mathbb{E}\int_0^T\|X_t\|_{2}^{2-\eta_0}dt<\infty.
\end{equation*}
Furthermore, let $X_t(x)$ be the unique solution to Eq.~(\ref{NS03}) with the initial data $x$, $\{x_n\}_{n\in\mathbb{N}}$ and $x$ be a sequence with $\|x_n-x\|_{1}\to 0$,
then
\begin{equation*}
\|X(x_n)-X(x)\|_{\mathbb{C}_T(H^1)} \to 0~~\text{in probability as}~~n\to\infty.
\end{equation*}
In particular, the corresponding Markov semigroup $(\mathcal{T}_t)_{t\geq 0}$ is a Feller semigroup in $C_b(H^1)$.
\end{theorem}

\begin{proof}
By Theorems \ref{th1} and \ref{th2}, we only need to verify that the conditions $(\mathbf{A_1})$-$(\mathbf{A_4})$  hold for Eq.~(\ref{NS03}).

Let
$$\mathcal{A}(t,u):=Au+B(u).$$
The proof of the conditions $(\mathbf{A_1})$-$(\mathbf{A_4})$  for $\mathcal{A}$ is similar to that of Example 5.2.23 in \cite{LR1}, we include it here for completeness.

\vspace{1mm}
The hemicontinuity $(\mathbf{A_1})$ is obvious since $B$ is a bilinear map.

\vspace{1mm}
By the Agmon inequality (see e.g.~(A.29) in~\cite{FMRT}),
\begin{eqnarray}
\|u\|_{L^\infty}\leq C\|u\|_1^{\frac{1}{2}}\|u\|_2^{\frac{1}{2}},~u\in H^2,\label{bb5}
\end{eqnarray}
 we have
\begin{eqnarray*}
\!\!\!\!\!\!\!\!&& {}_{\mathbb{V}^*}\<B(u)-B(v),u-v\>_{\mathbb{V}} \\
 = \!\!\!\!\!\!\!\!&&\<B(u)-B(v), (-\Delta)(u-v)\>_{L^2}  \\
\le\!\!\!\!\!\!\!\!&& \|u-v\|_{2}  \|(u\cdot \nabla)u - (v\cdot \nabla) v\|_{L^2}  \\
\le\!\!\!\!\!\!\!\!&&    \|u-v\|_{2} \left( \|u\|_{L^\infty} \|\nabla u- \nabla v  \|_{L^2} +\|u-v\|_{L^\infty} \|\nabla v\|_{L^2} \right)  \\
\le\!\!\!\!\!\!\!\!&&   \|u-v\|_{2}  \left( \|u\|_{L^\infty} \| u-v  \|_{1} +C\|u-v\|_{2}^{1/2} \|u-v\|_{1}^{1/2} \| v\|_{1} \right)  \\
\le\!\!\!\!\!\!\!\!&&    \frac{\nu}{2} \|u-v\|_{2}^{2} + C \left( \|u\|_{2}\|u\|_{1} +\|v\|_{1}^4 \right)   \|u-v\|_{1}^{2},
\   u, v\in {\mathbb{V}},
 \end{eqnarray*}
then
\begin{equation*}
 \begin{split}
  &   {}_{\mathbb{V}^*}\<Au+B(u)-Av-B(v),u-v\>_{\mathbb{V}}\\
\le& -\frac{\nu}{2} \|u-v\|_{2}^{2}
  +C \left(  \|u\|_{2}\|u\|_{1} +\|v\|_{1}^4 \right)   \|u-v\|_{1}^{2},\ u,v\in {\mathbb{V}},
 \end{split}
\end{equation*}
which implies the local monotonicity $(\mathbf{A_2})$ holds with  $\alpha=2,\beta=m$ and
$$\rho(u):=C\|u\|_{2}\|u\|_{1},
~~\eta(v)=C\|v\|_{1}^{4}.$$
In particular, there exists a constant $C>0$ such that
$$   {}_{\mathbb{V}^*}\<Au+B(u), u\>_{\mathbb{V}}
\le -\frac{\nu}{2} \|u\|_{2}^{2} +  C (1+\|u\|_{1}^6),\   u\in {\mathbb{V}}. $$
Then,   $(\mathbf{A_3})$ holds with $g(x)=C x^3$, i.e.,
\begin{equation*}
   {}_{\mathbb{V}^*}\<\mathcal{A}(t,u), v\>_{\mathbb{V}}\le -\frac{\nu}{2} \|u\|_{2}^2+ g(\|u\|_{1}^2)+C,\ u\in {\mathbb{V}}.
\end{equation*}
Concerning the growth condition, by (\ref{bb5}) we have
\begin{equation*}
  \|B(u)\|_{\mathbb{V}^*}^2\le \|(u\cdot \nabla)u\|_{L^2}^2\le \|u\|_{L^\infty}^2 \|\nabla u\|_{L^2}^2
\le C \|u\|_{2}\|u\|_{1}^3 \le C \|u\|_{2}^2 \|u\|_{1}^2,  \ u\in {\mathbb{V}}.
\end{equation*}
Hence,  $(\mathbf{A_4})$ holds.
 The proof is completed.
\end{proof}

\begin{remark}\label{remark222}
As discussed in Remark \ref{b.02}, the main results established in  Theorem \ref{thns} are applicable to the following class of nonlocal stochastic perturbations
\begin{equation}\label{exaB1}
	\int_0^t\mathcal{B}(s, u_s) dW_s:=\sum_{k=1}^{\infty}\int_0^t b_k u_s\int_{\Lambda}|\nabla u_s(\xi)|^2d\xi  d\beta_k(s),
\end{equation}
where  $\gamma:=\sum_{k=1}^{\infty}b_k^2$ is a large constant, $\{\beta_k(t)\}_{k\in\mathbb{N}}$ is a sequence of independent one-dimensional standard Wiener processes. Physically, the  nonlocal stochastic forcing $(\ref{exaB1})$  characterizes spatially dependent random perturbations whose intensity is modulated by the total energy dissipation of the system.

Such integral-type nonlocal operators plays an important role  in mathematical fluid mechanics. In the celebrated paper \cite{Lad67}, Ladyzenskaja introduced Navier-Stokes equations with this class of deterministic nonlocal viscosity forces, which is related to the total dissipation energy of Newtonian fluid.  Moreover,  as noted in  \cite{CR05}, this type of nonlocal viscosity allows one to interpret the system as the asymptotic limit of certain nonisothermal flows characterized by high thermal conductivity.

\end{remark}

\begin{remark}
(i)	Although stochastic Navier-Stokes equations have been intensively studied over the past few decades (cf.~e.g.~\cite{F15b,O07,RZ09}), the global well-posedness of the stochastic 3D Navier-Stokes equations has not yet been fully resolved. Applying our general framework, we establish that the stochastic 3D Navier-Stokes system \eref{NS03}  driven by a class of nonlinear multiplicative noises is globally well-posed for  general $H^1$ initial data.

(ii)	  It is well established that the 2D Navier-Stokes equations fit the classical local monotonicity framework (cf.~\cite{LR2}) based on the Gelfand triple
	\begin{equation*}
		\mathbb V:=H^1 \subset \mathbb H:= H^0 \subset  \mathbb V^* .
	\end{equation*}
However, this Gelfand triple does not apply to the 3D setting. To address this issue, we introduce an alternative Gelfand triple \eref{gel1} in Theorem \ref{thns}, which serves as the appropriate framework.
\end{remark}

\subsection{Stochastic surface growth models}\label{example 3}
 In this subsection, we apply our general framework
to study the following 1D surface growth model
\begin{eqnarray}\label{surface growth PDE01}
	\left\{
	\begin{aligned}
		&\partial_t{u}= -\partial_x^4 u-\partial_x^2 u+\partial_x^2(\partial_x u)^2, \\
		&u|_{\partial\Lambda}=0, u(0)=u_0,
	\end{aligned}
	\right.
\end{eqnarray}
where $\partial_x, \partial_x^2, \partial_x^4$ denote the first, second and fourth spatial derivatives, respectively. This model appears in the theory of growth of surfaces,
which describes an amorphous material deposited on an initially flat surface in high vacuum (cf.~\cite{RLH}).

Owing to the highly nonlinear nature of model \eref{surface growth PDE01}, global well-posedness for the 1D surface growth model has not yet been resolved in either the deterministic or stochastic settings. In particular, this model is known to share comparable difficulties with the 3D Navier–Stokes equations.

Taking random noise into account, the equation is formulated  as follows
\begin{eqnarray}\label{surface growth PDE}
\left\{
  \begin{aligned}
 &d X_t=\left[ -\partial_x^4 X_t-\partial_x^2 X_t+\partial_x^2(\partial_x X_t)^2 \right] d t +\mathcal{B}(t,X_t)dW_t,\\
    &X_t|_{\partial\Lambda}=0,
  \end{aligned}
\right.
\end{eqnarray}
where the diffusion coefficient $\mathcal{B}$ satisfies  $(\mathbf{A}^1_{\mathcal{B}})$ and $(\mathbf{A}^2_{\mathcal{B}})$  with $g(x)=Cx^3$ for a large constant $C$.

We take the following Gelfand triple to obtain the global well-posedness of Eq.~(\ref{surface growth PDE}), i.e.,
$$  \mathbb V:=W_0^{4,2}(\Lambda;\mathbb{R}) \subset \mathbb H:= W_0^{2,2}(\Lambda;\mathbb{R}) \subset {\mathbb V}^*.   $$

\begin{theorem}\label{thsurface}
 For any  initial data $x\in W_0^{2,2}$, Eq.~(\ref{surface growth PDE}) has a unique strong solution. Moreover,  we have the  estimate
\begin{equation*}
\sup_{t\in[0,T]}\mathbb{E}\|X_t\|_{2}^{2-\eta_0}+\mathbb{E}\int_0^T\|X_t\|_{4}^{2-\eta_0}dt<\infty.
\end{equation*}
Furthermore, let $X_t(x)$ be the unique solution to Eq.~(\ref{surface growth PDE}) with the initial data $x$, $\{x_n\}_{n\in\mathbb{N}}$ and $x$ be a sequence with $\|x_n-x\|_{2}\to 0$,
then
\begin{equation*}
\|X(x_n)-X(x)\|_{\mathbb{C}_T(W_0^{2,2})} \to 0~~\text{in probability as}~~n\to\infty.
\end{equation*}
In particular, the corresponding Markov semigroup $(\mathcal{T}_t)_{t\geq 0}$ is a Feller semigroup in $C_b(W_0^{2,2})$.
\end{theorem}

\begin{proof}
The conditions $(\mathbf{A_1})$-$(\mathbf{A_5})$ hold for Eq.~\eref{surface growth PDE} with  $g(x)=C x^3$, whose proof follows from Section 3.4 in \cite{LR13}, we omit the details.
\end{proof}

\begin{remark}
The solution obtained here for the stochastic surface growth model  is a global strong solution in both  the PDE and probability senses. We note that the local existence of solutions for this model has been established in \cite{LR13,RZZ14}. To the best of our knowledge, Theorem \ref{thsurface} provides the first global well-posedness result for the stochastic surface growth model.
\end{remark}

Our general framework is not restricted to the aforementioned semi-linear SPDEs; it also encompasses a wide class of quasi-linear stochastic dynamics, including stochastic $p$-Laplace equations (with heat sources) and stochastic fast-diffusion equations.
\subsection{Stochastic singular $p$-Laplace equations with heat sources}\label{example lap}
 In this subsection, we investigate  the finite-time extinction of solutions to stochastic 1D singular $p$-Laplace equations with nonlinear sources. Specifically, we consider the following deterministic equation
\begin{eqnarray}\label{p-lap01}
	\left\{
	\begin{aligned}
		&\partial_t{u}= \text{div}(|\nabla u|^{p-2}\nabla u)+u^{2}, \\
		&u|_{\partial\Lambda}=0, u(0)=u_0,
	\end{aligned}
	\right.
\end{eqnarray}
where $1 < p < 2$. This model arises in combustion theory, where $u$ represents the temperature, the term $\operatorname{div}(|\nabla u|^{p-2}\nabla u)$ denotes thermal diffusion, and the nonlinear source $u^{2}$ is physically referred to as a ``hot source'' (cf.~e.g.~\cite{CY07,Nguyen,TY08}).

It is well known that solutions to \eqref{p-lap01} may blow up in finite time under certain initial conditions. Thus, global solutions do not exist in general (see, e.g.,~\cite{LC03,Nguyen}). Taking random noise into account, we study the following stochastic singular $p$-Laplace equation
\begin{eqnarray}\label{Plap02}
	\left\{
	\begin{aligned}
		& d X_t=[\text{div}(|\nabla X_t|^{p-2}\nabla X_t)+X_t^{2}]dt
		+\mathcal{B}(t,X_t)dW_t,\\
		&X_t|_{\partial\Lambda}=0,
	\end{aligned}
	\right.
\end{eqnarray}
where the diffusion coefficient $\mathcal{B}$ satisfies  $(\mathbf{A}^1_{\mathcal{B}})$, $(\mathbf{A}^2_{\mathcal{B}})$,  and $(\mathbf{A}^{2*}_{\mathcal{B}})$  with $g(x)=Cx^{\frac{4p-3}{3p-3}}$ for a large constant $C$.

To analyze \eqref{Plap02}, we employ the Gelfand triple
$$
\mathbb{V}:=W_0^{1,p}(\Lambda;\mathbb{R}) \subset \mathbb{H}:=L^{2}(\Lambda;\mathbb{R})\subset \mathbb{V}^{*}.
$$

\begin{theorem}\label{main result PLap1}
  For any  initial data $x\in L^2$, Eq.~(\ref{Plap02}) has a unique strong solution. Moreover,  we have the  estimate
	\begin{equation*}
	\sup_{t\in[0,T]}\mathbb{E}\|X_t\|_{L^2}^{2-\eta_0}+\mathbb{E}\int_0^T\|X_t\|_{W^{1,p}}^{2-\eta_0}dt<\infty.
	\end{equation*}
	Furthermore, let $\{x_n\}_{n\in\mathbb{N}}$ and $x$ be a sequence with $\|x_n-x\|_{L^2}\to 0$. Let $X_t(x)$ be the unique solution to Eq.~(\ref{Plap02}) with the initial data $x$.
	Then
	\begin{equation*}
		\|X(x_n)-X(x)\|_{\mathbb{C}_T(L^2)} \to 0~~\text{in probability as}~~n\to\infty.
	\end{equation*}
	In particular, the corresponding Markov semigroup $(\mathcal{T}_t)_{t\geq 0}$ is a Feller semigroup in $C_b(L^2)$.
\end{theorem}

\begin{proof}
	Let
	$$\mathcal{A}(t,u):=\text{div}(|\nabla u|^{p-2}\nabla u)+u^{2}.$$
	For any $u,v\in {\mathbb{V}}$, by Young's inequality we have
	\begin{eqnarray*}
		{}_{\mathbb{V}^*}\<u^2-v^2,u-v\>_{\mathbb{V}}
		=\!\!\!\!\!\!\!\!&&  \<u^2-v^2, u-v\>_{L^2}  \\
		\le\!\!\!\!\!\!\!\!&&  C\int_{\Lambda}|u|(u-v)^2dx+C\int_{\Lambda}|v|(u-v)^2dx  \\
		\le\!\!\!\!\!\!\!\!&&  C(\|u\|_{L^{\infty}}+\|v\|_{L^{\infty}}) \|u-v\|_{L^2}^2\\
		\le\!\!\!\!\!\!\!\!&&  C(\|u\|_{W^{1,p}}+\|v\|_{W^{1,p}}) \|u-v\|_{L^2}^2,
	\end{eqnarray*}
	where we have used the Sobolev embedding $W^{1,p}_0(\Lambda;\mathbb{R})\subset L^\infty(\Lambda;\mathbb{R})$ for $p>d=1$.
	
	Then
	\begin{equation*}
		{}_{\mathbb{V}^*}\<\mathcal{A}(t,u)-\mathcal{A}(t,v),u-v\>_{\mathbb{V}}
		\leq C\left(1+ \|u\|_{W^{1,p}}^{p} +\| v\|_{W^{1,p}}^{p} \right)\| u-v\|_{L^2} ^{2},
	\end{equation*}
	which implies the local monotonicity $(\mathbf{A_2})$ holds with $\alpha=p$ and
	$$\rho(u)=\eta(u):=\|u\|_{W^{1,p}}^{p}.$$
	
	By interpolation inequality \eref{GN_inequality} and Young's inequality, for any $u\in {\mathbb{V}}$ there exists constants $C>0$ such that
	\begin{equation}\label{ec01}
		\begin{split}
			{}_{\mathbb{V}^*}\<\mathcal{A}(u),u\>_{\mathbb{V}}=& -\|u\|_{W^{1,p}}^{p}+\| u\|_{L^3}^{3}\\
			\le&  -\|u\|_{W^{1,p}}^{p}+C\|u\|_{W^{1,p}}^{3\theta}\|u\|_{L^2}^{3(1-\theta)}\\
			\le&  -\frac{1}{2}\|u\|_{W^{1,p}}^{p}+C\|u\|_{L^2}^{\frac{8p-6}{3p-3}},
		\end{split}
	\end{equation}
	where $\theta=\frac{p}{9p-6}\in(0,1)$. Thus, we can see that the condition $(\mathbf{A_3})$ holds with $g(x)=Cx^{\frac{4p-3}{3p-3}}$.
	
	For any $u,v\in {\mathbb{V}}$,
	\begin{equation*}
		\begin{split}
			{}_{\mathbb{V}^*}\<\mathcal{A}(t,u),v\>_{\mathbb{V}}=& -\int_\Lambda|\nabla u|^{p-2}\nabla u\cdot\nabla vdx+\int_\Lambda u^2\cdot vdx\\
			\le& \left(\int_\Lambda|\nabla u|^{p}dx\right)^{\frac{p-1}{p}}\left(\int_\Lambda|\nabla v|^{p}dx\right)^{\frac{1}{p}}+C\|v\|_{L^\infty}\left(\int_\Lambda|u|^{2}dx\right)\\
			\le& ~\|u\|_{W^{1,p}}^{p-1}\|v\|_{W^{1,p}}+C\|u\|_{L^2}^2\|v\|_{W^{1,p}}.
		\end{split}
	\end{equation*}
	Therefore, the growth condition $(\mathbf{A_4})$ holds, namely for any $u\in {\mathbb{V}}$,
	$$\|\mathcal{A}(t,u)\|_{\mathbb{V}^*}^{\frac{p}{p-1}}\leq C(\|u\|_{W^{1,p}}^p+\|u\|_{L^2}^{\frac{2p}{p-1}}).$$
	Finally, it is easy to show that $(\mathbf{A_1})$ holds. We complete the proof by applying Theorems \ref{th1} and \ref{th2}.
\end{proof}

Let $\tau_{e}$ be the extinction time  defined by
\begin{equation}\label{extime}
	\tau_{e}:=\inf\big\{t\geq 0:\|X_t\|_{L^2}=0\big\},
\end{equation}
where $(X_t)_{t\geq 0}$ is the solution to \eref{Plap02} with initial value $X_0 = x\in L^2$.  We establish the following finite-time extinction result for \eqref{Plap02}.

\begin{theorem}\label{main result PLap2}
For any initial data $x\in L^2$, we have
	\begin{equation*}
		\|X_t\|_{L^2}=0\quad \text{for all } t\geq \tau_{e}, \ \mathbb{P}\text{-a.s.}
	\end{equation*}
Furthermore, there exists a constant $c_0>0$ such that for any $T>0$,
\begin{equation*}
\mathbb{P}(\tau_{e}>T)
\leq\frac{c_0\|x\|_{L^2}^{2-p}}{T}.
\end{equation*}
Moreover, we also have the moment estimate of the extinction time
\begin{equation*}
\mathbb{E}\tau_{e}\leq c_0 \|x\|_{L^2}^{2-p}.
\end{equation*}
\end{theorem}
\begin{proof}
	It suffices to verify condition  $(\mathbf{A_3^*})$, which follows directly   from  \eref{ec01}.
\end{proof}
\begin{remark}
	Note that solutions of the deterministic singular $p$-Laplace equation \eqref{p-lap01} may blow up in finite time. We show that the introduction of appropriate random noise can induce almost sure finite-time extinction for the stochastic dynamics. This is a novel phenomenon that illustrates the effect of random noise on the long-time behavior of singular $p$-Laplace equations.
\end{remark}

\subsection{Stochastic fast diffusion equations}\label{example pm}
The fast diffusion equation arises in the description of a wide variety of physical phenomena and processes, including fluid flows in porous media, diffusion processes in kinetic gas theory, heat transfer in plasmas, and population dynamics (cf.~\cite{BR12,Gess15}).

In this subsection, we study SFDEs. We assume either $d \in \{1, 2\}$ with $r \in (0, 1)$, or $d \geq 3$ with $r \in \left[\frac{d-2}{d+2}, 1\right)$. Specifically, we consider
\begin{eqnarray}\label{PME}
\left\{\begin{aligned}
	 &dX_t=\Delta (|X_t|^{r-1}X_t)dt+\mathcal{B}(t,X_t)dW_t,\\
	   &X_t|_{\partial\Lambda}=0,
	     \end{aligned}
\right.
\end{eqnarray}
where the diffusion coefficient $\mathcal{B}$ satisfies  $(\mathbf{A}^1_{\mathcal{B}})$ and $(\mathbf{A}^2_{\mathcal{B}})$  with $g(x)=0$.

We employ the Gelfand triple
$$
\mathbb{V}:=L^{r+1}(\Lambda;\mathbb{R}) \subset \mathbb{H}:=H^{-1}(\Lambda;\mathbb{R}) \subset \mathbb{V}^{*},
$$
where $H^{-1}(\Lambda;\mathbb{R})$ denotes the dual space of $W_{0}^{1,2}(\Lambda;\mathbb{R})$, associated with the norm $\|\cdot\|_{-1}$.

Next, we establish the finite-time extinction of solutions to stochastic fast diffusion equations. Let $\tau_{e}$ be the extinction time defined by
\begin{equation*}
	\tau_{e}:=\inf\big\{t\geq 0:\|X_t\|_{-1}=0\big\},
\end{equation*}
where $(X_t)_{t\geq 0}$ is the solution to (\ref{PME}) with initial value $X_0 = x\in H^{-1}$. We present the following result.
\begin{theorem}\label{main result PME3}
For any initial data $x\in H^{-1}$, we have
	\begin{equation*}
		\|X_t\|_{-1}=0\quad \text{for all } t\geq \tau_{e}, \ \mathbb{P}\text{-a.s.}
	\end{equation*}
Furthermore, there exists a constant $c_0>0$ such that for any $T>0$,
\begin{equation*}
\mathbb{P}(\tau_{e}>T)
\leq\frac{c_0\|x\|_{-1}^{2-p}}{T}.
\end{equation*}
Moreover, we also have the moment estimate of the extinction time
\begin{equation*}
\mathbb{E}\tau_{e}\leq c_0 \|x\|_{-1}^{2-p}.
\end{equation*}
\end{theorem}

\begin{proof}
It is well known that the operator $\mathcal{A}(t,u):=\Delta (|u|^{r-1}u)$ satisfies conditions $(\mathbf{A}_1)$--$(\mathbf{A}_5)$ with $g \equiv 0$; we refer to \cite[Example~4.1.11]{LR1} for details. By applying Theorem~\ref{th3}, the assertion follows.
\end{proof}

\begin{remark}
 We note that Theorem~\ref{main result PME3} applies to the finite-time extinction of SFDEs driven by linear multiplicative noise, a setting commonly considered in the existing literature (cf.~\cite{BDR09,BDR09b,BR12,BRR15,Gess15}), i.e.,
\[
X_t=x+\int_0^t\Delta (|X_s|^{r-1}X_s)ds+\sum_{k=1}^{\infty}\int_0^t b_k X_s d\beta_k(s),
\]
where $\sum_{k=1}^{\infty}b_k^2<\infty$ and $\{\beta_k(t)\}_{k\in\mathbb{N}}$ is a sequence of independent one-dimensional standard Wiener processes.

However, to the best of our knowledge, this is the first work to investigate the finite-time extinction of SFDEs driven by general nonlinear multiplicative noise. Furthermore, in contrast to previous studies \cite{BDR09,BDR09b,BR12,BRR15}, we establish finite-time extinction with probability one for arbitrary spatial dimensions (under the standard dimension-dependent range for $r$).
\end{remark}

\begin{remark}
Beyond the aforementioned models, our results apply to all examples in \cite{LR1, RSZ1} under both linear and superlinear multiplicative noise perturbations; we omit the details for brevity.
\end{remark}

\section{Proof of main results}\label{proofM}

This section is devoted to the proofs of Theorems \ref{th1}--\ref{th3}. Specifically, in Subsection \ref{sec2.1}, we construct the Galerkin approximation of SPDE (\ref{eqSPDE}) and derive uniform a priori estimates  by employing suitable Lyapunov functions. In Subsection \ref{sec2.2}, we establish the tightness of the approximating sequence by leveraging a stopping time technique.
Subsection \ref{sec2.3} is concerned with the existence of weak solutions to (\ref{eqSPDE}), which we prove by combining the theory of pseudo-monotone operators, the stochastic compactness method, and Jakubowski's generalization of the Skorokhod representation theorem. In Subsections \ref{sec2.4} and \ref{sec2.5}, we show pathwise uniqueness and   continuous dependence of solutions on the initial data (in probability) along with the Feller property of the associated Markov semigroup.
Finally, in Subsection \ref{sec4.6}, we establish the finite-time extinction of solutions to (\ref{eqSPDE}) by utilizing a distinct Lyapunov function.

\subsection{Energy estimates  }\label{sec2.1}
Let $\{e_1,e_2,\cdots\}\subset {\mathbb{V}}$  be an orthonormal basis (ONB for short) on ${\mathbb{H}}$. Consider the maps
$\mathcal{P}_n:{\mathbb{V}}^{*}\rightarrow {\mathbb{H}}_n,~n\in\mathbb{N},$
given by
$$\mathcal{P}_n x:=\sum\limits_{i=1}^{n}{}_{{\mathbb{V}}^*}\langle x,e_i\rangle_{{\mathbb{V}}
}e_i,~x\in \mathbb{V}
^*,$$
where $\mathbb{H}_n:=\text{span}\{e_1,e_2,\cdots,e_n\}$.

If we restrict $\mathcal{P}_n$ to ${\mathbb{H}}$, denoted by $\mathcal{P}_n|_{{\mathbb{H}}}$, then it is an orthogonal projection onto ${\mathbb{H}}_n$ on ${\mathbb{H}}$.
Denote by $\{g_1,g_2,\cdots\}$ the ONB of $U$. Let
\begin{equation*}
W^{n}_t:=\tilde{\mathcal{P}}_nW_t=\sum\limits_{i=1}^{n}\langle W_t,g_i\rangle_{U}g_i,~n\in\mathbb{N},
\end{equation*}
where $\tilde{\mathcal{P}}_n$ is an orthonormal projection onto $U^n:=\text{span}\{g_1,g_2,\cdots,g_n\}$ on $U$.

For any $n\in\mathbb{N}$, we consider the following stochastic equation on ${\mathbb{H}}_n$,
\begin{eqnarray}\label{eqf}
dX^{(n)}_t=\mathcal{P}_n\mathcal{A}(t,X^{(n)}_t)dt
+\mathcal{P}_n\mathcal{B}(t,X^{(n)}_t)dW^{n}_t,
\end{eqnarray}
with initial value $x^{(n)}:=\mathcal{P}_n x$.
Under $(\mathbf{A_1})$, $(\mathbf{A_3})$, $(\mathbf{A_4})$ and the assumption (\ref{conb2}), it is clear that there exists a weak solution to (\ref{eqf}) in the sense of Definition \ref{dew} up to its life time. Furthermore, under the assumption (\ref{conb1}), the solution is non-explosive (see Lemma \ref{lemma4.10}), namely, there is a global weak solution to (\ref{eqf}).

\vspace{2mm}
We have the following a priori estimates based on choosing a suitable Lyapunov function.
\begin{lemma}\label{lemma4.10}
Suppose that the assumptions in Theorem \ref{th1} hold. Then there is a constant $C_T>0$ such that for any $t\in[0,T]$,
	\begin{equation}\label{e000}
		\sup_{n\in\mathbb{N}}\bigg\{\sup_{t\in[0,T]}\mathbb{E}\|X^{(n)}_t\|_{\mathbb{H}}^{2-\eta_0}+\mathbb{E}\int_0^T\|X^{(n)}_t\|_{\mathbb{V}}^{\alpha-\eta_0}dt\bigg\}
		\leq C_T(1+\|x\|_{\mathbb{H}}^{2-\eta_0}),
	\end{equation}
where the constant $\eta_0$ is same as in 	$(\mathbf{A_5})$.
\end{lemma}

\begin{proof}
By It\^{o}'s formula for $\|\cdot\|_{\mathbb{H}}^2$, for any $t\in[0,T]$,
\begin{eqnarray}\label{e4}
	\|X^{(n)}_t\|_{\mathbb{H}}^2
	=\!\!\!\!\!\!\!\!&&\|x^{(n)}\|_{\mathbb{H}}^2+\int_0^t\Big(2{}_{\mathbb{V}^*}\langle \mathcal{P}_n\mathcal{A}(s,X^{(n)}_s),X^{(n)}_s\rangle_{\mathbb{V}}
	\nonumber \\
	\!\!\!\!\!\!\!\!&&
	+\|\mathcal{P}_n\mathcal{B}(s,X^{(n)}_s)\tilde{\mathcal{P}}_n\|_{L_2(U,\mathbb{H})}^2\Big)ds
	\nonumber \\
	\!\!\!\!\!\!\!\!&&+2\int_0^t\langle X^{(n)}_s,\mathcal{P}_n\mathcal{B}(s,X^{(n)}_s)dW^{n}_s\rangle_{\mathbb{H}}.
\end{eqnarray}
Then using It\^{o}'s formula for the Lyapunov function $V(r):=(1+r)^{\gamma}$ with $\gamma\in(0,1)$, by $(\mathbf{A_3})$ we deduce that
\begin{eqnarray}\label{e1}
	\!\!\!\!\!\!\!\!&&(1+\|X^{(n)}_t\|_{\mathbb{H}}^2)^{\gamma}
	\nonumber \\
	\leq\!\!\!\!\!\!\!\!&&(1+\|x^{(n)}\|_{\mathbb{H}}^2)^{\gamma}-\delta\gamma\int_0^t \frac{\|X^{(n)}_s\|_{\mathbb{V}}^{\alpha}}{(1+\|X^{(n)}_s\|_{\mathbb{H}}^2)^{1-\gamma}}ds
	\nonumber \\
	\!\!\!\!\!\!\!\!&&
	+\gamma\int_0^t \frac{g(\|X^{(n)}_s\|_{\mathbb{H}}^2)+\|\mathcal{B}(s,X^{(n)}_s)\|_{L_2(U,\mathbb{H})}^2+C}{(1+\|X^{(n)}_s\|_{\mathbb{H}}^2)^{1-\gamma}}ds
	\nonumber \\
	\!\!\!\!\!\!\!\!&&-2\gamma(1-\gamma)\int_0^t \frac{\|\mathcal{B}(s,X^{(n)}_s)^*X^{(n)}_s\|_{U}^2}{(1+\|X^{(n)}_s\|_{\mathbb{H}}^2)^{2-\gamma}}ds
	+\mathcal{M}_t
	\nonumber \\
	\leq\!\!\!\!\!\!\!\!&&C_T(1+\|x\|_{\mathbb{H}}^{2\gamma})-\delta\gamma\int_0^t \frac{\|X^{(n)}_s\|_{\mathbb{V}}^{\alpha}}{(1+\|X^{(n)}_s\|_{\mathbb{H}}^2)^{1-\gamma}}ds
	\nonumber \\
	\!\!\!\!\!\!\!\!&&
	+\gamma\int_0^t \Bigg\{\frac{\big(g(\|X^{(n)}_s\|_{\mathbb{H}}^2)+\|\mathcal{B}(s,X^{(n)}_s)\|_{L_2(U,\mathbb{H})}^2\big)(1+\|X^{(n)}_s\|_{\mathbb{H}}^2)}{(1+\|X^{(n)}_s\|_{\mathbb{H}}^2)^{2-\gamma}}
	\nonumber \\
	\!\!\!\!\!\!\!\!&&
	-\frac{2(1-\gamma)\|\mathcal{B}(s,X^{(n)}_s)^*X^{(n)}_s\|_{U}^2}{(1+\|X^{(n)}_s\|_{\mathbb{H}}^2)^{2-\gamma}}\Bigg\}ds
+\mathcal{M}_t
	\nonumber \\
	=:\!\!\!\!\!\!\!\!&&
C_T(1+\|x\|_{\mathbb{H}}^{2\gamma})-\delta\gamma\int_0^t \frac{\|X^{(n)}_s\|_{\mathbb{V}}^{\alpha}}{(1+\|X^{(n)}_s\|_{\mathbb{H}}^2)^{1-\gamma}}ds+(\text{I})+\mathcal{M}_t,
\end{eqnarray}
where we denote
$$\mathcal{M}_t:=2\int_0^t \frac{\langle X^{(n)}_s,\mathcal{B}(s,X^{(n)}_s)dW^{n}_s\rangle_{\mathbb{H}}}{1+\|X^{(n)}_s\|_{\mathbb{H}}^2}.$$
In view of the assumption (\ref{conb1}) in $(\mathbf{A_5})$, we can choose  $\gamma=1-\frac{\eta_0}{2}$ such that
\begin{equation}\label{e10}
(\text{I})\leq C\int_0^t \frac{ (1+\|X^{(n)}_s\|_{\mathbb{H}}^2)^2}{(1+\|X^{(n)}_s\|_{\mathbb{H}}^2)^{2-\gamma}}ds= C\int_0^t  (1+\|X^{(n)}_s\|_{\mathbb{H}}^2)^{\gamma}ds.
\end{equation}
For any $R>0$, we define the stopping times
$$\tau_{1,M}^{(n)}:=\inf\bigg\{t\geq 0:\|X^{(n)}_t\|_{\mathbb{H}}\geq M\bigg\},$$
$$\tau_{2,M}^{(n)}:=\inf\bigg\{t\geq 0:\int_0^t \frac{\|X^{(n)}_s\|_{\mathbb{V}}^{\alpha}}{(1+\|X^{(n)}_s\|_{\mathbb{H}}^2)^{\frac{\eta_0}{2}}}ds\geq M\bigg\},$$
and
$$\tau_M^{(n)}:=\tau_{1,M}^{(n)}\wedge \tau_{2,M}^{(n)},$$
with the convention that $\inf\emptyset =\infty$.

Thus, it follows from (\ref{e1}), (\ref{e10}), and Gronwall's lemma that for any $t\in[0,T]$, we have
\begin{equation}\label{e12}
\mathbb{E}(1+\|X^{(n)}_{t\wedge \tau_M^{(n)}}\|_{\mathbb{H}}^2)^{1-\frac{\eta_0}{2}}+\delta\gamma\mathbb{E}\int_0^{t\wedge \tau_M^{(n)}} \frac{\|X^{(n)}_s\|_{\mathbb{V}}^{\alpha}}{(1+\|X^{(n)}_s\|_{\mathbb{H}}^2)^{\frac{\eta_0}{2}}}ds
	\leq C_T(1+\|x\|_{\mathbb{H}}^{2-\eta_0}).
\end{equation}
Then by letting $R\to\infty$ and using Fatou's lemma we derive
\begin{equation}\label{e2}
\mathbb{E}\|X^{(n)}_{t}\|_{\mathbb{H}}^{2-\eta_0}+\delta\gamma\mathbb{E}\int_0^{t} \frac{\|X^{(n)}_s\|_{\mathbb{V}}^{\alpha}}{(1+\|X^{(n)}_s\|_{\mathbb{H}}^2)^{\frac{\eta_0}{2}}}ds
	\leq C_T(1+\|x\|_{\mathbb{H}}^{2-\eta_0}).
\end{equation}

On the other hand,  by Young's inequality we observe that
\begin{eqnarray}\label{ei0}
\mathbb{E}\int_0^T \|X^{(n)}_t\|_{\mathbb{V}}^{\alpha-\eta_0}dt	
=\!\!\!\!\!\!\!\!&&\mathbb{E}\int_0^T \frac{\|X^{(n)}_t\|_{\mathbb{V}}^{\alpha-\eta_0}(1+\|X^{(n)}_t\|_{\mathbb{H}}^2)^{\frac{\eta_0}{2}}}{(1+\|X^{(n)}_t\|_{\mathbb{H}}^2)^{\frac{\eta_0}{2}}}dt
\nonumber \\
\leq\!\!\!\!\!\!\!\!&&\mathbb{E}\int_0^T \frac{\|X^{(n)}_t\|_{\mathbb{V}}^{\alpha-\eta_0}(1+\|X^{(n)}_t\|_{\mathbb{V}}^{\eta_0})}{(1+\|X^{(n)}_t\|_{\mathbb{H}}^2)^{\frac{\eta_0}{2}}}dt
\nonumber \\
\leq\!\!\!\!\!\!\!\!&&C\mathbb{E}\int_0^T \frac{1+\|X^{(n)}_t\|_{\mathbb{V}}^{\alpha}}{(1+\|X^{(n)}_t\|_{\mathbb{H}}^2)^{\frac{\eta_0}{2}}}dt
\nonumber \\
\leq\!\!\!\!\!\!\!\!&&C_T(1+\|x\|_{\mathbb{H}}^{2-\eta_0}).
\end{eqnarray}
Combining (\ref{e2}) with  (\ref{ei0}),
	we complete the proof.
\end{proof}

We also derive the energy estimates in probability, which are useful in proving the tightness of approximating sequences.
\begin{lemma}\label{lem3.0}
Suppose that the assumptions in Theorem \ref{th1} hold. For any $\varepsilon>0$, there exists  $\mathcal{K}>0$ such that for any $p\geq 2$,
\begin{equation}\label{apri}
\sup_{n\in\mathbb{N}}\mathbb{P}\Bigg(\sup_{t\in[0,T]}\|X^{(n)}_t\|_{\mathbb{H}}^p+\int_0^T\|X^{(n)}_t\|_{\mathbb{V}}^{\alpha}dt\geq \mathcal{K}\Bigg)
\leq \varepsilon.
\end{equation}
\end{lemma}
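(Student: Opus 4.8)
The plan is to feed the Galerkin equation (\ref{eqf}) into the concave, increasing Lyapunov function $\psi(y):=\log(1+y)$ evaluated at $\|X_t^{(n)}\|_{\mathbb H}^2$, so that the \emph{negative} Itô correction produced by the concavity of $\psi$ — together with the sign restriction $\eta\in(-1,0)$ in (\ref{conb1}) and the dissipative term in $(\mathbf{A_3})$ — absorbs the possibly superlinear growth of $g$ and of $\mathcal B$; the uniform‑in‑$n$ estimate will then follow from the maximal inequality for non‑negative supermartingales applied on a suitable exit‑time interval. To begin, fix $n$ and write $Y_t:=\|X_t^{(n)}\|_{\mathbb H}^2$. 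Since $X^{(n)}$ has continuous paths in the finite‑dimensional space $\mathbb H_n$, the elementary Itô formula applies, $\langle\mathcal P_n\mathcal A(t,u),u\rangle_{\mathbb H}={}_{\mathbb V^*}\langle\mathcal A(t,u),u\rangle_{\mathbb V}$ and $\|\mathcal P_n\mathcal B(t,u)\tilde{\mathcal P}_n\|_{L_2(U,\mathbb H)}\le\|\mathcal B(t,u)\|_{L_2(U,\mathbb H)}$ for $u\in\mathbb H_n$, and one obtains
\begin{equation*}
\psi(Y_t)=\psi(Y_0)+\int_0^t\Bigl[\frac{2{}_{\mathbb V^*}\langle\mathcal A(s,X_s^{(n)}),X_s^{(n)}\rangle_{\mathbb V}+\|\mathcal P_n\mathcal B(s,X_s^{(n)})\tilde{\mathcal P}_n\|_{L_2}^2}{1+Y_s}-\frac{2\|(\mathcal P_n\mathcal B(s,X_s^{(n)})\tilde{\mathcal P}_n)^*X_s^{(n)}\|_U^2}{(1+Y_s)^2}\Bigr]ds+M_t,
\end{equation*}
where $M_t:=\int_0^t\frac{2}{1+Y_s}\langle X_s^{(n)},\mathcal P_n\mathcal B(s,X_s^{(n)})\,dW_s^{(n)}\rangle_{\mathbb H}$ is a continuous local martingale.

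The heart of the proof, and the step I expect to be the main obstacle, is the drift estimate. Using $(\mathbf{A_3})$ to bound $2{}_{\mathbb V^*}\langle\mathcal A(s,X_s^{(n)}),X_s^{(n)}\rangle_{\mathbb V}\le g(Y_s)+C-\delta\|X_s^{(n)}\|_{\mathbb V}^\alpha$, then dividing (\ref{conb1}) by $1+Y_s$ and rearranging, one has to show that the bracketed integrand above is $\le C+\tfrac{C}{1+Y_s}$, i.e.\ that the remaining contributions — the $\|\mathcal B(s,X_s^{(n)})\|_{L_2}^2(1+\eta Y_s)$ term from (\ref{conb1}) with $\eta\in(-1,0)$, the Itô correction $-2\|(\mathcal P_n\mathcal B\tilde{\mathcal P}_n)^*X_s^{(n)}\|_U^2/(1+Y_s)^2$, and $-\delta\|X_s^{(n)}\|_{\mathbb V}^\alpha/(1+Y_s)$ — combine into a nonpositive quantity. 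This is precisely where the regularization by the nonlinear noise is used, and the book‑keeping (the exact choice of $\psi$ and the way (\ref{conb1}) is exploited) must be carried out carefully so that the superlinear terms genuinely cancel. Granting this, one gets a constant $\kappa>0$, independent of $n$, with $\psi(Y_t)\le\psi(Y_0)+\kappa t+M_t$ for all $t\in[0,T]$; in particular $t\mapsto\psi(Y_t)-\kappa t$ is a continuous local supermartingale.

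The remaining arguments are standard. For $R>\|x\|_{\mathbb H}^2$ put $\tau_R^{(n)}:=\inf\{t\ge0:\|X_t^{(n)}\|_{\mathbb H}^2\ge R\}$, so that $\|X_{\tau_R^{(n)}}^{(n)}\|_{\mathbb H}^2=R$ on $\{\tau_R^{(n)}<\infty\}$ by path‑continuity. On $[0,\tau_R^{(n)}]$ the integrand of $M$ is bounded, so $M_{\cdot\wedge\tau_R^{(n)}}$ is a true martingale and $Z_t:=\psi(Y_{t\wedge\tau_R^{(n)}})-\kappa(t\wedge\tau_R^{(n)})+\kappa T\ge0$ is a non‑negative supermartingale on $[0,T]$ with $\mathbb E Z_0\le\psi(\|x\|_{\mathbb H}^2)+\kappa T$ (using $\|x^{(n)}\|_{\mathbb H}\le\|x\|_{\mathbb H}$). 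Doob's maximal inequality for supermartingales then gives, uniformly in $n$,
\begin{equation*}
\mathbb P\bigl(\tau_R^{(n)}\le T\bigr)\le\mathbb P\Bigl(\sup_{t\in[0,T]}Z_t\ge\log(1+R)\Bigr)\le\frac{\log(1+\|x\|_{\mathbb H}^2)+\kappa T}{\log(1+R)},
\end{equation*}
which tends to $0$ as $R\to\infty$ and so controls the $\sup_t\|X_t^{(n)}\|_{\mathbb H}$‑part of (\ref{apri}).

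Finally, to handle $\int_0^T\|X_s^{(n)}\|_{\mathbb V}^\alpha ds$ I would run the plain $\|\cdot\|_{\mathbb H}^2$‑energy identity on $[0,\tau_R^{(n)}]$: by $(\mathbf{A_3})$, (\ref{conb2}) and $Y_s\le R$ there, $\delta\int_0^{T\wedge\tau_R^{(n)}}\|X_s^{(n)}\|_{\mathbb V}^\alpha ds\le\|x\|_{\mathbb H}^2+T\bigl(g(R)+C+C(1+R^{\beta/2})\bigr)+\sup_{t\le T}|\widetilde M_{t\wedge\tau_R^{(n)}}|$, where $\widetilde M$ is a local martingale whose bracket on $[0,\tau_R^{(n)}]$ is bounded by $4TRC(1+R^{\beta/2})$, so that $\sup_{t\le T}|\widetilde M_{t\wedge\tau_R^{(n)}}|$ is small in probability uniformly in $n$ for each fixed $R$ by Doob's $L^2$‑inequality. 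Splitting $\{\int_0^T\|X_s^{(n)}\|_{\mathbb V}^\alpha ds\ge\mathcal K\}\subseteq\{\tau_R^{(n)}\le T\}\cup\{\int_0^{T\wedge\tau_R^{(n)}}\|X_s^{(n)}\|_{\mathbb V}^\alpha ds\ge\mathcal K\}$ (first choosing $R$ large via the previous display, then $\mathcal K$ large), together with $\{\sup_t\|X_t^{(n)}\|_{\mathbb H}^p+\int_0^T\|X_s^{(n)}\|_{\mathbb V}^\alpha ds\ge\mathcal K\}\subseteq\{\sup_t\|X_t^{(n)}\|_{\mathbb H}\ge(\mathcal K/2)^{1/p}\}\cup\{\int_0^T\|X_s^{(n)}\|_{\mathbb V}^\alpha ds\ge\mathcal K/2\}$, yields (\ref{apri}) with $\mathcal K=\mathcal K(\varepsilon,p,\|x\|_{\mathbb H},T)$ chosen large enough.
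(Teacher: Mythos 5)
Your proposal follows the same overall strategy as the paper: both apply It\^o's formula to the concave Lyapunov function $\log(1+\|\cdot\|_{\mathbb H}^2)$, bound the drift via $(\mathbf{A_3})$, exploit the negative second--order It\^o term together with (\ref{conb1}) to neutralize the superlinear terms, and then pass to the conclusion with a supermartingale maximal inequality plus a stopping--time argument for the $\int_0^T\|X^{(n)}_t\|_{\mathbb V}^{\alpha}\,dt$ part. There are, however, two places where your argument diverges from the paper's. First, the paper keeps the quadratic variation: after choosing $\varepsilon_0=(1+\eta)/2$, it arrives at an inequality of the form $\log(1+\|X^{(n)}_t\|_{\mathbb H}^2)\le C_T+\log(1+\|x\|_{\mathbb H}^2)-\delta\int_0^t\frac{\|X^{(n)}_s\|_{\mathbb V}^{\alpha}}{1+\|X^{(n)}_s\|_{\mathbb H}^2}\,ds+\mathcal M_t-\tfrac{\varepsilon_0}{2}\langle\mathcal M\rangle_t$ and applies the maximal inequality to the exponential supermartingale $\exp\{\varepsilon_0\mathcal M_t-\tfrac{\varepsilon_0^2}{2}\langle\mathcal M\rangle_t\}$. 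This yields the quantitatively stronger polynomial tail $R^{-\varepsilon_0}$ in (\ref{e30}), whereas your Doob-for-a-nonnegative-supermartingale argument gives a $1/\log(1+R)$ tail, which suffices for (\ref{apri}) but is weaker (and the paper's polynomial bound is referred to later). Second, for $\int_0^T\|X\|_{\mathbb V}^{\alpha}$ the paper first controls $\int_0^T\frac{\|X\|_{\mathbb V}^{\alpha}}{1+\|X\|_{\mathbb H}^2}$ by the same exponential argument and then transfers via the stopping time $\varrho_M^{(n)}$, while you instead run the $\|\cdot\|_{\mathbb H}^2$ energy identity directly on $[0,\tau_R^{(n)}]$; both routes are viable.

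On the step you flag as the \emph{main obstacle}: your caution is warranted, and the ``granting this'' clause is hiding exactly the delicate point. Your It\^o formula is written correctly, with second--order term $-\tfrac{2\|(\mathcal P_n\mathcal B\tilde{\mathcal P}_n)^*X^{(n)}_s\|_U^2}{(1+Y_s)^2}$. After using (\ref{conb1}) the excess that must be absorbed is of size $\tfrac{(1-\eta)\|\mathcal B\|_{L_2}^2Y_s}{(1+Y_s)^2}$, with $1-\eta\in(1,2)$, and making this cancel requires the lower bound $\|(\mathcal P_n\mathcal B\tilde{\mathcal P}_n)^*X^{(n)}_s\|_U^2\ge\tfrac{1-\eta}{2}\,\|\mathcal B\|_{L_2}^2Y_s$; the elementary estimate only gives the reverse $\|(\mathcal P_n\mathcal B\tilde{\mathcal P}_n)^*u\|_U^2\le\|\mathcal B\|_{L_2}^2\|u\|_{\mathbb H}^2$. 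The paper's own display (\ref{e1}) writes the It\^o correction directly as $-\tfrac{2\|\mathcal B\|_{L_2}^2Y_s}{(1+Y_s)^2}$, which implicitly treats this lower bound as an equality. That is fine for the rank-one radial noises of Remark \ref{b.02}(ii), where $\mathcal B(u)$ has range spanned by $u$ and $\|\mathcal B(u)^*u\|_U^2=\|\mathcal B(u)\|_{L_2}^2\|u\|_{\mathbb H}^2$ exactly, but does not follow from $(\mathbf{A_5})$ for a general coefficient (e.g.\ Remark \ref{b.02}(i)). So if you carry out the book-keeping you deferred, you will hit the same point the paper glosses over: the absorption needs a lower bound on $\|\mathcal B(t,u)^*u\|_U^2$ in terms of $\|\mathcal B(t,u)\|_{L_2}^2\|u\|_{\mathbb H}^2$. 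Note also that the $-\delta\|X\|_{\mathbb V}^{\alpha}/(1+Y_s)$ term is pure extra dissipation and should not be listed among the contributions that need to cancel; it is simply carried along to feed the $\int\|X\|_{\mathbb V}^{\alpha}$ estimate.
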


\begin{proof}
According to the definition of $\tau_M^{(n)}$, we know that
\begin{equation}\label{e11}
\mathbb{P}\bigg(\Big\{\|X^{(n)}_{\tau_M^{(n)}}\|_{\mathbb{H}}\geq M\Big\}\cup\Big\{\int_0^{\tau_M^{(n)}}\frac{\|X^{(n)}_t\|_{\mathbb{V}}^{\alpha}}{(1+\|X^{(n)}_t\|_{\mathbb{H}}^2)^{\frac{\eta_0}{2}}}dt\geq M\Big\}\bigg)
=1.
\end{equation}
Due to (\ref{e12})  and (\ref{e11}), we have
\begin{eqnarray}\label{e13}
\!\!\!\!\!\!\!\!&&\mathbb{P}\Big(\sup_{t\in[0,T]}\|X^{(n)}_t\|_{\mathbb{H}}\geq M\Big)+\mathbb{P}\bigg(\int_0^{T}\frac{\|X^{(n)}_t\|_{\mathbb{V}}^{\alpha}}{(1+\|X^{(n)}_t\|_{\mathbb{H}}^2)^{\frac{\eta_0}{2}}}dt\geq M\bigg)
\nonumber \\
=\!\!\!\!\!\!\!\!&&\mathbb{P}(\tau_{1,M}^{(n)}\leq T)+\mathbb{P}(\tau_{2,M}^{(n)}\leq T)
\nonumber \\
\leq\!\!\!\!\!\!\!\!&&2\mathbb{P}\bigg(\{\tau_{M}^{(n)}\leq T\}\cap\Big(\Big\{\|X^{(n)}_{\tau_M^{(n)}}\|_{\mathbb{H}}\geq M\Big\}\cup\Big\{\int_0^{\tau_M^{(n)}}\frac{\|X^{(n)}_t\|_{\mathbb{V}}^{\alpha}}{(1+\|X^{(n)}_t\|_{\mathbb{H}}^2)^{\frac{\eta_0}{2}}}dt\geq M\Big\}       \Big)\bigg)
\nonumber \\
\leq\!\!\!\!\!\!\!\!&&2\mathbb{P}\Big(\|X^{(n)}_{T\wedge\tau_M^{(n)}}\|_{\mathbb{H}}\geq M\Big)+2\mathbb{P}\bigg(\int_0^{T\wedge\tau_M^{(n)}}\frac{\|X^{(n)}_t\|_{\mathbb{V}}^{\alpha}}{(1+\|X^{(n)}_t\|_{\mathbb{H}}^2)^{\frac{\eta_0}{2}}}dt\geq M\bigg)
\nonumber \\
\leq\!\!\!\!\!\!\!\!&&\frac{2\mathbb{E}\|X^{(n)}_{T\wedge\tau_M^{(n)}}\|_{\mathbb{H}}^{2-\eta_0}}{M^{2-\eta_0}}+\frac{2\mathbb{E}\int_0^{T\wedge\tau_M^{(n)}}\frac{\|X^{(n)}_t\|_{\mathbb{V}}^{\alpha}}{(1+\|X^{(n)}_t\|_{\mathbb{H}}^2)^{\frac{\eta_0}{2}}}dt}{M}
\nonumber \\
\leq\!\!\!\!\!\!\!\!&&C_T(1+\|x\|_{\mathbb{H}}^{2-\eta_0})\Big(\frac{1}{M^{2-\eta_0}}+\frac{1}{M}\Big).
\end{eqnarray}
Now we estimate  $\int_0^T \|X^{(n)}_t\|_{\mathbb{V}}^{\alpha}dt$. Using (\ref{e13}), we derive
\begin{eqnarray}\label{e6}
\!\!\!\!\!\!\!\!&&\mathbb{P}\Bigg(\int_0^T \|X^{(n)}_t\|_{\mathbb{V}}^{\alpha}dt\geq R\Bigg)
\nonumber \\
~~~\leq\!\!\!\!\!\!\!\!&&\mathbb{P}\Bigg(\int_0^T \|X^{(n)}_t\|_{\mathbb{V}}^{\alpha}dt\geq R,\tau_{1,M}^{(n)}\geq T\Bigg)+
\mathbb{P}(\tau_{1,M}^{(n)}< T)
\nonumber \\
\leq\!\!\!\!\!\!\!\!&&\mathbb{P}\Bigg(\int_0^{T\wedge\tau_{1,M}^{(n)}} \frac{\|X^{(n)}_t\|_{\mathbb{V}}^{\alpha}}{(1+\|X^{(n)}_t\|_{\mathbb{H}}^2)^{\frac{\eta_0}{2}}}\cdot\big(1+\|X^{(n)}_t\|_{\mathbb{H}}^2\big)^{\frac{\eta_0}{2}} dt\geq R\Bigg)+
\mathbb{P}(\tau_{1,M}^{(n)}< T)
\nonumber \\
\leq\!\!\!\!\!\!\!\!&&\mathbb{P}\Bigg(C_M\int_0^{T} \frac{\|X^{(n)}_t\|_{\mathbb{V}}^{\alpha}}{(1+\|X^{(n)}_t\|_{\mathbb{H}}^2)^{\frac{\eta_0}{2}}} dt\geq R\Bigg)+
\mathbb{P}\Big(\sup_{t\in[0,T]}\|X^{(n)}_t\|_{\mathbb{H}}\geq M\Big).
\nonumber \\
\leq\!\!\!\!\!\!\!\!&&C_{M,T}(1+\|x\|_{\mathbb{H}}^{2-\eta_0})\Big(\frac{1}{R^{2-\eta_0}}+\frac{1}{R}\Big)+C_T(1+\|x\|_{\mathbb{H}}^{2-\eta_0})\Big(\frac{1}{M^{2-\eta_0}}+\frac{1}{M}\Big).
\end{eqnarray}
Consequently, combining (\ref{e13}) with (\ref{e6}), we first take $R\to \infty$ and then $M\to\infty$ to obtain
\begin{equation*}
\lim_{R\to\infty}\mathbb{P}\Bigg(\sup_{t\in[0,T]}\|X^{(n)}_t\|_{\mathbb{H}}^p+\int_0^T \|X^{(n)}_t\|_{\mathbb{V}}^{\alpha}dt\geq R\Bigg)=0.
\end{equation*}
 We complete the proof.
\end{proof}

\vspace{2mm}
Based on Lemma \ref{lem3.0}, we have the following bounds.
\begin{lemma}\label{lem1}
Suppose that the assumptions in Theorem \ref{th1} hold. For any $\varepsilon>0$ there exists  $\mathcal{K}>0$ such that
\begin{eqnarray}
\!\!\!\!\!\!\!\!&&\sup_{n\in\mathbb{N}}\mathbb{P}\Bigg(\int_0^T\|\mathcal{A}(t,X^{(n)}_t)\|_{\mathbb{V}^*}^{\frac{\alpha}{\alpha-1}}dt\geq \mathcal{K}\Bigg)
\leq \varepsilon.\label{apriA}
\end{eqnarray}
\end{lemma}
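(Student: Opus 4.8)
The plan is to deduce this bound directly from the growth condition $(\mathbf{A_4})$ together with the a priori estimate in probability already obtained in Lemma \ref{lem3.0}. First I would use $(\mathbf{A_4})$ pointwise in time: for a.e.\ $t\in[0,T]$,
$$
\|\mathcal{A}(t,X^{(n)}_t)\|_{\mathbb{V}^*}^{\frac{\alpha}{\alpha-1}}\leq C\big(1+\|X^{(n)}_t\|_{\mathbb{V}}^{\alpha}\big)\big(1+\|X^{(n)}_t\|_{\mathbb{H}}^{\beta}\big),
$$
and then bound the $\mathbb{H}$-norm factor by its supremum in time. Integrating over $[0,T]$ gives
$$
\int_0^T\|\mathcal{A}(t,X^{(n)}_t)\|_{\mathbb{V}^*}^{\frac{\alpha}{\alpha-1}}dt\leq C\Big(1+\sup_{t\in[0,T]}\|X^{(n)}_t\|_{\mathbb{H}}^{\beta}\Big)\Big(T+\int_0^T\|X^{(n)}_t\|_{\mathbb{V}}^{\alpha}dt\Big),
$$
so the left-hand side is controlled by a continuous function of the two quantities $\sup_{t\in[0,T]}\|X^{(n)}_t\|_{\mathbb{H}}$ and $\int_0^T\|X^{(n)}_t\|_{\mathbb{V}}^{\alpha}dt$ that appear in Lemma \ref{lem3.0}.

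Next I would invoke Lemma \ref{lem3.0}: fix $\varepsilon>0$ and choose $p=\beta\vee 2$; then there exists $\mathcal{K}_0>0$, independent of $n$, such that the event
$$
\Omega_n:=\Big\{\sup_{t\in[0,T]}\|X^{(n)}_t\|_{\mathbb{H}}^{p}+\int_0^T\|X^{(n)}_t\|_{\mathbb{V}}^{\alpha}dt<\mathcal{K}_0\Big\}
$$
has probability $\mathbb{P}(\Omega_n)\geq 1-\varepsilon$ for all $n\in\mathbb{N}$. On $\Omega_n$ one has simultaneously $\sup_{t\in[0,T]}\|X^{(n)}_t\|_{\mathbb{H}}^{\beta}\leq \mathcal{K}_0^{\beta/p}$ and $\int_0^T\|X^{(n)}_t\|_{\mathbb{V}}^{\alpha}dt\leq\mathcal{K}_0$, whence by the displayed inequality
$$
\int_0^T\|\mathcal{A}(t,X^{(n)}_t)\|_{\mathbb{V}^*}^{\frac{\alpha}{\alpha-1}}dt\leq C\big(1+\mathcal{K}_0^{\beta/p}\big)\big(T+\mathcal{K}_0\big)=:\mathcal{K}.
$$
Since $\mathcal{K}$ depends only on $C,T,\beta,p,\mathcal{K}_0$ and not on $n$, it follows that
$$
\mathbb{P}\Big(\int_0^T\|\mathcal{A}(t,X^{(n)}_t)\|_{\mathbb{V}^*}^{\frac{\alpha}{\alpha-1}}dt\geq \mathcal{K}\Big)\leq \mathbb{P}(\Omega_n^c)\leq\varepsilon
$$
for every $n\in\mathbb{N}$, which is precisely \eqref{apriA}.

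There is no serious obstacle here: the estimate is a soft consequence of the growth bound $(\mathbf{A_4})$ and the already-established probabilistic tightness of the energy functional in Lemma \ref{lem3.0}. The only minor point to get right is the bookkeeping of exponents — choosing $p\geq\beta$ (legitimate since Lemma \ref{lem3.0} holds for all $p\geq 2$ and $\beta\geq 2$) so that $\sup_t\|X^{(n)}_t\|_{\mathbb{H}}^{\beta}$ is dominated on the good set $\Omega_n$ — and tracking that the resulting constant $\mathcal{K}$ is genuinely independent of $n$, which it is because $\mathcal{K}_0$ from Lemma \ref{lem3.0} is.
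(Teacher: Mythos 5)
Your proof is correct and uses essentially the same approach as the paper: bound the integrand pointwise via the growth condition $(\mathbf{A_4})$ and then invoke the uniform-in-$n$ probability bound from Lemma~\ref{lem3.0} to control the factors $\sup_{t}\|X^{(n)}_t\|_{\mathbb{H}}$ and $\int_0^T\|X^{(n)}_t\|_{\mathbb{V}}^{\alpha}\,dt$. The only superficial difference is that you work directly on the good set $\Omega_n$ and read off a deterministic bound there, whereas the paper phrases the same idea through the stopping time $\tau_M^{(n)}$ and a Markov inequality followed by the double limit $R\to\infty$, $M\to\infty$; both implementations are equivalent.
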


\begin{proof}
Set the following stopping time
\begin{equation}\label{stop1}
\tilde{\tau}_M^{(n)}:=\inf\Bigg\{t\in[0,T]:\|X^{(n)}_t\|_{\mathbb{H}}+\int_0^t \|X^{(n)}_s\|_{\mathbb{V}}^{\alpha}ds\geq M\Bigg\}\wedge T,~~M>0,
\end{equation}
with the convention $\inf\emptyset=\infty$.

According to the assumption (\ref{conA3}), we deduce that
\begin{eqnarray*}
\!\!\!\!\!\!\!\!&&\mathbb{P}\Bigg(\int_0^T\|\mathcal{A}(t,X^{(n)}_t)\|_{\mathbb{V}^*}^{\frac{\alpha}{\alpha-1}}dt\geq R\Bigg)
\nonumber \\
\leq\!\!\!\!\!\!\!\!&&\mathbb{P}\Bigg(\int_0^T\|\mathcal{A}(t,X^{(n)}_t)\|_{\mathbb{V}^*}^{\frac{\alpha}{\alpha-1}}dt\geq R,\tilde{\tau}_M^{(n)}\geq T\Bigg)+
\mathbb{P}(\tilde{\tau}_M^{(n)}< T)
\nonumber \\
\leq\!\!\!\!\!\!\!\!&&\mathbb{P}\Bigg(C\int_0^{T\wedge\tilde{\tau}_M^{(n)}} \big(1+\|X^{(n)}_t\|_{\mathbb{V}}^{\alpha}\big)\big(1+\|X^{(n)}_t\|_{\mathbb{H}}^{\beta}\big) dt\geq R\Bigg)+
\mathbb{P}(\tilde{\tau}_M^{(n)}< T)
\nonumber \\
\leq\!\!\!\!\!\!\!\!&&\frac{C_M}{R}+
\mathbb{P}\Bigg(\sup_{t\in[0,T]}\|X^{(n)}_t\|_{\mathbb{H}}+\int_0^T \|X^{(n)}_s\|_{\mathbb{V}}^{\alpha}ds\geq M\Bigg).
\end{eqnarray*}
By Lemma \ref{lem3.0}, letting $R\to \infty$ and then $M\to\infty$, we complete the assertion.
\end{proof}

\subsection{Tightness of approximating solutions}\label{sec2.2}
Set
\begin{eqnarray}
\!\!\!\!\!\!\!\!&&\mathcal{Z}^1_T:=\mathbb{C}_T(\mathbb{V}^*)\cap L^{\alpha}([0,T];\mathbb{H})\cap L^{\alpha}_w([0,T];\mathbb{V}),\label{spz}
 \\
\!\!\!\!\!\!\!\!&&\mathcal{Z}^2_T:=L^{\frac{\alpha}{\alpha-1}}_w([0,T];\mathbb{V}^*),\nonumber
\end{eqnarray}
where $L^{\alpha}_w([0,T];\mathbb{V})$, $L^{\frac{\alpha}{\alpha-1}}_w([0,T];\mathbb{V}^*)$ denote spaces $L^{\alpha}([0,T];{\mathbb{V}})$, $L^{\frac{\alpha}{\alpha-1}}([0,T];\mathbb{V}^*)$ endowed with the weak topology, respectively.
In this subsection, we will show that $\{X^{(n)}\}_{n\in\mathbb{N}}$, $\{\mathcal{A}(\cdot,X^{(n)}_{\cdot})\}_{n\in\mathbb{N}}$
 are tight in $\mathcal{Z}^1_T$, $\mathcal{Z}^2_T$, respectively.

\begin{remark}\label{k2t}
Here the intersection space $\mathcal{Z}^1_T$ takes the following intersection topology denoted by $\tau_{\mathcal{Z}^1_T}$: the class of
open sets of $\mathcal{Z}^1_T$  are generated by the sets of the form $\mathscr{O}_1\cap\mathscr{O}_2\cap\mathscr{O}_3$, where $\mathscr{O}_1$, $\mathscr{O}_2$ and $\mathscr{O}_3$ are the open sets in $\mathbb{C}_T(\mathbb{V}^*)$,  $L^{\alpha}([0,T];\mathbb{H})$ and  $L^{\alpha}_w([0,T];\mathbb{V})$, respectively. Let $\mathscr{B}(\tau_{\mathcal{Z}^1_T})$ be the corresponding Borel $\sigma$-algebra.
\end{remark}

In the following, we formulate the compactness criterion for the space $\mathcal{Z}^1_T$.
\begin{lemma}\label{lemc}
Let  $\mathcal{K}$ be a subset of $\mathcal{Z}^1_T$ such that the following conditions hold
\begin{enumerate}[(i)]


\item $$\sup_{X\in\mathcal{K}}\int_0^T\|X_t\|_{\mathbb{V}}^\alpha dt<\infty,$$

\item $$\lim_{\delta\to 0}\sup_{X\in\mathcal{K}}\sup_{s,t\in[0,T],|t-s|\leq \delta}\|X_t-X_s\|_{\mathbb{V}^*}=0.$$
\end{enumerate}
Then $\mathcal{K}$ is relatively compact in $\mathcal{Z}^1_T$.
\end{lemma}
\begin{proof}
  Without loss of generality, we suppose that $\mathcal{K}$ is $\tau_{\mathcal{Z}^1_T}$-closed.
  It is known that  the weak topology in  $L^{\alpha}_w([0,T];\mathbb{V})$ restricted in $\mathcal{K}$ is metrizable. Therefore, the compactness of a subset of  $\mathcal{Z}^1_T$ is equivalent to the sequential compactness.

Now, let  $\{X^{(n)}\}_{n\in\mathbb{N}}$ denote a sequence in $\mathcal{K}$. It is sufficient to prove that there exists an element $X\in\mathcal{K}$ such that along a subsequence still denoted by
$\{X^{(n)}\}_{n\in\mathbb{N}}$, we have
\begin{equation}\label{conx0}
X^{(n)}\to X~\text{in}~\mathcal{Z}^1_T~\text{as}~n\to\infty.
\end{equation}
First, due to the Banach-Alaoglu theorem and the condition (i), it follows that
\begin{equation}\label{conx6}
\mathcal{K}~\text{is compact in}~L^{\alpha}_w([0,T];\mathbb{V}).
\end{equation}

On the other hand, let us consider the measurable set
$$\mathcal{N}:=\big\{t\in[0,T]:\lim_{n\to\infty}\|X^{(n)}_t\|_{\mathbb{V}}=\infty \big\}.$$
We claim that it is a Lebesgue null set.  Indeed, we note that otherwise, by Fatou's lemma
$$\liminf_{n\to\infty}\int_{0}^{T}\|X^{(n)}_t\|_{\mathbb{V}}^{\alpha}dt\geq \int_{\mathcal{N}}\liminf_{n\to\infty}\|X^{(n)}_t\|_{\mathbb{V}}^{\alpha}dt=\infty,$$
which contradicts condition (i).

Therefore,  there exist an $dt$-null set $\mathcal{N}$ and a subsequence  still denoted by
$\{X^{(n)}\}_{n\in\mathbb{N}}$ such that for any $t\in [0,T]\backslash \mathcal{N}$,
$$\{\|X^{(n)}_t\|_{\mathbb{V}}\}_{n\in\mathbb{N}}~~\text{is bounded}.$$
By the assumption of Theorem \ref{th1}, the embedding $\mathbb{V}\subset \mathbb{H}$ is compact, we infer that $\mathbb{V}\subset\mathbb{V}^*$ is compact as well. Thus, the sequence $\{X^{(n)}_t\}_{n\in\mathbb{N}}$ contains a subsequence that is convergent in $\mathbb{V}^*$.

Denote $\{t_k\}_{k\in\mathbb{N}}\subset ([0,T]\backslash \mathcal{N})\cap \mathbb{Q}$. Applying the diagonal method, we can choose a subsequence still denoted by
$\{X^{(n)}\}_{n\in\mathbb{N}}$ such that
\begin{equation}\label{conx1}
\{X^{(n)}_{t_k}\}_{n\in\mathbb{N}}~\text{is convergent in}~\mathbb{V}^*~\text{for all}~ k\in\mathbb{N}.
\end{equation}
Next, we prove that the sequence $\{X^{(n)}\}_{n\in\mathbb{N}}$ is a Cauchy net in $\mathbb{C}_T(\mathbb{V}^*)$. For any $\varepsilon>0$, by the condition (ii) there exists $\delta>0$ such that
\begin{equation}\label{conx2}
\sup_{X\in\mathcal{K}}\sup_{s,t\in[0,T],|t-s|\leq \delta}\|X_t-X_s\|_{\mathbb{V}^*}<\frac{\varepsilon}{3}.
\end{equation}
Fix $t\in[0,T]$. We can find an $t_k\in ([0,T]\backslash \mathcal{N})\cap \mathbb{Q}$  such that  $|t_k-t|\leq \delta$. Combining (\ref{conx1})-(\ref{conx2}), for large enough $n,m\in\mathbb{N}$ we deduce that
$$\|X^{(n)}_t-X^{(m)}_t\|_{\mathbb{V}^*}\leq \|X^{(n)}_t-X^{(n)}_{t_k}\|_{\mathbb{V}^*}+\|X^{(n)}_{t_k}-X^{(m)}_{t_k}\|_{\mathbb{V}^*}+\|X^{(m)}_{t_k}-X^{(m)}_{t}\|_{\mathbb{V}^*}\leq \varepsilon.$$
Since $t\in[0,T]$ is arbitrary, we can obtain that
$$\sup_{t\in[0,T]}\|X^{(n)}_t-X^{(m)}_t\|_{\mathbb{V}^*}\leq \varepsilon,$$
which yields that
\begin{equation}\label{conx7}
\{X^{(n)}\}_{n\in\mathbb{N}}~\text{is a Cauchy net in}~\mathbb{C}_T(\mathbb{V}^*).
\end{equation}
Collecting (\ref{conx6}) and (\ref{conx7}), there exists a subsequence still denoted by
$\{X^{(n)}\}_{n\in\mathbb{N}}$ and $X\in \mathbb{C}_T(\mathbb{V}^*)\cap L^{\alpha}([0,T];\mathbb{V})$ such that
\begin{equation}\label{conx3}
X^{(n)}\to X~\text{in}~\mathbb{C}_T(\mathbb{V}^*)\cap L_w^{\alpha}([0,T];\mathbb{V})~\text{as}~n\to\infty.
\end{equation}

Hence, once we can prove
\begin{equation}\label{conx4}
X^{(n)}\to X~\text{in}~L^{\alpha}([0,T];\mathbb{H})~\text{as}~n\to\infty,
\end{equation}
then (\ref{conx0}) follows.
Since the embedding $\mathbb{V}\subset \mathbb{H}$ is compact, by the Lions lemma \cite{Lion} for any $\varepsilon>0$, there exists a constant $C_{\varepsilon}>0 $ such that for almost all $t\in[0,T]$,
$$\|X^{(n)}_t-X_t\|^{\alpha}_{ \mathbb{H}}\leq \varepsilon\|X^{(n)}_t-X_t\|^{\alpha}_{ \mathbb{V}}+C_{\varepsilon}\|X^{(n)}_t-X_t\|^{\alpha}_{ \mathbb{V}^*},~n\in\mathbb{N},$$
which also yields that
\begin{eqnarray}\label{conx5}
 \!\!\!\!\!\!\!\!&&\|X^{(n)}-X\|_{L^{\alpha}([0,T];\mathbb{H})}^{\alpha}
\nonumber \\
\leq   \!\!\!\!\!\!\!\!&& \varepsilon\|X^{(n)}-X\|^{\alpha}_{L^{\alpha}([0,T];\mathbb{V})}+C_{\varepsilon}\|X^{(n)}-X\|^{\alpha}_{L^{\alpha}([0,T];\mathbb{V}^*)},~n\in\mathbb{N}.
\end{eqnarray}
Taking the upper limit as $n\to\infty$ in (\ref{conx5}) and using the following bounds
$$\|X^{(n)}-X\|^{\alpha}_{L^{\alpha}([0,T];\mathbb{V})}\leq C(\|X^{(n)}\|^{\alpha}_{L^{\alpha}([0,T];\mathbb{V})}+\|X\|^{\alpha}_{L^{\alpha}([0,T];\mathbb{V})})\leq C,$$
we can conclude that
$$\lim_{n\to\infty}\|X^{(n)}-X\|^{\alpha}_{L^{\alpha}([0,T];\mathbb{H})}=0.$$
We complete the proof.
\end{proof}

\vspace{1mm}
We recall the  Aldous condition in the space $\mathbb{V}^*$.
\begin{definition}\label{aldo}
A sequence $\{X^{(n)}\}_{n\in\mathbb{N}}$ is said to satisfy the Aldous condition in $\mathbb{V}^*$ iff for any $\varepsilon,\eta>0$, there exists $\delta>0$ such that for every stopping time sequence $(\zeta_n)_{n\in\mathbb{N}}$ with $\zeta_n\leq T$ one has
\begin{equation*}
\sup_{n\in\mathbb{N}}\sup_{0\leq \Delta\leq \delta}\mathbb{P}(\|X^{(n)}_{\zeta_n+\Delta}-X^{(n)}_{\zeta_n}\|_{\mathbb{V}^*}\geq \eta)\leq \varepsilon.
\end{equation*}
\end{definition}

The following lemma presents a tightness criterion for the laws of sequence $\{X^{(n)}\}_{n\in\mathbb{N}}$ on  $\mathcal{Z}^1_T$.
\begin{lemma}\label{lemt}
Let $\{X^{(n)}\}_{n\in\mathbb{N}}$ be a sequence of continuous $\{\mathscr{F}_t\}$-adapted $\mathbb{V}^*$-valued processes such that
\begin{enumerate}[(i)]


\item $$\lim_{R\rightarrow\infty}\sup_{n\in\mathbb{N}}\mathbb{P}\Big(\int_0^T\|X^{(n)}_t\|_{\mathbb{V}}^\alpha dt>R\Big)=0,$$

\item $\{X^{(n)}\}_{n\in\mathbb{N}}$ satisfies the  Aldous condition in $\mathbb{V}^*$.
\end{enumerate}
Let $\mu_n$ be the law of $X^{(n)}$ on the Borel $\sigma$-algebra $\mathscr{B}(\tau_{\mathcal{Z}_T^1})$. Then for every $\varepsilon>0$ there exists a compact subset $\mathcal{K}_{\varepsilon}$ of $\mathcal{Z}_T^1$ such that
$$\sup_{n\in\mathbb{N}}\mu_n(\mathcal{K}_{\varepsilon}^c)\leq \varepsilon.$$
\end{lemma}

\begin{proof}
In view of (i), for any $\varepsilon>0$ there exists $R>0$ such that
$$\sup_{n\in\mathbb{N}}\mathbb{P}\Big(\int_0^T\|X^{(n)}_t\|_{\mathbb{V}}^{\alpha}dt>R\Big)\leq \frac{\varepsilon}{2}.$$
We denote
$$\mathcal{K}:=\Big\{X^{(n)}\in\mathcal{Z}_T^1:\int_0^T\|X^{(n)}_t\|_{\mathbb{V}}^{\alpha}dt\leq R\Big\}.$$
By Lemmas 3.6 and 3.8 in  \cite{BM13}, in view of (ii) there exists a subset $\mathcal{A}_{\varepsilon}\subset \mathbb{C}_T(\mathbb{V}^*)$ such that
$$ \mu_n(\mathcal{A}_{\varepsilon}^c)\leq \frac{\varepsilon}{2}$$
and
$$\lim_{\delta\to 0}\sup_{X\in\mathcal{A}_\varepsilon}\sup_{s,t\in[0,T],|t-s|\leq \delta}\|X_t-X_s\|_{\mathbb{V}^*}=0.$$
Finally, we denote by $\mathcal{K}_{\varepsilon}$ the closure of the set $ \mathcal{K}\cap \mathcal{A}_{\varepsilon}$ in $\mathcal{Z}_T^1$. Due to the compactness criterion presented in Lemma \ref{lemc}, we conclude that $\mathcal{K}_{\varepsilon}$ is a compact set in $\mathcal{Z}_T^1$.  The proof is complete.
\end{proof}

Based on Lemmas  \ref{lem3.0} and \ref{lemt}, in order to show the tightness of $\{X^{(n)}\}_{n\in\mathbb{N}}$ in $\mathcal{Z}^1_T$ it is sufficient to prove $\{X^{(n)}\}_{n\in\mathbb{N}}$ satisfies the  Aldous condition in $\mathbb{V}^*$, which is given as follows.
\begin{lemma}\label{lem6}
$\{X^{(n)}\}_{n\in\mathbb{N}}$ satisfies the  Aldous condition in $\mathbb{V}^*$ in the sense of Definition \ref{aldo}.
\end{lemma}
\begin{proof}
Recall Lemma \ref{lem3.0}, we know
\begin{equation}\label{e8}
\lim_{M\rightarrow\infty}\sup_{n\in\mathbb{N}}\mathbb{P}\big(\tilde{\tau}_M^{(n)}<T\big)=0,
\end{equation}
where the stopping time $\tilde{\tau}_M^{(n)}$ is defined by (\ref{stop1}).
In addition, we have
\begin{eqnarray}\label{sec3es2}
\!\!\!\!\!\!\!\!&&\mathbb{P}\Big(\| X^{(n)}_{\zeta_n+\Delta}-X^{(n)}_{\zeta_n}\|_{\mathbb{V}^*}\geq\varepsilon\Big)
\nonumber \\
\leq \!\!\!\!\!\!\!\!&&\mathbb{P}\Big(\| X^{(n)}_{\zeta_n+\Delta}-X^{(n)}_{\zeta_n}\|_{\mathbb{V}^*}\geq\varepsilon,\tilde{\tau}_M^{(n)}\geq T\Big)+\mathbb{P}\big(\tilde{\tau}_M^{(n)}<T\big)
\nonumber \\
\leq \!\!\!\!\!\!\!\!&&\frac{1}{\varepsilon^{\frac{\alpha}{\alpha-1}}}\mathbb{E}\|X^{(n)}_{(\zeta_n+\Delta)\wedge\tilde{\tau}_M^{(n)}}-X^{(n)}_{\zeta_n\wedge\tilde{\tau}_M^{(n)}}\|_{\mathbb{V}^*}^{\frac{\alpha}{\alpha-1}}+\mathbb{P}\big(\tilde{\tau}_M^{(n)}<T\big).
\end{eqnarray}

Now we estimate the first term on the right hand side of (\ref{sec3es2}). By (\ref{eqf}) and applying B-D-G's inequality, it follows that
\begin{eqnarray}\label{esq1}
\!\!\!\!\!\!\!\!&&\mathbb{E}\| X^{(n)}_{(\zeta_n+\Delta)\wedge\tilde{\tau}_M^{(n)}}-X^{(n)}_{\zeta_n\wedge\tilde{\tau}_M^{(n)}}\|_{\mathbb{V}^*}^{\frac{\alpha}{\alpha-1}}
\nonumber \\
\leq\!\!\!\!\!\!\!\!&&C\mathbb{E}\Bigg\{\int_{\zeta_n\wedge \tilde{\tau}_M^{(n)}}^{(\zeta_n+\Delta)\wedge  \tilde{\tau}_M^{(n)}}\|\mathcal{P}_n \mathcal{A}(s,X^{(n)}_s)\|_{\mathbb{V}^*}ds\Bigg\}^{\frac{\alpha}{\alpha-1}}
\nonumber \\
 \!\!\!\!\!\!\!\!&&+C\mathbb{E}\Bigg\{\int_{\zeta_n\wedge \tilde{\tau}_M^{(n)}}^{(\zeta_n+\Delta)\wedge  \tilde{\tau}_M^{(n)}}\| \mathcal{P}_n \mathcal{B}(s,X^{(n)}_s)\tilde{\mathcal{P}}_n\|_{L_2(U, \mathbb{H})}^2ds\Bigg\}^{\frac{\alpha}{2(\alpha-1)}}
 \nonumber \\
=: \!\!\!\!\!\!\!\!&&\text{(I)}+\text{(II)}.
\end{eqnarray}
For $\text{(I)}$, by $(\mathbf{A_4})$ and H\"{o}lder's inequality, we have
\begin{eqnarray}\label{e9}
\text{(I)}\leq  \!\!\!\!\!\!\!\!&&C |\Delta|^{\frac{1}{\alpha-1}}\cdot \mathbb{E}\Bigg\{\int_{\zeta_n\wedge \tilde{\tau}_M^{(n)}}^{(\zeta_n+\Delta)\wedge  \tilde{\tau}_M^{(n)}}\|\mathcal{P}_n \mathcal{A}(s,X^{(n)}_s)\|_{\mathbb{V}^*}^{\frac{\alpha}{\alpha-1}}ds\Bigg\}
\nonumber \\
\leq \!\!\!\!\!\!\!\!&&C |\Delta|^{\frac{1}{\alpha-1}}\cdot\mathbb{E}\Bigg\{\int_{0}^{T\wedge \tilde{\tau}_M^{(n)}}\big(1+\|X_{s}^{(n)}\|_{\mathbb{V}}^\alpha\big)
\big(1+\|X_{s}^{(n)}\|_{\mathbb{H}}^\beta\big)ds\Bigg\}
\nonumber \\
\leq \!\!\!\!\!\!\!\!&&C_{M,T} |\Delta|^{\frac{1}{\alpha-1}}.
\end{eqnarray}
For $\text{(II)}$, we can get
\begin{eqnarray}\label{esq3}
\text{(II)}\leq  \!\!\!\!\!\!\!\!&&C\mathbb{E}\Bigg\{\int_{\zeta_n\wedge \tilde{\tau}_M^{(n)}}^{(\zeta_n+\Delta)\wedge  \tilde{\tau}_M^{(n)}}
\big(1+\|X_{s}^{(n)}\|_{\mathbb{H}}^{\beta}\big)ds\Bigg\}^{\frac{\alpha}{2(\alpha-1)}}
\nonumber \\
 \leq\!\!\!\!\!\!\!\!&&C_{M}|\Delta|^{\frac{\alpha}{2(\alpha-1)}}.
\end{eqnarray}
Combining (\ref{esq1})-(\ref{esq3}) gives
\begin{eqnarray}\label{esq4}
\lim_{\Delta\to 0}\sup_{n\in\mathbb{N}}\mathbb{E}\|X^{(n)}_{(\zeta_n+\Delta)\wedge\tilde{\tau}_M^{(n)}}-X^{(n)}_{\zeta_n\wedge\tilde{\tau}_M^{(n)}}\|_{\mathbb{V}^*}^{\frac{\alpha}{\alpha-1}}=0.
\end{eqnarray}
Finally, taking  into account (\ref{e8}), (\ref{sec3es2}) and (\ref{esq4}) and  letting $\Delta\to 0$  then $M\to\infty$ in (\ref{sec3es2}), we conclude that the Aldous condition holds.
The proof is completed.
\end{proof}

\vspace{2mm}
We now give the tightness of $\{X^{(n)}\}_{n\in\mathbb{N}}$ in $\mathcal{Z}^1_T$.
\begin{lemma}\label{coro1}
$\{X^{(n)}\}_{n\in\mathbb{N}}$ is tight in $\mathcal{Z}^1_T$.
\end{lemma}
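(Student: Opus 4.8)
The plan is to obtain tightness in the intersection space $\mathcal{Z}^1_T$ by combining the three facts already established: tightness of $\{X^{(n)}\}_{n\in\mathbb{N}}$ in $\mathbb{C}_T(\mathbb{V}^*)$ (Lemma~\ref{lem6}), tightness in $L^{\alpha}([0,T];\mathbb{H})$ (Lemma~\ref{lem7}), and the probabilistic a priori bound on $\int_0^T\|X^{(n)}_t\|_{\mathbb{V}}^{\alpha}dt$ (Lemma~\ref{lem3.0}). Fix $\varepsilon>0$. Lemma~\ref{lem6} provides a set $K^1_\varepsilon$ compact in $\mathbb{C}_T(\mathbb{V}^*)$ with $\sup_{n}\mathbb{P}(X^{(n)}\notin K^1_\varepsilon)<\varepsilon/3$; Lemma~\ref{lem7} a set $K^2_\varepsilon$ compact in $L^{\alpha}([0,T];\mathbb{H})$ with $\sup_{n}\mathbb{P}(X^{(n)}\notin K^2_\varepsilon)<\varepsilon/3$; and Lemma~\ref{lem3.0} a number $R_\varepsilon>0$ with $\sup_{n}\mathbb{P}\big(\int_0^T\|X^{(n)}_t\|_{\mathbb{V}}^{\alpha}dt>R_\varepsilon\big)<\varepsilon/3$. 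Setting $B_\varepsilon:=\{u\in L^{\alpha}([0,T];\mathbb{V}):\int_0^T\|u_t\|_{\mathbb{V}}^{\alpha}dt\le R_\varepsilon\}$ and $K_\varepsilon:=K^1_\varepsilon\cap K^2_\varepsilon\cap B_\varepsilon$, the intersection being understood inside $\mathcal{Z}^1_T$, a union bound gives $\sup_{n}\mathbb{P}(X^{(n)}\notin K_\varepsilon)\le\varepsilon$, so everything reduces to showing that $K_\varepsilon$ is compact in $(\mathcal{Z}^1_T,\tau_{\mathcal{Z}^1_T})$.

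For the compactness I would argue sequentially. Since $\alpha>1$ and $\mathbb{V}$ is a separable reflexive Banach space, $L^{\alpha}([0,T];\mathbb{V})$ is separable and reflexive, hence $B_\varepsilon$ is compact in $L^{\alpha}_w([0,T];\mathbb{V})$ and, being bounded with $(L^{\alpha}([0,T];\mathbb{V}))^{*}$ separable, metrizable in the weak topology; together with the metrizability of $\mathbb{C}_T(\mathbb{V}^*)$ and of $L^{\alpha}([0,T];\mathbb{H})$, the diagonal embedding $u\mapsto(u,u,u)$ shows that $\tau_{\mathcal{Z}^1_T}$ restricted to $K_\varepsilon$ is metrizable, so it suffices to verify sequential compactness. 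Given $(u^k)\subset K_\varepsilon$, by passing to successive subsequences I may assume $u^k\to u$ in $\mathbb{C}_T(\mathbb{V}^*)$ with $u\in K^1_\varepsilon$, $u^k\to v$ in $L^{\alpha}([0,T];\mathbb{H})$ with $v\in K^2_\varepsilon$, and $u^k\rightharpoonup w$ in $L^{\alpha}([0,T];\mathbb{V})$ with $w\in B_\varepsilon$ by weak lower semicontinuity of the norm. The crux is to check that $u=v=w$ as elements of $L^{\alpha}([0,T];\mathbb{V}^*)$: convergence in $\mathbb{C}_T(\mathbb{V}^*)$ gives $u^k_t\to u_t$ in $\mathbb{V}^*$ for every $t$; along a further subsequence $u^k_t\to v_t$ in $\mathbb{H}$, hence in $\mathbb{V}^*$, for a.e.\ $t$, so $u=v$ a.e.; and testing the weak convergence against functions $\psi(\cdot)\xi$ with $\psi\in C_c^\infty((0,T))$ and $\xi\in\mathbb{V}$, using dominated convergence (the $\mathbb{C}_T(\mathbb{V}^*)$-bound) together with the continuous dense embeddings of the Gelfand triple $\mathbb{V}\subset\mathbb{H}\subset\mathbb{V}^*$ and the fact that $\mathbb{V}$ separates points of $\mathbb{V}^*$, identifies $w$ with $u$ a.e. Hence the common limit belongs to $K^1_\varepsilon\cap K^2_\varepsilon\cap B_\varepsilon=K_\varepsilon$ and $u^k$ converges to it in each of the three topologies, i.e.\ in $\tau_{\mathcal{Z}^1_T}$, so $K_\varepsilon$ is sequentially compact, and therefore compact.

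Putting the two parts together, $K_\varepsilon$ is compact in $\mathcal{Z}^1_T$ with $\sup_{n}\mathbb{P}(X^{(n)}\notin K_\varepsilon)\le\varepsilon$, which is exactly tightness of $\{X^{(n)}\}_{n\in\mathbb{N}}$ in $\mathcal{Z}^1_T$. Alternatively, once $K^1_\varepsilon$, $K^2_\varepsilon$ and $B_\varepsilon$ are in hand, the passage from component-wise relative compactness to relative compactness of their intersection in the intersection topology can be quoted from the auxiliary lemma recalled in the Appendix, which would shorten this step considerably. I expect the only genuinely delicate point to be this last topological bookkeeping — handling the non-metrizable weak topology on $L^{\alpha}_w([0,T];\mathbb{V})$ alongside the two metrizable ones and checking that the three subsequential limits coincide a.e.\ on $[0,T]$; the probabilistic content is entirely contained in Lemmas~\ref{lem6}, \ref{lem7} and \ref{lem3.0} and needs nothing new.
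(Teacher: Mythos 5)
Your proof is correct and takes essentially the same route as the paper's one-line argument, which likewise combines Lemmas~\ref{lem6} and \ref{lem7} with the Banach--Alaoglu theorem and the a priori estimate (\ref{apri}); you simply make explicit the compactness of the intersection in $\tau_{\mathcal{Z}^1_T}$ and the identification of the three subsequential limits, which the paper treats as immediate. The only slip is the closing remark about ``the auxiliary lemma recalled in the Appendix'': no such lemma is stated there (the Appendix contains only the Skorokhod theorem and the Borel-space definitions), but since you offer it only as an alternative shortcut, this does not affect the argument.
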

\begin{proof}
Combining Lemmas  \ref{lem3.0}, \ref{lemt} and \ref{lem6}, the assertion follows.
\end{proof}

\vspace{1mm}
The following lemma shows the tightness of $\{\mathcal{A}^{(n)}(\cdot):=\mathcal{A}(\cdot,X^{(n)}_{\cdot})\}_{n\in\mathbb{N}}$  in the space $\mathcal{Z}^2_T$.
\begin{lemma}\label{coro2}
$\{\mathcal{A}^{(n)}(\cdot)\}_{n\in\mathbb{N}}$ is tight in $\mathcal{Z}^2_T$.
\end{lemma}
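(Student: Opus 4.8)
The space $\mathcal{Z}^2_T$ is the Banach space $E:=L^{\frac{\alpha}{\alpha-1}}([0,T];\mathbb{V}^*)$ equipped with its weak topology. Since $\alpha>1$ we have $\frac{\alpha}{\alpha-1}\in(1,\infty)$, and $\mathbb{V}$ (hence $\mathbb{V}^*$) is separable and reflexive, so $E$ is a separable reflexive Banach space. Consequently: (i) by Kakutani's theorem every closed ball $\overline{B}_R:=\{f\in E:\|f\|_E\le R\}$ is weakly compact; (ii) since $E$ is separable, the weak topology restricted to $\overline{B}_R$ is metrizable, so $(\overline{B}_R,\text{weak})$ is a compact metric space; (iii) the Borel $\sigma$-algebra of the weak topology on $E$ coincides with that of the norm topology. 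This is precisely the setting in which the argument already used for the $L^{\alpha}_w([0,T];\mathbb{V})$-component in Lemma~\ref{coro1} can be repeated, now with Lemma~\ref{lem1} in place of the estimate~(\ref{apri}).

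The plan is as follows. First I would note that $\mathcal{A}^{(n)}(\cdot)=\mathcal{A}(\cdot,X^{(n)}_\cdot)$ is a well-defined random element of $\mathcal{Z}^2_T$: since $X^{(n)}$ takes values in the finite-dimensional space $\mathbb{H}_n$, on which $\|\cdot\|_{\mathbb{V}}$ and $\|\cdot\|_{\mathbb{H}}$ are equivalent, the path $t\mapsto X^{(n)}_t$ lies in $C([0,T];\mathbb{V})\subset L^{\alpha}([0,T];\mathbb{V})$ almost surely, so by the growth condition $(\mathbf{A_4})$ one has $\int_0^T\|\mathcal{A}(t,X^{(n)}_t)\|_{\mathbb{V}^*}^{\frac{\alpha}{\alpha-1}}\,dt<\infty$ a.s.; progressive measurability of $t\mapsto\mathcal{A}(t,X^{(n)}_t)$ together with fact (iii) shows that $\mathcal{A}^{(n)}(\cdot)$ is measurable from $\Omega$ into $(\mathcal{Z}^2_T,\mathscr{B})$. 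Then I would invoke Lemma~\ref{lem1}: given $\varepsilon>0$ there exists $\mathcal{K}=\mathcal{K}(\varepsilon)>0$ with
$$\sup_{n\in\mathbb{N}}\mathbb{P}\Big(\|\mathcal{A}^{(n)}(\cdot)\|_{L^{\frac{\alpha}{\alpha-1}}([0,T];\mathbb{V}^*)}^{\frac{\alpha}{\alpha-1}}\ge\mathcal{K}\Big)\le\varepsilon .$$
Setting $R:=\mathcal{K}^{\frac{\alpha-1}{\alpha}}$, the ball $\overline{B}_R$ is weakly compact by (i), while $\mathbb{P}\big(\mathcal{A}^{(n)}(\cdot)\in\overline{B}_R\big)\ge1-\varepsilon$ uniformly in $n$. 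As $\varepsilon>0$ is arbitrary, the family of laws $\{\mathrm{Law}(\mathcal{A}^{(n)}(\cdot))\}_{n\in\mathbb{N}}$ on $\mathcal{Z}^2_T$ is tight.

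I do not expect any genuine analytic difficulty here: the a priori bound in probability provided by Lemma~\ref{lem1} does all the work, and the estimate itself relies only on $(\mathbf{A_4})$ and the bounds from Lemma~\ref{lem3.0}. The only points that require care are the soft topological and measure-theoretic facts for the separable reflexive space $L^{\frac{\alpha}{\alpha-1}}([0,T];\mathbb{V}^*)$ — weak compactness of closed balls, metrizability of the weak topology on such balls (so that tightness is meaningful in the form needed for the later application of Jakubowski's theorem), and the coincidence of the weak and norm Borel $\sigma$-algebras. These are exactly the ingredients already exploited for the weak-$\mathbb{V}$ component of $\mathcal{Z}^1_T$, so the substantive obstacle, if any, was overcome already in Lemma~\ref{lem1}.
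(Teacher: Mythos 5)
Your proposal is correct and follows essentially the same route as the paper: the paper's one-line argument invokes the Banach--Alaoglu theorem (equivalently, Kakutani's weak-compactness characterization for the reflexive separable space $L^{\frac{\alpha}{\alpha-1}}([0,T];\mathbb{V}^*)$) together with the uniform probability bound \eref{apriA} from Lemma~\ref{lem1}, which is exactly the mechanism you describe. Your write-up merely spells out the soft topological ingredients (metrizability on balls, coincidence of Borel $\sigma$-algebras) that the paper leaves implicit in its ``similarly to Lemma~\ref{coro1}'' remark.
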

\begin{proof}
The tightness  of $\{\mathcal{A}^{(n)}(\cdot)\}_{n\in\mathbb{N}}$  in $\mathcal{Z}^2_T$ follows directly from the Banach-Alaoglu theorem and the estimate (\ref{apriA}).
\end{proof}

\subsection{Passage to the limit}\label{sec2.3}
In this part, we aim to prove Theorems \ref{th1}, where the Jakubowski's beautiful generalization of the Skorokhod's representation theorem, in the form presented by Brze\'{z}niak and Ondrej\'{a}t, for nonmetric spaces (see Lemma \ref{sko1} in Appendix) and the theory of pseudo-monotone operators play a crucial role.

In order to apply Jakubowski's version of the Skorokhod theorem, we first verify that $\mathcal{Z}_T^1$ is a standard Borel space (see Appendix for the definition), whose proof is inspired by \cite{Liang}.
\begin{lemma}\label{thsb}
$\mathcal{Z}_T^1$ is a standard Borel space.
\end{lemma}

\begin{proof}
Let $\mathcal{Y}:=L^2([0,T];\mathbb{V}^*)$ with Borel $\sigma$-algebra $\mathscr{B}(\mathcal{Y})$. In order to show the assertion, according to Theorem 2.3 of Chapter V in \cite{P67} and the fact that $\mathcal{Y}$ is a standard Borel space, it is sufficient to show the following three claims:

\vspace{1mm}
\noindent(i) the embedding $\mathcal{Z}_T^1\subset \mathcal{Y}$ is continuous;

\vspace{1mm}
\noindent(ii)   $\mathcal{Z}_T^1\in \mathscr{B}(\mathcal{Y})$;

\vspace{1mm}
\noindent(iii) $\mathscr{B}(\tau_{\mathcal{Z}^1_T})=\mathscr{B}(\mathcal{Y}) \cap \mathcal{Z}^1_T$.

\vspace{1mm}
First, the claim (i) follows directly from $\mathbb{C}_T(\mathbb{V}^*)\subset \mathcal{Y}$ continuously. Next, we focus on proving claims (ii) and (iii).

\vspace{1mm}
\noindent\textbf{Proof of (ii).} By claim (i) we have
\begin{equation}\label{a1}
\mathscr{B}(\mathcal{Y}) \cap \mathcal{Z}^1_T\subset \mathscr{B}(\tau_{\mathcal{Z}^1_T}).
\end{equation}
Fix $N\in\mathbb{N}$. Let us denote
$$(L^{\alpha}_N([0,T];\mathbb{V}))_w:=\Big\{x\in L^{\alpha}([0,T];\mathbb{V})\big| \|x\|_{L^{\alpha}([0,T];\mathbb{V})}\leq N\Big\},$$
which is endowed with the weak topology on
$L^{\alpha}([0,T];\mathbb{V})$. Then $(L^{\alpha}_N([0,T];\mathbb{V}))_w$ is a compact and metrizable, hence complete and separable, space with metric $d_1$.
Fix $N\in\mathbb{N}$. Set
$$\mathcal{Z}_T^{(N)}:=(L^{\alpha}_N([0,T];\mathbb{V}))_w\cap \mathbb{C}_T(\mathbb{V}^*)\cap L^{\alpha}([0,T];\mathbb{H}),$$
which is a closed subset of $\mathcal{Z}^1_T$. The metrics on   $\mathbb{C}_T(\mathbb{V}^*)$ and $L^{\alpha}([0,T];\mathbb{H})$ are denoted by $d_2$ and $d_3$, respectively. Now, let $\mathcal{Z}_T^{(N)}$ be endowed with the metric $d:=\max\{d_1,d_2,d_3\}$. Since the intersection of finite separable metric space (with the maximal metric) is a separable metric space, it follows that $\mathcal{Z}_T^{(N)}$ is a separable metric space. We intend to show that $\mathcal{Z}_T^{(N)} $ is complete. To this end, it is sufficient to show that for a sequence $\{x_k\}_{k\in\mathbb{N}}$ converging to $x^{(i)}$ in $d_i$, $i=1,2,3$,  we have
\begin{equation}\label{a2}
 x^{(1)}=x^{(2)}=x^{(3)}.
 \end{equation}
Since $x_k\rightharpoonup x^{(1)}$ and $x_k\rightharpoonup x^{(3)}$ both in $L^{\alpha}([0,T];\mathbb{H})$,  it follows that
$$x^{(1)}=x^{(3)}.$$
In addition, $x_k\to x^{(2)}$ and $x_k\to x^{(3)}$ both in $L^{\alpha}([0,T];\mathbb{V}^*)$, we can deduce that
$$x^{(2)}=x^{(3)}.$$
Hence, (\ref{a2}) follows, then  $\mathcal{Z}_T^{(N)}$ is a complete separable metric space. Furthermore, the following embeddings are continuous
\begin{equation}\label{a4}
(\mathcal{Z}_T^{(N)},d)\subset (\mathcal{Z}^1_T,\tau_{\mathcal{Z}^1_T})\subset \mathcal{Y}.
 \end{equation}
Therefore, in view of  Theorem 2.4 of Chapter V in \cite{P67} we can obtain
\begin{equation}\label{a5}
\mathscr{B}((\mathcal{Z}_T^{(N)},d))\subset \mathscr{B}(\mathcal{Y}).
 \end{equation}
Moreover, by (\ref{a4}) we have
\begin{equation}\label{a6}
\mathscr{B}(\mathcal{Y}) \cap \mathcal{Z}_T^{(N)}\subset \mathscr{B}((\mathcal{Z}_T^{(N)},d)).
 \end{equation}
Combining (\ref{a5})-(\ref{a6}) we obtain
\begin{equation}\label{a3}
\mathscr{B}((\mathcal{Z}_T^{(N)},d))=\mathscr{B}(\mathcal{Y})\cap \mathcal{Z}_T^{(N)},
 \end{equation}
which yields that
$$\mathcal{Z}^1_T=\bigcup_{N=1}^{\infty}\mathcal{Z}_T^{(N)}\in \mathscr{B}(\mathcal{Y}).$$
The proof of claim (ii) is completed.

\vspace{1mm}
\noindent\textbf{Proof of (iii).}  Since $\mathcal{Z}_T^{(N)}$ is the closed subset of $\mathcal{Z}_T^1$, it is clear that $\mathcal{Z}_T^{(N)}\in \mathscr{B}(\tau_{\mathcal{Z}^1_T})$. Then we can obtain
 $$\mathscr{B}(\tau_{\mathcal{Z}^1_T})\cap \mathcal{Z}_T^{(N)}=\Big\{B\in\mathscr{B}(\tau_{\mathcal{Z}^1_T})|B\subset \mathcal{Z}_T^{(N)}\Big\}$$
and
 $$\mathscr{B}(\tau_{\mathcal{Z}^1_T})=\bigcup_{N=1}^{\infty}\Big\{B\in\mathscr{B}(\tau_{\mathcal{Z}^1_T})|B\subset \mathcal{Z}_T^{(N)}\Big\}.$$
Moreover,  the embedding $(\mathcal{Z}_T^{(N)},d)\subset(\mathcal{Z}_T^{(N)},\tau_{\mathcal{Z}^1_T}\cap \mathcal{Z}_T^{(N)})$ is continuous, which implies
\begin{equation}\label{ap01}
\mathscr{B}(\tau_{\mathcal{Z}^1_T}\cap \mathcal{Z}_T^{(N)})\subset\mathscr{B}((\mathcal{Z}_T^{(N)},d)).
\end{equation}
Suppose that $A$  is an $\tau_{\mathcal{Z}^1_T}$-closed subset of $\mathcal{Z}^1_T$. Then $A\cap \mathcal{Z}_T^{(N)}$ is $\tau_{\mathcal{Z}^1_T}$-closed. Then by \eref{a3} and \eref{ap01}
we have
 \begin{eqnarray}\label{ap02}
A\cap \mathcal{Z}_T^{(N)}\in\!\!\!\!\!\!\!\!&&\mathscr{B}(\tau_{\mathcal{Z}^1_T}\cap \mathcal{Z}_T^{(N)})
\nonumber \\
\subset\!\!\!\!\!\!\!\!&&\mathscr{B}((\mathcal{Z}_T^{(N)},d))
\nonumber \\
=\!\!\!\!\!\!\!\!&&\mathscr{B}(\mathcal{Y})\cap \mathcal{Z}_T^{(N)}.
\end{eqnarray}
Note that $ \mathcal{Z}_T^{(N)}\in\mathscr{B}(\mathcal{Y})$,  in view of claim (ii) we can get
$$\mathscr{B}(\mathcal{Y})\cap \mathcal{Z}_T^{(N)}=\Big\{B\in\mathscr{B}(\mathcal{Y})|B\subset \mathcal{Z}_T^{(N)}\Big\}\subset\Big\{B\in\mathscr{B}(\mathcal{Y})|B\subset \mathcal{Z}^1_T\Big\}=\mathscr{B}(\mathcal{Y})\cap\mathcal{Z}^1_T.$$
Then by \eref{ap02} we have
$$A=\bigcup_{N=1}^{\infty}A\cap \mathcal{Z}_T^{(N)}\in\mathscr{B}(\mathcal{Y})\cap\mathcal{Z}^1_T.$$
Thus, $\mathscr{B}(\tau_{\mathcal{Z}^1_T})\subset\mathscr{B}(\mathcal{Y})\cap\mathcal{Z}^1_T$, which together with \eref{a1}
implies $\mathscr{B}(\tau_{\mathcal{Z}^1_T})=\mathscr{B}(\mathcal{Y})\cap\mathcal{Z}^1_T$. The proof is completed.
\end{proof}

Now, we need to check that the spaces $\mathcal{Z}^1_T$, $\mathcal{Z}^2_T$ satisfy the conditions in Lemma \ref{sko1}. In fact, since $\mathbb{C}_T(\mathbb{V}^*)$ and  $L^{\alpha}([0,T];\mathbb{H})$ are separable Banach spaces, it is easy to see the conditions in Lemma \ref{sko1} hold. Furthermore, for the space $L^{\alpha}_w([0,T];{\mathbb{V}})$, it suffices to put
$$f_m(u):=\int_0^T{}_{\mathbb{V}^*}\langle v_m(t),u(t)\rangle_{\mathbb{V}}dt\in\mathbb{R},~u\in L^{\alpha}_w([0,T];{\mathbb{V}}),~m\in\mathbb{N},$$
where $\{v_m\}_{m\geq 1}$ is a dense subset of $L^{\frac{\alpha}{\alpha-1}}([0,T];{\mathbb{V}^*})$. Since $\{v_m\}_{m\geq 1}$ is dense in $L^{\frac{\alpha}{\alpha-1}}([0,T];{\mathbb{V}^*})$, it separates points of $L^{\alpha}([0,T];{\mathbb{V}})$.
Moreover, since $(\mathcal{Z}^1_T,\mathscr{B}(\tau_{\mathcal{Z}^1_T}))$  is a standard Borel space (i.e.~Lemma  \ref{thsb}), the $\sigma$-algebra generated by the sequence of the above continuous functions,
which separate the points in $\mathcal{Z}^1_T$, is exactly $\mathscr{B}(\tau_{\mathcal{Z}^1_T})$ by Theorem \ref{tha1}. Thus all the conditions in Lemma \ref{sko1} are satisfied for $\mathcal{Z}^1_T$. From the same reason, we deduce that the conditions in Lemma \ref{sko1} also hold for $\mathcal{Z}^2_T$.

Set
$$\Xi_T:=\mathcal{Z}^1_T\times\mathcal{Z}^2_T\times \mathbb{C}_T(U_1),$$
where $U_1$ is a Hilbert space such that the embedding $U\subset U_1$ is Hilbert-Schmidt.
Since $\{X^{(n)}\}_{n\in\mathbb{N}}$, $\{\mathcal{A}^{(n)}(\cdot)\}_{n\in\mathbb{N}}$  are tight in $\mathcal{Z}^1_T$, $\mathcal{Z}^2_T$, respectively, it is clear that $\{(X^{(n)},\mathcal{A}^{(n)}(\cdot),W)\}_{n\in\mathbb{N}}$ is also tight in $\Xi_T$.
By the Jakubowski's version of the Skorokhod theorem, there exists a probability space $(\tilde{\Omega},\tilde{\mathscr{F}},\tilde{\mathbb{P}})$, and on this space, $\Xi_T$-valued random variables $$(\tilde{X}^{(n)},\tilde{\mathcal{A}}^{(n)}(\cdot),\tilde{W}^{(n)}), ~(\tilde{X},\tilde{\mathcal{A}}(\cdot),\tilde{W})$$
(here choosing a subsequence if necessary) such that

\vspace{2mm}
\noindent(i) the law of $(\tilde{X}^{(n)},\tilde{\mathcal{A}}^{(n)}(\cdot),\tilde{W}^{(n)})$ under $\tilde{\mathbb{P}}$ is equivalent to the law of $(X^{(n)},\mathcal{A}^{(n)}(\cdot),W)$ under $\mathbb{P}$;

\vspace{2mm}
\noindent(ii) the following convergence hold
\begin{eqnarray}
\!\!\!\!\!\!\!\!&&\tilde{X}^{(n)}\to \tilde{X}~\text{in}~\mathcal{Z}^1_T,~~\tilde{\mathbb{P}}\text{-a.s.},~\text{as}~n\to\infty,\label{es80}
 \\
\!\!\!\!\!\!\!\!&&\tilde{\mathcal{A}}^{(n)}(\cdot)\rightharpoonup \tilde{\mathcal{A}}(\cdot)~\text{in}~L^{\frac{\alpha}{\alpha-1}}([0,T];\mathbb{V}^*),~~\tilde{\mathbb{P}}\text{-a.s.},~\text{as}~n\to\infty,\label{con10}
\end{eqnarray}
where ``$\rightharpoonup$'' stands for the weak convergence.

\vspace{2mm}
\noindent(iii) $\tilde{W}^{(n)}\to\tilde{W}$ in $\mathbb{C}_T(U_1)$, $\tilde{\mathbb{P}}$-a.s..

\vspace{2mm}
 Let $(\tilde{\mathscr{F}}^{(n)}_t)_{t\in[0,T]}$ be the filtration satisfying the usual conditions and generated by
 $\Big\{\tilde{X}^{(n)}_s,\tilde{W}^{(n)}_s:s\in[0,t]\Big\}.$
 We note that by the claim (i),
 \begin{eqnarray*}
\!\!\!\!\!\!\!\!&&\mathbb{P}(W_t-W_s\in\cdot|\mathscr{F}_s)=\mathbb{P}(W_t-W_s\in\cdot)
 \\
\Rightarrow\!\!\!\!\!\!\!\!&&\tilde{\mathbb{P}}(\tilde{W}^{(n)}_t-\tilde{W}^{(n)}_s\in\cdot|\tilde{\mathscr{F}}^{(n)}_s)=\tilde{\mathbb{P}}(\tilde{W}^{(n)}_t-\tilde{W}^{(n)}_s\in\cdot).
\end{eqnarray*}
In other word, $\tilde{W}^{(n)}$ is an $(\tilde{\mathscr{F}}^{(n)}_t)$-cylindrical Wiener process on $U$.  Moreover, from (\ref{eqf}) and the claim (i),  the following identity holds $\tilde{\mathbb{P}}$-a.s.
 \begin{eqnarray}\label{eqf2}
\tilde{X}^{(n)}_t
=\!\!\!\!\!\!\!\!&& x^{(n)}+\int_0^t\mathcal{P}_n\mathcal{A}(s,\tilde{X}^{(n)}_s)ds
\nonumber \\
\!\!\!\!\!\!\!\!&&+\int_0^t\mathcal{P}_n\mathcal{B}(s,\tilde{X}^{(n)}_s)\tilde{\mathcal{P}}_nd\tilde{W}^{(n)}_s,~t\in[0,T].
\end{eqnarray}
By the claim (i), the convergence (\ref{con10}) and the fact that the law of $\mathcal{A}(\cdot,\tilde{X}^{(n)}_{\cdot})$ under $\tilde{\mathbb{P}}$ is equivalent to the law of $\mathcal{A}^{(n)}(\cdot)$ under $\mathbb{P}$, we can infer that (at least) along a subsequence still denoted by $\{n\}$ we have
\begin{equation}\label{con1}
\mathcal{A}(\cdot,\tilde{X}^{(n)}_{\cdot})\rightharpoonup \tilde{\mathcal{A}}(\cdot)~\text{in}~L^{\frac{\alpha}{\alpha-1}}([0,T];\mathbb{V}^*),~~\tilde{\mathbb{P}}\text{-a.s.},~\text{as}~n\to\infty.
\end{equation}

In addition, by Lemmas \ref{lemma4.10} and \ref{lem3.0}, we can obtain  same bounds  hold for $ \tilde{X}^{(n)}$. More precisely, we have
\begin{equation*}\label{apri4}
		\sup_{n\in\mathbb{N}}\bigg\{\sup_{t\in[0,T]}\tilde{\mathbb{E}}\|\tilde{X}^{(n)}_t\|_{\mathbb{H}}^{2-\eta_0}+\tilde{\mathbb{E}}\int_0^T\|\tilde{X}^{(n)}_t\|_{\mathbb{V}}^{\alpha-\eta_0}dt\bigg\}
		\leq C_T(1+\|x\|_{\mathbb{H}}^{2-\eta_0}),
	\end{equation*}
and
for any $\varepsilon>0$ there exists  $\mathcal{K}>0$ such that for any $p\geq 2$,
\begin{equation}
\sup_{n\in\mathbb{N}}\tilde{\mathbb{P}}\Bigg(\sup_{t\in[0,T]}\|\tilde{X}^{(n)}_t\|_{\mathbb{H}}^p+\int_0^T\|\tilde{X}^{(n)}_t\|_{\mathbb{V}}^{\alpha}dt\geq \mathcal{K}\Bigg)\leq\varepsilon. \label{apri1}
\end{equation}
Using the lower semicontinuity of norms $\|\cdot\|_{\mathbb{H}}$, $\|\cdot\|_{\mathbb{V}}$ in $\mathbb{V}^*$, by the convergence (\ref{es80}) and Fatou's lemma we can deduce
\begin{eqnarray}\label{apri5}
		\!\!\!\!\!\!\!\!&&\sup_{t\in[0,T]}\tilde{\mathbb{E}}\|\tilde{X}_t\|_{\mathbb{H}}^{2-\eta_0}+\tilde{\mathbb{E}}\int_0^T\|\tilde{X}_t\|_{\mathbb{V}}^{\alpha-\eta_0}dt
\nonumber \\
\leq\!\!\!\!\!\!\!\!&&\sup_{t\in[0,T]}\tilde{\mathbb{E}}\liminf_{n\to\infty}\|\tilde{X}^{(n)}_t\|_{\mathbb{H}}^{2-\eta_0}+\tilde{\mathbb{E}}\int_0^T\liminf_{n\to\infty}\|\tilde{X}^{(n)}_t\|_{\mathbb{V}}^{\alpha-\eta_0}dt
\nonumber \\
\leq\!\!\!\!\!\!\!\!&&\liminf_{n\to\infty}\sup_{t\in[0,T]}\tilde{\mathbb{E}}\|\tilde{X}^{(n)}_t\|_{\mathbb{H}}^{2-\eta_0}+\liminf_{n\to\infty}\tilde{\mathbb{E}}\int_0^T\|\tilde{X}^{(n)}_t\|_{\mathbb{V}}^{\alpha-\eta_0}dt
\nonumber \\
\leq\!\!\!\!\!\!\!\!&&
 C_T(1+\|x\|_{\mathbb{H}}^{2-\eta_0}),
	\end{eqnarray}
and
for any $\varepsilon>0$  we can choose $\mathcal{K}$ such that
\begin{eqnarray}\label{apri2}
\!\!\!\!\!\!\!\!&&\tilde{\mathbb{P}}\Bigg(\sup_{t\in[0,T]}\|\tilde{X}_t\|_{\mathbb{H}}^p+\int_0^T\|\tilde{X}_t\|_{\mathbb{V}}^{\alpha}dt\geq \mathcal{K}\Bigg)
\nonumber \\
\leq\!\!\!\!\!\!\!\!&&\tilde{\mathbb{P}}\Bigg(\sup_{t\in[0,T]}\liminf_{n\to\infty}\|\tilde{X}^{(n)}_t\|_{\mathbb{H}}^p+\int_0^T\liminf_{n\to\infty}\|\tilde{X}^{(n)}_t\|_{\mathbb{V}}^{\alpha}dt\geq \mathcal{K}\Bigg)
\nonumber \\
\leq\!\!\!\!\!\!\!\!&&\tilde{\mathbb{P}}\Bigg(\liminf_{n\to\infty}\Big\{\sup_{t\in[0,T]}\|\tilde{X}^{(n)}_t\|_{\mathbb{H}}^p+\int_0^T\|\tilde{X}^{(n)}_t\|_{\mathbb{V}}^{\alpha}dt\Big\}\geq \mathcal{K}\Bigg)
\nonumber \\
\leq\!\!\!\!\!\!\!\!&&\sup_{n\in\mathbb{N}}\tilde{\mathbb{P}}\Bigg(\sup_{t\in[0,T]}\|\tilde{X}^{(n)}_t\|_{\mathbb{H}}^p+\int_0^T\|\tilde{X}^{(n)}_t\|_{\mathbb{V}}^{\alpha}dt\geq \mathcal{K}\Bigg)
\nonumber \\
\leq\!\!\!\!\!\!\!\!&&\varepsilon.
\end{eqnarray}

In the sequel, we investigate the convergence of (\ref{eqf2}), as $n\to\infty$, where we can choose a subsequence if necessary.

\vspace{1mm}
In the following lemma, we present the convergence of the stochastic integral in (\ref{eqf2}).
 \begin{lemma}\label{lem4}
$\int_0^{\cdot} \mathcal{P}_n\mathcal{B}(s,\tilde{X}^{(n)}_s)\tilde{\mathcal{P}}_nd\tilde{W}^{(n)}_s\to\int_0^{\cdot} \mathcal{B}(s,\tilde{X}_s)d\tilde{W}_s$ in $L^{\infty}([0,T];\mathbb{H})$ in probability as $n\to\infty$.
 \end{lemma}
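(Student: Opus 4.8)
The plan is to establish the convergence in two stages: first a deterministic (pathwise) convergence of the ``truncated'' stochastic integrals along stopping times, and then upgrade this to convergence in probability on the whole interval $[0,T]$ by controlling the exit probabilities of the stopping times uniformly in $n$. Recall the stopping time $\tau_M^{(n)}$ from \eref{stop1}; by Lemma \ref{lem3.0} (transferred to $\tilde{X}^{(n)}$ via \eref{apri1}) together with the estimate \eref{apri2} for $\tilde X$, for any $\varepsilon>0$ we may pick $M$ large so that $\tilde{\mathbb{P}}(\tau_M^{(n)}<T)<\varepsilon$ uniformly in $n$, and also $\tilde{\mathbb{P}}(\tilde\tau_M<T)<\varepsilon$ where $\tilde\tau_M$ is the analogous stopping time for $\tilde X$. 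On the event $\{\tau_M^{(n)}\wedge\tilde\tau_M\geq T\}$, the processes $\tilde X^{(n)}$ and $\tilde X$ stay in a ball of radius $M$ in the norm controlling $\|\cdot\|_{\mathbb H}$ and in an $L^\alpha([0,T];\mathbb V)$-ball of radius $M$.

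Next, on the stopped interval, I would estimate
\begin{align*}
&\tilde{\mathbb{E}}\sup_{t\in[0,T]}\Big\|\int_0^{t\wedge\sigma_M^{(n)}}\!\big(\mathcal{P}_n\mathcal{B}(s,\tilde X^{(n)}_s)\tilde{\mathcal P}_n-\mathcal{B}(s,\tilde X_s)\big)d\tilde W_s\Big\|_{\mathbb H}^2\\
&\qquad\leq C\,\tilde{\mathbb{E}}\int_0^{T\wedge\sigma_M^{(n)}}\!\big\|\mathcal{P}_n\mathcal{B}(s,\tilde X^{(n)}_s)\tilde{\mathcal P}_n-\mathcal{B}(s,\tilde X_s)\big\|_{L_2(U,\mathbb H)}^2\,ds,
\end{align*}
where $\sigma_M^{(n)}:=\tau_M^{(n)}\wedge\tilde\tau_M$, using the Burkholder--Davis--Gundy inequality (this is legitimate since on the stopped interval the integrand has finite second moment by \eref{conb2} and the bound $M$). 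I would then split the integrand into $\mathcal{P}_n(\mathcal{B}(s,\tilde X^{(n)}_s)-\mathcal{B}(s,\tilde X_s))\tilde{\mathcal P}_n$ plus $(\mathcal{P}_n\mathcal{B}(s,\tilde X_s)\tilde{\mathcal P}_n-\mathcal{B}(s,\tilde X_s))$. The second term tends to $0$ in $L_2(U,\mathbb H)$ for a.e.\ $s$ because $\mathcal{P}_n\to I$ strongly on $\mathbb H$, $\tilde{\mathcal P}_n\to I$ strongly on $U$, and Hilbert--Schmidt operators are approximated in norm by such cut-offs; combined with the domination $\|\mathcal{B}(s,\tilde X_s)\|_{L_2}^2\leq C(1+M^\beta)$ on the stopped set and dominated convergence, its contribution vanishes. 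For the first term, from \eref{es80} we have $\tilde X^{(n)}\to\tilde X$ in $L^\alpha([0,T];\mathbb H)$ $\tilde{\mathbb P}$-a.s., so along a further subsequence $\|\tilde X^{(n)}_s-\tilde X_s\|_{\mathbb H}\to 0$ for a.e.\ $s$; then \eref{conb3} gives $\|\mathcal{B}(s,\tilde X^{(n)}_s)-\mathcal{B}(s,\tilde X_s)\|_{L_2(U,\mathbb H)}\to 0$ for a.e.\ $s$, and the same uniform bound $C(1+M^\beta)$ on the stopped interval plus dominated convergence kills this term too. Hence the stopped stochastic integrals converge to $0$ in $L^2(\tilde\Omega;L^\infty([0,T];\mathbb H))$, so in particular in probability.

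Finally I would glue the pieces: for any $\delta>0$,
\begin{align*}
&\tilde{\mathbb{P}}\Big(\sup_{t\in[0,T]}\Big\|\int_0^t\mathcal{P}_n\mathcal{B}(s,\tilde X^{(n)}_s)\tilde{\mathcal P}_n d\tilde W_s-\int_0^t\mathcal{B}(s,\tilde X_s)d\tilde W_s\Big\|_{\mathbb H}>\delta\Big)\\
&\qquad\leq \tilde{\mathbb{P}}\big(\sigma_M^{(n)}<T\big)+\tilde{\mathbb{P}}\Big(\sup_{t\in[0,T]}\Big\|\int_0^{t\wedge\sigma_M^{(n)}}(\cdots)d\tilde W_s\Big\|_{\mathbb H}>\delta\Big),
\end{align*}
and letting $n\to\infty$ (using the stopped convergence just proved) then $M\to\infty$ (using the uniform exit bounds) gives the result; a subsequence argument then removes the restriction to a subsequence in the usual way since the limit is identified. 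The main obstacle, and the point requiring the most care, is the a.e.-in-$s$ domination needed to apply dominated convergence for the stochastic-integral estimate: one must introduce the stopping times \emph{before} invoking BDG so that the $L_2(U,\mathbb H)$-norm of the diffusion coefficient is bounded by a constant depending only on $M$, which is exactly why \eref{conb2} (the $\beta$-growth bound) rather than \eref{conb1} is used here, and why the tightness/uniform-bound estimate \eref{apri2} for the limit process $\tilde X$ is essential. Handling the Galerkin projections $\mathcal{P}_n,\tilde{\mathcal P}_n$ simultaneously with the nonlinear convergence \eref{conb3} is a secondary technical nuisance but is dispatched by the triangle-inequality split above.
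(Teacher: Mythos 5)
Your argument is correct, but it takes a genuinely different route from the paper. The paper first invokes a Gy\"{o}ngy-type result (Lemma 5.2 of \cite{GM}, or Lemma 4.3 of \cite{BMX}) to reduce the claim to showing $\int_0^T \|\mathcal{P}_n\mathcal{B}(s,\tilde{X}^{(n)}_s)\tilde{\mathcal{P}}_n-\mathcal{B}(s,\tilde{X}_s)\|_{L_2(U,\mathbb{H})}^2\,ds\to 0$ in probability — this is the standard ``local martingale'' version of the BDG-type bound, which needs no moment information — and then handles the lack of uniform integrability by a truncation argument: it introduces a cut-off $\chi_R\in C_c^\infty(\mathbb{R})$ in the $\mathbb{H}$-norm of the argument of $\mathcal{B}$, proves a.s.\ convergence of the truncated functionals $\Psi_R$ by dominated convergence, and then removes the truncation by showing $|\Psi-\Psi_R|\to 0$ in probability uniformly in $n$ via the tightness bound (\ref{apri1}). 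You instead work directly with the stopping times $\sigma_M^{(n)}=\tau_M^{(n)}\wedge\tilde\tau_M$, apply BDG in $L^2$ on the stopped interval (legitimate because (\ref{conb2}) plus the $M$-bound gives finite second moments there), push the difference to zero by dominated convergence, and then glue by $\tilde{\mathbb{P}}(\sigma_M^{(n)}<T)\to 0$. Both routes use the same inputs — (\ref{es80}), (\ref{conb3}), (\ref{conb2}), (\ref{apri1})/(\ref{apri2}), and the strong convergence of the projections $\mathcal{P}_n,\tilde{\mathcal{P}}_n$ — and the same final diagonal/subsequence step. What the paper's approach buys is that it avoids any need to make the stochastic integral itself square-integrable, since the Gy\"{o}ngy lemma works purely in probability; your stopping-time approach is a more elementary, self-contained localization that does not need to cite that lemma, at the cost of having to be slightly careful that the integrand cut off at $\sigma_M^{(n)}$ is dominated by the constant $C(1+M^\beta)$ on $[0,T]\times\tilde\Omega$ so that dominated convergence applies even though $\sigma_M^{(n)}$ varies with $n$ — a point you should spell out, since $g_n(s):=\|\cdots\|_{L_2}^2$ itself is not dominated off the stopped interval; only $\mathbf{1}_{\{s<\sigma_M^{(n)}\}}\,g_n(s)$ is.
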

\begin{proof}
According to the characterization of the convergence in probability for stochastic integral as presented in Lemma 4.3 in \cite{BMX}, it suffices to show that for any $t\in[0,T]$,
\begin{equation}
\int_0^t \|\mathcal{P}_n\mathcal{B}(s,\tilde{X}^{(n)}_s)\tilde{\mathcal{P}}_n-\mathcal{B}(s,\tilde{X}_s)\|_{L_2(U,\mathbb{H})}^2ds\to 0\label{con3}
\end{equation}
in probability as $n\to\infty$.

Note that by (\ref{es80}) we know
\begin{equation}\label{e21}
\int_0^T\|\tilde{X}^{(n)}_t-\tilde{X}_t\|_{\mathbb{H}}^{\kappa}dt\to 0,~~\tilde{\mathbb{P}}\text{-a.s.},~\forall \kappa\in[1,\alpha).
\end{equation}
Then, there exists a $\tilde{\mathbb{P}}\otimes dt$-null set $\mathcal{N}$ such that for any $(t,\omega)\in (\Omega\times[0,T])\backslash \mathcal{N}$, along a subsequence still denoted by $\{n\}$ we have
\begin{equation}\label{e27}
\|\tilde{X}^{(n)}_t(\omega)-\tilde{X}_t(\omega)\|_{\mathbb{H}}\to 0~\text{as}~n\to\infty.
\end{equation}
Moreover, we know
\begin{eqnarray*}
\!\!\!\!\!\!\!\!&&\|\mathcal{P}_n\mathcal{B}(s,\tilde{X}^{(n)}_s)\tilde{\mathcal{P}}_n-\mathcal{B}(s,\tilde{X}_s)\|_{L_2(U,\mathbb{H})}^2
\nonumber \\
\leq\!\!\!\!\!\!\!\!&& C\|\mathcal{P}_n\big(\mathcal{B}(s,\tilde{X}^{(n)}_s)-\mathcal{B}(s,\tilde{X}_s)\big)\tilde{\mathcal{P}}_n\|_{L_2(U,\mathbb{H})}^2
+C\|\mathcal{P}_n\mathcal{B}(s,\tilde{X}_s)\tilde{\mathcal{P}}_n-\mathcal{B}(s,\tilde{X}_s)\|_{L_2(U,\mathbb{H})}^2
\nonumber \\
\leq\!\!\!\!\!\!\!\!&& C\|\mathcal{B}(s,\tilde{X}^{(n)}_s)-\mathcal{B}(s,\tilde{X}_s)\|_{L_2(U,\mathbb{H})}^2
+C\|\mathcal{P}_n\mathcal{B}(s,\tilde{X}_s)\tilde{\mathcal{P}}_n-\mathcal{B}(s,\tilde{X}_s)\|_{L_2(U,\mathbb{H})}^2
\nonumber \\
=:\!\!\!\!\!\!\!\!&& I^{(n)}_s+II^{(n)}_s.
\end{eqnarray*}
In what follows, we only focus on the convergence of $I^{(n)}$, since the convergence of $II^{(n)}$ follows directly from the property of orthonormal projections.

\vspace{1mm}
We intend to prove
\begin{equation*}
\int_0^tI^{(n)}_sds\to 0~~\text{in probability}~\text{as}~n\to\infty.
\end{equation*}
In light of the continuity condition (\ref{con3}) and the convergence (\ref{e27}), it suffices to show
\begin{equation}\label{e26}
\int_0^t\|\mathcal{B}(s,\tilde{X}^{(n)}_s)\|_{L_2(U,\mathbb{H})}^2ds\to \int_0^t\|\mathcal{B}(s,\tilde{X}_s)\|_{L_2(U,\mathbb{H})}^2ds
\end{equation}
in probability as $n\to\infty$.

Let $\chi_R\in C^{\infty}_c(\mathbb{R})$ be a cut-off function with
$$\chi_R(r)=\begin{cases} 1,~~~~|r|\leq R&\quad\\
0,~~~~|r|>2R.&\quad\end{cases}$$
Set
\begin{eqnarray*}
\!\!\!\!\!\!\!\!&&\Psi_R(t,w):=\int_0^t\|\mathcal{B}(s,w_s)\|_{L_2(U,\mathbb{H})}^2\chi_R(\|w_s\|_{\mathbb{H}})ds,
\nonumber \\
\!\!\!\!\!\!\!\!&& \Psi(t,w):=\int_0^t\|\mathcal{B}(s,w_s)\|_{L_2(U,\mathbb{H})}^2ds.
\end{eqnarray*}
On the one hand, by (\ref{conb3}), (\ref{e27}) and the continuity of $\chi_R$   we have as $n\to \infty$
$$\Big|\|\mathcal{B}(s,\tilde{X}^{(n)}_s)\|_{L_2(U,\mathbb{H})}^2\chi_R(\|\tilde{X}^{(n)}_s\|_{\mathbb{H}})-
\|\mathcal{B}(s,\tilde{X}_s)\|_{L_2(U,\mathbb{H})}^2\chi_R(\|\tilde{X}_s\|_{\mathbb{H}})\Big|\to0,~~\tilde{\mathbb{P}}\otimes dt\text{-a.e.},$$
which combining with the dominated convergence theorem yields
\begin{equation}\label{e28}
\Psi_R(t,\tilde{X}^{(n)})\to \Psi_R(t,\tilde{X}),~~\tilde{\mathbb{P}}\text{-a.s.},~\text{as}~n\to\infty.
\end{equation}
On the other hand, by the definition of $\chi_R$ we obtain that for any $\varepsilon>0$,
\begin{eqnarray*}
\!\!\!\!\!\!\!\!&&\tilde{\mathbb{P}}\Big(|\Psi(t,\tilde{X}^{(n)})-\Psi_R(t,\tilde{X}^{(n)})|>\varepsilon\Big)
\nonumber \\
=\!\!\!\!\!\!\!\!&&\tilde{\mathbb{P}}\Big(|\Psi(t,\tilde{X}^{(n)})-\Psi_R(t,\tilde{X}^{(n)})|>\varepsilon,\sup_{t\in[0,T]}\|\tilde{X}^{(n)}_t\|_{\mathbb{H}}\leq R\Big)
\nonumber \\
\!\!\!\!\!\!\!\!&&+ \tilde{\mathbb{P}}\Big(|\Psi(t,\tilde{X}^{(n)})-\Psi_R(t,\tilde{X}^{(n)})|>\varepsilon,\sup_{t\in[0,T]}\|\tilde{X}^{(n)}_t\|_{\mathbb{H}}> R\Big)
\nonumber \\
\leq\!\!\!\!\!\!\!\!&& \sup_{n\in\mathbb{N}}\tilde{\mathbb{P}}\Big(\sup_{t\in[0,T]}\|\tilde{X}^{(n)}_t\|_{\mathbb{H}}> R\Big).
\end{eqnarray*}
In view of (\ref{apri1}), letting $n\to\infty$ and $R\to \infty$  we derive
\begin{equation}\label{e29}
|\Psi(t,\tilde{X}^{(n)})-\Psi_R(t,\tilde{X}^{(n)})|\to 0~~\text{in probability}.
\end{equation}
Applying similar argument, we also obtain
\begin{equation}\label{e31}
|\Psi(t,\tilde{X})-\Psi_R(t,\tilde{X})|\to 0~~\text{in probability}.
\end{equation}
Collecting (\ref{e28})-(\ref{e31}), we conclude that (\ref{e26}) follows.

\vspace{1mm}

We complete the proof.
\end{proof}

From Lemma \ref{lem4}, along a subsequence still denoted by $\{n\}$ we have that as $n\to\infty$,
\begin{equation}\label{con6}
\sup_{t\in[0,T]}\Big\|\int_0^{t} \mathcal{P}_n\mathcal{B}(s,\tilde{X}^{(n)}_s)\tilde{\mathcal{P}}_nd\tilde{W}^{(n)}_s-\int_0^{t} \mathcal{B}(s,\tilde{X}_s)d\tilde{W}_s\Big\|_{\mathbb{H}}\to0,~~\tilde{\mathbb{P}}\text{-a.s.}.
\end{equation}
Thus applying the convergence (\ref{es80})-(\ref{con1}) and (\ref{con6}), for any $v\in\cup_{n\geq 1}\mathbb{H}_n(\subset \mathbb{V})$, $\varphi\in L^{\infty}([0,T]\times \Omega;\mathbb{R})$ we obtain
\begin{eqnarray*}
\!\!\!\!\!\!\!\!&&\int_0^T{}_{\mathbb{V}^*}\langle \tilde{X}_t,\varphi_tv\rangle_{\mathbb{V}}dt
\nonumber \\
=\!\!\!\!\!\!\!\!&&\lim_{n\to\infty}\int_0^T{}_{\mathbb{V}^*}\langle \tilde{X}^{(n)}_t,\varphi_tv\rangle_{\mathbb{V}}dt
\nonumber \\
=\!\!\!\!\!\!\!\!&&\lim_{n\to\infty}\Bigg({}_{\mathbb{V}^*}\langle x^{(n)},v\rangle_{\mathbb{V}}\int_0^T\varphi_tdt+
\int_0^T\int_0^t{}_{\mathbb{V}^*}\langle\mathcal{P}_n\mathcal{A}(s,\tilde{X}^{(n)}_s),\varphi_tv\rangle_{\mathbb{V}}dsdt
\nonumber \\
\!\!\!\!\!\!\!\!&&+\int_0^T\langle\int_0^t\mathcal{P}_n\mathcal{B}(s,\tilde{X}^{(n)}_s)\tilde{\mathcal{P}}_nd\tilde{W}^{(n)}_s,\varphi_tv\rangle_{\mathbb{H}}dt\Bigg)
\nonumber \\
=\!\!\!\!\!\!\!\!&&\lim_{n\to\infty}\Bigg({}_{\mathbb{V}^*}\langle x^{(n)},v\rangle_{\mathbb{V}}\int_0^T\varphi_tdt+
\int_0^T{}_{\mathbb{V}^*}\langle\mathcal{P}_n\mathcal{A}(s,\tilde{X}^{(n)}_s),\int_s^T\varphi_tdt\cdot v \rangle_{\mathbb{V}}ds
\nonumber \\
\!\!\!\!\!\!\!\!&&+\int_0^T\langle\int_0^t\mathcal{P}_n\mathcal{B}(s,\tilde{X}^{(n)}_s)\tilde{\mathcal{P}}_nd\tilde{W}^{(n)}_s,\varphi_tv\rangle_{\mathbb{H}}dt\Bigg)
\nonumber \\
=\!\!\!\!\!\!\!\!&&\int_0^T{}_{\mathbb{V}^*}\langle x+
\int_0^t\tilde{\mathcal{A}}(s)ds
+\int_0^t\mathcal{B}(s,\tilde{X}_s)d\tilde{W}_s,\varphi_tv\rangle_{\mathbb{V}}dt,~~\tilde{\mathbb{P}}\text{-a.s.}.
\end{eqnarray*}
Thus we define
\begin{equation}\label{es1}
\bar{X}_t:=x+\int_0^t\tilde{\mathcal{A}}(s)ds+\int_0^t\mathcal{B}(s,\tilde{X}_s)d\tilde{W}_s,~t\in[0,T].
\end{equation}
It is clear that
\begin{equation}\label{es2}
\tilde{X}=\bar{X},~~\tilde{\mathbb{P}}\otimes dt\text{-a.e.}.
\end{equation}


In what follows, without loss of generality, we replace $(\tilde{\Omega},\tilde{\mathscr{F}},\tilde{\mathbb{P}})$, $(\tilde{X}^{(n)},\tilde{W}^{(n)})$, and $(\tilde{X},\tilde{W})$ by $(\Omega,\mathscr{F},\mathbb{P})$, $(X^{(n)},W^{(n)})$, and $(X,W),$ respectively.

\vspace{2mm}
In order to characterize the limit of $\mathcal{A}^{(n)}(\cdot)$, we recall the following lemma concerning the pseudo-monotonicity (cf. \cite{LR13}).
\begin{lemma}\label{lemps}
Assume that the embedding $\mathbb{V}\subset\mathbb{H}$ is compact, $(\mathbf{A_1})$ and $(\mathbf{A_2})$ hold. Then $\mathcal{A}(t,\cdot)$ is pseudo-monotone from $\mathbb{V}$ to $\mathbb{V}^*$ for a.e.~$t\in[0,T]$.
\end{lemma}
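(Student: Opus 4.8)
The plan is to run the classical Minty--Browder monotonicity trick, adapted to the local monotonicity $(\mathbf{A_2})$, with the compact embedding supplying the strong $\mathbb{H}$-convergence of the approximating sequence that the argument needs. Throughout I fix a $t\in[0,T]$ for which $(\mathbf{A_1})$ and $(\mathbf{A_2})$ hold and drop $t$ from the notation. Given $u_n\rightharpoonup u$ in $\mathbb{V}$ with $\liminf_{n\to\infty}{}_{\mathbb{V}^*}\langle\mathcal{A}(u_n),u_n-u\rangle_{\mathbb{V}}\ge 0$, the first observation is that $\{u_n\}$ is bounded in $\mathbb{V}$, hence $u_n\to u$ strongly in $\mathbb{H}$ by compactness of $\mathbb{V}\subset\mathbb{H}$, and in particular $\sup_n\rho(u_n)<\infty$ by \eqref{esq22}. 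Discarding the nonnegative term $\|\mathcal{B}(u_n)-\mathcal{B}(w)\|_{L_2(U,\mathbb{H})}^2$ in $(\mathbf{A_2})$ leaves the one-sided bound
\[
{}_{\mathbb{V}^*}\langle\mathcal{A}(u_n)-\mathcal{A}(w),u_n-w\rangle_{\mathbb{V}}\le\frac{1}{2}\big(C+\rho(u_n)+\eta(w)\big)\|u_n-w\|_{\mathbb{H}}^2,\qquad w\in\mathbb{V},
\]
which is the only structural input I will use.

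The first step is to upgrade the $\liminf$ hypothesis to $\lim_{n\to\infty}{}_{\mathbb{V}^*}\langle\mathcal{A}(u_n),u_n-u\rangle_{\mathbb{V}}=0$. Putting $w=u$ in the displayed inequality, its right-hand side tends to $0$ because $\|u_n-u\|_{\mathbb{H}}\to 0$ and $\rho(u_n)$ is bounded, while ${}_{\mathbb{V}^*}\langle\mathcal{A}(u),u_n-u\rangle_{\mathbb{V}}\to0$ since $\mathcal{A}(u)\in\mathbb{V}^*$ is fixed and $u_n-u\rightharpoonup0$ in $\mathbb{V}$; hence $\limsup_{n}{}_{\mathbb{V}^*}\langle\mathcal{A}(u_n),u_n-u\rangle_{\mathbb{V}}\le0$, and combined with the hypothesis the limit equals $0$.

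The second, and main, step is the hemicontinuity/Minty argument yielding $\limsup_{n}{}_{\mathbb{V}^*}\langle\mathcal{A}(u_n),u-w\rangle_{\mathbb{V}}\le{}_{\mathbb{V}^*}\langle\mathcal{A}(u),u-w\rangle_{\mathbb{V}}$ for every $w\in\mathbb{V}$. Fix $w$ and, for $\lambda\in(0,1]$, set $z_\lambda:=u+\lambda(w-u)$. Applying the one-sided bound to the pair $(u_n,z_\lambda)$ and expanding $u_n-z_\lambda=(u_n-u)+\lambda(u-w)$ isolates $\lambda\,{}_{\mathbb{V}^*}\langle\mathcal{A}(u_n),u-w\rangle_{\mathbb{V}}$ on one side, with ${}_{\mathbb{V}^*}\langle\mathcal{A}(u_n),u_n-u\rangle_{\mathbb{V}}$, ${}_{\mathbb{V}^*}\langle\mathcal{A}(z_\lambda),u_n-z_\lambda\rangle_{\mathbb{V}}$ and an error $\frac{1}{2}(C+\rho(u_n)+\eta(z_\lambda))\|u_n-z_\lambda\|_{\mathbb{H}}^2$ on the other. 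Taking $\limsup_n$: the first term vanishes by Step~1, the second converges to $\lambda\,{}_{\mathbb{V}^*}\langle\mathcal{A}(z_\lambda),u-w\rangle_{\mathbb{V}}$ by weak convergence in $\mathbb{V}$, and the error is $O(\lambda^2)$ because $\|u_n-z_\lambda\|_{\mathbb{H}}^2\to\lambda^2\|u-w\|_{\mathbb{H}}^2$ while $\rho(u_n)$ and $\eta(z_\lambda)$ stay bounded (the latter uniformly in $\lambda\in(0,1]$ via \eqref{esq22}, since $z_\lambda$ remains in a bounded subset of $\mathbb{V}$). Dividing by $\lambda$ and letting $\lambda\to0^+$, hemicontinuity $(\mathbf{A_1})$ gives ${}_{\mathbb{V}^*}\langle\mathcal{A}(z_\lambda),u-w\rangle_{\mathbb{V}}\to{}_{\mathbb{V}^*}\langle\mathcal{A}(u),u-w\rangle_{\mathbb{V}}$ and the $O(\lambda)$ remainder drops out, proving the claim.

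Finally I would conclude by decomposing $u_n-v=(u_n-u)+(u-v)$ for arbitrary $v\in\mathbb{V}$: Step~1 kills $\limsup_{n}{}_{\mathbb{V}^*}\langle\mathcal{A}(u_n),u_n-u\rangle_{\mathbb{V}}$ and Step~2 with $w=v$ bounds $\limsup_{n}{}_{\mathbb{V}^*}\langle\mathcal{A}(u_n),u-v\rangle_{\mathbb{V}}$ by ${}_{\mathbb{V}^*}\langle\mathcal{A}(u),u-v\rangle_{\mathbb{V}}$, which is exactly the pseudo-monotonicity inequality in Definition~\ref{de2}. The only delicate point is the bookkeeping in Step~2 --- interchanging the $n\to\infty$ and $\lambda\to0^+$ limits and checking that all error terms are controlled uniformly --- which is handled precisely by the a priori boundedness of $\rho(u_n)$ and of $\eta(z_\lambda)$ from \eqref{esq22} together with the strong convergence $u_n\to u$ in $\mathbb{H}$; this is where compactness of $\mathbb{V}\subset\mathbb{H}$ is essential. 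The result is the variational analogue of the classical statement for local monotone operators; cf.~\cite{LR13}.
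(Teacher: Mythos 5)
Your proof is correct and takes the standard approach that the paper implicitly relies on (the paper does not reprove this lemma but defers to \cite{LR13}, where the argument is the same Minty--Browder-type trick). You correctly extract the one-sided estimate from $(\mathbf{A_2})$ by discarding the nonnegative $\|\mathcal{B}(\cdot)-\mathcal{B}(\cdot)\|_{L_2}^2$ term, use the compact embedding to obtain strong $\mathbb{H}$-convergence of $u_n$ (and hence boundedness of $\rho(u_n)$ via \eqref{esq22}), upgrade the $\liminf\ge 0$ hypothesis to an actual vanishing limit, and then run the $z_\lambda=u+\lambda(w-u)$ interpolation, dividing by $\lambda$ and letting $\lambda\to 0^+$ via $(\mathbf{A_1})$, with the $\rho$/$\eta$-weighted error controlled uniformly because $z_\lambda$ stays in a bounded subset of $\mathbb{V}\cap\mathbb{H}$. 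The only point worth flagging explicitly is that the conclusion tacitly uses $(\mathbf{A_1})$ with continuity at (or right-continuity up to) $\lambda=0$, which is the intended reading of $\mathbb{R}_+\ni\lambda$ in the paper's formulation; with that convention understood, the argument is complete.
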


The next important step is to show $\tilde{\mathcal{A}}(\cdot)= \mathcal{A}(\cdot,X_{\cdot})$. To this end, we introduce the following lemma.

\begin{lemma}\label{lem2}
Assume that the embedding $\mathbb{V}\subset\mathbb{H}$ is compact, $(\mathbf{A_1})$-$(\mathbf{A_4})$ hold. If
\begin{eqnarray*}
\!\!\!\!\!\!\!\!&& (\text{i})~~ X^{(n)}\rightharpoonup X~~~\text{in}~L^{\alpha}([0,T];\mathbb{V}),~~~\mathbb{P}\text{-a.s.},
\nonumber \\
\!\!\!\!\!\!\!\!&&(\text{ii})~~ \mathcal{A}(\cdot,X^{(n)}_{\cdot})\rightharpoonup \tilde{\mathcal{A}}~~~\text{in}~L^{\frac{\alpha}{\alpha-1}}([0,T];\mathbb{V}^*),~~~\mathbb{P}\text{-a.s.},
\nonumber \\
\!\!\!\!\!\!\!\!&&(\text{iii})~~\liminf_{n\to\infty}\int_0^T{}_{\mathbb{V}^*}\langle \mathcal{A}(t,X^{(n)}_{t}),X^{(n)}_{t}\rangle_{\mathbb{V}}dt\geq \int_0^T{}_{\mathbb{V}^*}\langle \tilde{\mathcal{A}}(t),X_t\rangle_{\mathbb{V}}dt,
~~~\mathbb{P}\text{-a.s.},
\end{eqnarray*}
then $\tilde{\mathcal{A}}(\cdot)= \mathcal{A}(\cdot,X_{\cdot})$, $\mathbb{P}\otimes dt$\text{-a.e.}.
\end{lemma}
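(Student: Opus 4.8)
The plan is to invoke the pseudo-monotonicity of $\mathcal{A}(t,\cdot)$ (Lemma \ref{lemps}) at almost every time $t$, after first upgrading the integral hypothesis (iii) to a pointwise-in-time statement along a suitable subsequence. First I would observe that by the compact embedding $\mathbb{V}\subset\mathbb{H}$ and assumption (i), together with the Aubin–Lions-type reasoning already used in the tightness arguments (Lemmas \ref{lem6}–\ref{lem7}), one has $X^{(n)}\to X$ strongly in $L^{\alpha}([0,T];\mathbb{H})$; indeed this is exactly the convergence (\ref{es80}) guaranteed on the new probability space. Hence, passing to a further subsequence, $X^{(n)}_t\to X_t$ in $\mathbb{H}$ for a.e.\ $t\in[0,T]$, $\mathbb{P}$-a.s.\ (and also $X^{(n)}_t\rightharpoonup X_t$ in $\mathbb{V}$ for a.e.\ $t$, using the $L^\alpha([0,T];\mathbb{V})$-bound and a diagonal argument on countably many test functionals). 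The growth condition $(\mathbf{A_4})$ gives that $\{\mathcal{A}(\cdot,X^{(n)}_\cdot)\}$ is bounded in $L^{\frac{\alpha}{\alpha-1}}([0,T];\mathbb{V}^*)$, $\mathbb{P}$-a.s., which is consistent with (ii).

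Next I would set $\phi_n(t):={}_{\mathbb{V}^*}\langle \mathcal{A}(t,X^{(n)}_t),X^{(n)}_t-X_t\rangle_{\mathbb{V}}$ and note that, by (ii) and the strong convergence $X^{(n)}\to X$ in $L^\alpha([0,T];\mathbb{H})\supset$ (the duality pairing restricted to $\mathbb{H}$), one has
\[
\liminf_{n\to\infty}\int_0^T\phi_n(t)\,dt
=\liminf_{n\to\infty}\int_0^T{}_{\mathbb{V}^*}\langle \mathcal{A}(t,X^{(n)}_t),X^{(n)}_t\rangle_{\mathbb{V}}\,dt
-\int_0^T{}_{\mathbb{V}^*}\langle \tilde{\mathcal{A}}(t),X_t\rangle_{\mathbb{V}}\,dt\ge 0,
\]
by hypothesis (iii). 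The key point is that the monotonicity-type structure in $(\mathbf{A_2})$ forces a lower bound on $\phi_n(t)$ of the form $\phi_n(t)\ge -(C+\rho(X^{(n)}_t)+\eta(X_t))\|X^{(n)}_t-X_t\|_{\mathbb{H}}^2 + {}_{\mathbb{V}^*}\langle \mathcal{A}(t,X_t),X^{(n)}_t-X_t\rangle_{\mathbb{V}}$; since $\rho(X^{(n)}_\cdot)$ is integrable in time (by $(\mathbf{A_2})$ and the a priori bounds) and $\|X^{(n)}_t-X_t\|_{\mathbb{H}}\to0$ pointwise, the negative part of $\phi_n$ is uniformly integrable, so Fatou's lemma applies to $\phi_n+(\text{this nonnegative majorant of the negative part})$. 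Combining this with $\liminf_n\int_0^T\phi_n\ge0$, I would conclude that $\liminf_{n\to\infty}\phi_n(t)\ge0$ for a.e.\ $t$, $\mathbb{P}$-a.s. (This is the standard trick: a sequence of functions whose integrals have nonnegative liminf and whose negative parts are uniformly integrable must have nonnegative liminf pointwise, up to a further subsequence depending measurably on $t$ — handled via Fatou applied to truncations.)

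With the pointwise inequality $\liminf_n {}_{\mathbb{V}^*}\langle \mathcal{A}(t,X^{(n)}_t),X^{(n)}_t-X_t\rangle_{\mathbb{V}}\ge0$ in hand for a.e.\ $t$, together with $X^{(n)}_t\rightharpoonup X_t$ in $\mathbb{V}$, Lemma \ref{lemps} (pseudo-monotonicity of $\mathcal{A}(t,\cdot)$) yields, for every $w\in\mathbb{V}$,
\[
\limsup_{n\to\infty}{}_{\mathbb{V}^*}\langle \mathcal{A}(t,X^{(n)}_t),X^{(n)}_t-w\rangle_{\mathbb{V}}\le {}_{\mathbb{V}^*}\langle \mathcal{A}(t,X_t),X_t-w\rangle_{\mathbb{V}}.
\]
Choosing $w=X_t$ gives $\limsup_n {}_{\mathbb{V}^*}\langle \mathcal{A}(t,X^{(n)}_t),X^{(n)}_t-X_t\rangle_{\mathbb{V}}\le0$, hence the limit is $0$; feeding this back into pseudo-monotonicity with $w=X_t\pm s v$ for arbitrary $v\in\mathbb{V}$ and $s>0$, then letting $n\to\infty$ and using (ii), identifies $\tilde{\mathcal{A}}(t)=\mathcal{A}(t,X_t)$ in $\mathbb{V}^*$ for a.e.\ $t$, $\mathbb{P}$-a.s. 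Finally I would integrate/test against arbitrary $\varphi\in L^\infty([0,T])$ and $v\in\mathbb{V}$ to upgrade this to the asserted $\mathbb{P}\otimes dt$-a.e.\ identity.

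The main obstacle I anticipate is the passage from the integral inequality (iii) to the pointwise-in-$t$ inequality: one must carefully control the negative part of $\phi_n(t)$ using $(\mathbf{A_2})$ and the a priori estimates (which here only hold \emph{in probability} rather than in $L^2$, because of the generalized coercivity), and extract a subsequence — possibly $\omega$-dependent — along which the pointwise liminf is nonnegative for a.e.\ $t$. Making this rigorous without finite moments requires working on the events $\{\tau^{(n)}_M\ge T\}$ from the energy estimates and a localization in $M$, exactly as in Lemmas \ref{lem6}–\ref{lem7}.
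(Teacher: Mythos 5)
Your overall template (strong $\mathbb{H}$-convergence at a.e.\ time, a pointwise-in-$t$ sign condition on $\phi_n(t):={}_{\mathbb{V}^*}\langle \mathcal{A}(t,X^{(n)}_t),X^{(n)}_t-X_t\rangle_{\mathbb{V}}$, then an application of pseudo-monotonicity at almost every fixed $t$) is indeed the shape of the paper's argument. However, the mechanism you propose for obtaining the pointwise sign condition does not work, and the paper uses a fundamentally different device.

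There are three concrete problems. First, the lower bound you claim from $(\mathbf{A_2})$ has the wrong direction. Condition $(\mathbf{A_2})$ is a one-sided estimate of the form $\langle \mathcal{A}(t,u)-\mathcal{A}(t,v),u-v\rangle \le \tfrac12(C+\rho(u)+\eta(v))\|u-v\|_{\mathbb{H}}^2$; taking $u=X^{(n)}_t$, $v=X_t$ and rearranging yields an \emph{upper} bound $\phi_n(t)\le \langle \mathcal{A}(t,X_t),X^{(n)}_t-X_t\rangle + \tfrac12(C+\rho(X^{(n)}_t)+\eta(X_t))\|X^{(n)}_t-X_t\|_{\mathbb{H}}^2$, and no choice of the roles of $u,v$ produces the lower bound you wrote. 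Second, the ``standard trick'' you invoke (``integrals with nonnegative $\liminf$ plus uniformly integrable negative parts imply pointwise nonnegative $\liminf$'') is false. For example, on $[0,1]$ take a typewriter sequence of intervals $A_n$ with $|A_n|\to 0$ and $\sum_n |A_n|=\infty$, and set $\phi_n=1-2\mathbf{1}_{A_n}$; then $\int_0^1\phi_n\to 1>0$, the negative parts $\phi_n^-=\mathbf{1}_{A_n}$ are trivially uniformly integrable, yet $\liminf_n\phi_n(t)=-1$ for a.e.\ $t$. Third, the claim that $X^{(n)}_t\rightharpoonup X_t$ in $\mathbb{V}$ for a.e.\ fixed $t$ cannot be deduced from the weak convergence in $L^\alpha([0,T];\mathbb{V})$: that convergence, even combined with strong $\mathbb{H}$-convergence, provides no $n$-uniform bound on $\|X^{(n)}_t\|_{\mathbb{V}}$ at a fixed time, and without such a bound one cannot extract a pointwise weak limit.

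The paper instead establishes the pointwise statement via a contradiction argument driven by $(\mathbf{A_3})$ and $(\mathbf{A_4})$, not by $(\mathbf{A_2})$: these two give, for a.e.\ $(t,\omega)$, an upper bound of the form $\phi^{(n)}(t,\omega)\le -\delta_0\|X^{(n)}_t(\omega)\|_{\mathbb{V}}^\alpha + F^{(n)}(t,\omega)$ where $F^{(n)}(t,\omega)\to F(t,\omega)$ once $X^{(n)}_t(\omega)\to X_t(\omega)$ in $\mathbb{H}$. If $\limsup_n\phi^{(n)}(t,\omega)>0$ along some subsequence, the $-\delta_0\|\cdot\|_{\mathbb{V}}^\alpha$ term forces $\{\|X^{(n_k)}_t(\omega)\|_{\mathbb{V}}\}$ to be bounded, hence $X^{(n_k)}_t(\omega)\rightharpoonup X_t(\omega)$ in $\mathbb{V}$, and pseudo-monotonicity gives $\limsup_k\phi^{(n_k)}(t,\omega)\le 0$, a contradiction; this yields $\limsup_n\phi^{(n)}(t,\omega)\le 0$ a.e.\ (\textbf{Step~1}). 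The paper then combines this with (iii), a (reverse) Fatou argument and dominated convergence on the positive parts $\phi^{(n)}_+$ to show $\phi^{(n_k)}\to 0$ a.e.\ (\textbf{Step~2}), and only then does the $\mathbb{V}$-boundedness and weak $\mathbb{V}$-convergence of $X^{(n_k)}_t(\omega)$ become available, allowing Browder's form of pseudo-monotonicity to give $\mathcal{A}(t,X^{(n_k)}_t(\omega))\rightharpoonup\mathcal{A}(t,X_t(\omega))$ in $\mathbb{V}^*$ and identify $\tilde{\mathcal{A}}$ by uniqueness of weak limits (\textbf{Step~3}). Your proposal misses the role of the coercivity term $-\delta_0\|X^{(n)}_t\|_{\mathbb{V}}^\alpha$, which is what both enables the pointwise contradiction and supplies the pointwise $\mathbb{V}$-compactness you tried to assert at the outset.
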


\begin{proof}
First, due to $(\mathbf{A_3})$ and $(\mathbf{A_4})$, we can obtain that there exists $\delta_0>0$ such that
\begin{eqnarray}\label{e15}
\!\!\!\!\!\!\!\!&& {}_{\mathbb{V}^*}\langle \mathcal{A}(t,X^{(n)}_t),X^{(n)}_t-X_t\rangle_{\mathbb{V}}
\nonumber \\
\leq\!\!\!\!\!\!\!\!&&-\delta\|X^{(n)}_t\|_{\mathbb{V}}^{\alpha}+C+g(\|X^{(n)}_t\|_{\mathbb{H}}^2)+\|\mathcal{A}(t,X^{(n)}_t)\|_{\mathbb{V}^*}\|X_t\|_{\mathbb{V}}
\nonumber \\
\leq\!\!\!\!\!\!\!\!&&-\delta\|X^{(n)}_t\|_{\mathbb{V}}^{\alpha}+C+g(\|X^{(n)}_t\|_{\mathbb{H}}^2)
\nonumber \\
\!\!\!\!\!\!\!\!&&+C\big(1+\|X^{(n)}_t\|_{\mathbb{V}}^{\alpha}\big)^{\frac{\alpha-1}{\alpha}}\big(1+\|X^{(n)}_t\|_{\mathbb{H}}^{\beta}\big)^{\frac{\alpha-1}{\alpha}}\|X_t\|_{\mathbb{V}}
\nonumber \\
\leq\!\!\!\!\!\!\!\!&&-\delta_0\|X^{(n)}_t\|_{\mathbb{V}}^{\alpha}+C+g(\|X^{(n)}_t\|_{\mathbb{H}}^2)
+C\big(1+\|X^{(n)}_t\|_{\mathbb{H}}^{\beta}\big)^{\alpha-1}\|X_t\|_{\mathbb{V}}^{\alpha}.
\end{eqnarray}
For convenience, we denote
\begin{eqnarray*}
\phi^{(n)}(t,\omega):=\!\!\!\!\!\!\!\!&& {}_{\mathbb{V}^*}\langle \mathcal{A}(t,X^{(n)}_t(\omega)),X^{(n)}_t(\omega)-X_t(\omega)\rangle_{\mathbb{V}},
\nonumber \\
F^{(n)}(t,\omega):=\!\!\!\!\!\!\!\!&& C+g(\|X^{(n)}_t(\omega)\|_{\mathbb{H}}^2)
+C\big(1+\|X^{(n)}_t(\omega)\|_{\mathbb{H}}^{\beta}\big)^{\alpha-1}\|X_t(\omega)\|_{\mathbb{V}}^{\alpha}.
\end{eqnarray*}
Then (\ref{e15}) reads as
\begin{equation}\label{e16}
\phi^{(n)}(t,\omega)\leq-\delta_0\|X^{(n)}_t(\omega)\|_{\mathbb{V}}^{\alpha}+F^{(n)}(t,\omega).
\end{equation}
The proof of this lemma is divided into the following four steps.

\vspace{1mm}
\noindent\textbf{Step 1.}  In this step, we prove that for a.e.~$(t,\omega)$,
\begin{equation}\label{e17}
\limsup_{n\to \infty}\phi^{(n)}(t,\omega)\leq 0.
\end{equation}

According to  (\ref{e21}),
 there exists a $\mathbb{P}\otimes dt$-null set $\mathcal{N}$ such that for any $(t,\omega)\in (\Omega\times[0,T])\backslash \mathcal{N}$, along a subsequence still denoted by $\{n\}$ we have
\begin{equation}\label{e18}
\|X^{(n)}_t(\omega)-X_t(\omega)\|_{\mathbb{H}}\to 0~\text{as}~n\to\infty.
\end{equation}
 From now on, we fix $(t,\omega)\in (\Omega\times[0,T])\backslash \mathcal{N}$ and suppose that
\begin{equation*}
\limsup_{n\to \infty}\phi^{(n)}(t,\omega)>0.
\end{equation*}
Thus we can take a subsequence $\{n_k\}_{k\in\mathbb{N}}$ such that
\begin{equation}\label{e25}
\lim_{k\to \infty}\phi^{(n_k)}(t,\omega)>0.
\end{equation}

It follows from  (\ref{e16}) and (\ref{e18}) that
\begin{equation}\label{e19}
\Big\{\|X^{(n_k)}_t(\omega)\|_{\mathbb{V}}^{\alpha}\Big\}_{k\in\mathbb{N}}~~\text{is bounded}.
\end{equation}
Therefore, there exists an element $z\in\mathbb{V}$ such that
$$X^{(n_k)}_t(\omega)\rightharpoonup z~~\text{in}~\mathbb{V}~\text{as}~k\to\infty.$$
From the convergence (\ref{e18}), it is clear that $z=X_t(\omega)$ and
$$X^{(n_k)}_t(\omega)\rightharpoonup X_t(\omega)~~\text{in}~\mathbb{V}~\text{as}~k\to\infty.$$
Using the fact that $\mathcal{A}(t,\cdot)$ is pseudo-monotone (cf.~Lemma \ref{lemps}), we deduce that
\begin{equation}\label{e20}
\limsup_{k\to\infty}\phi^{(n_k)}(t,\omega)\leq 0,
  \end{equation}
which contradicts to  (\ref{e25}).
  Hence,  (\ref{e17}) holds.

\vspace{1mm}
\noindent\textbf{Step 2.} In this step, we prove that  along a subsequence $\{n_k\}_{k\in\mathbb{N}}$ for a.e.~$(t,\omega)$,
$$\lim_{k\to \infty} \phi^{(n_k)}(t,\omega)=0.$$

First, by the conditions (ii)-(iii) in this lemma, the control (\ref{apri2}), (\ref{e18}),  and Fatou's lemma, we have for a.s.~$\omega$,
\begin{eqnarray*}
0 \leq\!\!\!\!\!\!\!\!&& \liminf_{n\to\infty}\int_0^T\phi^{(n)}(t,\omega)dt
\leq \limsup_{n\to\infty}\int_0^T\phi^{(n)}(t,\omega)dt
\nonumber \\
\leq\!\!\!\!\!\!\!\!&& \int_0^T\limsup_{n\to\infty}\phi^{(n)}(t,\omega)dt\leq 0.
\end{eqnarray*}
Hence
\begin{equation}\label{e23}
\lim_{n\to\infty}\int_0^T\phi^{(n)}(t,\omega)dt=0.
\end{equation}
Then combining (\ref{e17}) and (\ref{e23}) and applying the dominated convergence theorem, it follows that
\begin{equation}\label{e24}
\lim_{n\to\infty}\int_0^T\phi^{(n)}_+(t,\omega)dt=0,
\end{equation}
where $\phi^{(n)}_+(t,\omega):=\text{max}\{\phi^{(n)}(t,\omega),0\}$.

By (\ref{e23})-(\ref{e24}) and the fact that $|\phi^{(n)}|=2\phi^{(n)}_{+}-\phi^{(n)}$, we have
\begin{equation*}
\lim_{n\to\infty}\int_0^T|\phi^{(n)}(t,\omega)|dt=0,
\end{equation*}
which implies that the claim follows.

\vspace{1mm}
\noindent\textbf{Step 3.} In this step, we prove that $\tilde{\mathcal{A}}(\cdot)= \mathcal{A}(\cdot,X_{\cdot})$, $\mathbb{P}\otimes dt$\text{-a.e.}.

\vspace{1mm}
Combining the claim of \textbf{Step 2}, (\ref{e16}) and (\ref{e18}),  we can get  (\ref{e19}) holds. Thus,
$$X^{(n_k)}_t(\omega)\rightharpoonup X_t(\omega)~~\text{in}~\mathbb{V}~\text{as}~k\to\infty,$$
which combining with  the pseudo-monotonicity of  $\mathcal{A}(t,\cdot)$ implies that
$$\mathcal{A}(t,X^{(n_k)}_{t}(\omega))\rightharpoonup \mathcal{A}(t,X_{t}(\omega))~~\text{in}~\mathbb{V}^*~\text{as}~k\to\infty,$$
Consequently, by the condition (ii) in the lemma and the uniqueness of the limit, it is clear that $\tilde{\mathcal{A}}(\cdot)= \mathcal{A}(\cdot,X_{\cdot})$, $\mathbb{P}\otimes dt$\text{-a.e.}.
\end{proof}

Now we have all the ingredients to prove the existence of weak solutions to (\ref{eqSPDE}).\vspace{2mm}\\
\textbf{Proof of  existence of weak solutions.} We aim to show that $(X,W)$ obtained above is a weak solution to Eq.~(\ref{eqSPDE}) in the sense of Definition \ref{dew}. Combining (\ref{es80}), (\ref{con1}), (\ref{es1}), (\ref{es2}) and Lemma \ref{lem2}, we need to show that the condition (iii) in Lemma \ref{lem2} holds and $X\in\mathbb{C}_T(\mathbb{H}),~\mathbb{P}\text{-a.s.}$.  The proof is separated into the following two steps.

%
\noindent\textbf{Step 1.} In this step, we prove that the condition (iii) in Lemma \ref{lem2} holds. First, in view of (\ref{con1}) it implies that
\begin{equation*}
\|\tilde{\mathcal{A}}(\cdot)\|_{L^{\frac{\alpha}{\alpha-1}}([0,T];\mathbb{V}^*)}<\infty,~\mathbb{P}\text{-a.s.}.
\end{equation*}
Then we denote a stopping time
\begin{equation*}
\tau^{\mathcal{A}}_M:=\inf\Bigg\{t\in[0,T]:\int_0^t\|\tilde{\mathcal{A}}(s)\|_{\mathbb{V}^*}^{\frac{\alpha}{\alpha-1}}ds\geq M\Bigg\}\wedge T,~~M>0.
\end{equation*}
It is easy to see that
$$\lim_{M\to\infty}\tau^{\mathcal{A}}_M=T,~\mathbb{P}\text{-a.s.}.$$

On the other hand, the estimate (\ref{apri2}) yields
\begin{equation}\label{con11}
\sup_{t\in[0,T]}\|X_t\|_{\mathbb{H}}+\int_0^T\|X_t\|_{\mathbb{V}}^{\alpha}dt<\infty,~\mathbb{P}\text{-a.s.}.
\end{equation}
By (\ref{es80}), we know that
\begin{equation}\label{con12}
X\in\mathbb{C}_T(\mathbb{V}^*),~\mathbb{P}\text{-a.s.}.
\end{equation}
Since $\mathbb{H}\subset\mathbb{V}^*$ is dense, in view of (\ref{con11}) and (\ref{con12}) we deduce that $X_{\cdot}$ is weakly continuous in $\mathbb{H}$, so that $\|X_{\cdot}\|_{\mathbb{H}}$ is lower semicontinuous.
Therefore, we set
\begin{equation*}
\tau_M^X:=\inf\Bigg\{t\in[0,T]:\|X_t\|_{\mathbb{H}}+\int_0^t \|X_s\|_{\mathbb{V}}^{\alpha}ds\geq M\Bigg\}\wedge T,~~M>0,
\end{equation*}
 which is a stopping time and
$$\lim_{M\to\infty}\tau_M^X=T,~\mathbb{P}\text{-a.s.}.$$
Let us denote
$$\hat{\tau}_M:=\tau^{\mathcal{A}}_M\wedge\tau_M^X.$$
Recall Eq.~(\ref{es1}), it is easy to see that
\begin{equation*}
X_{t\wedge\hat{\tau}_M}=x+\int_0^t\mathbf{1}_{\{s\leq\hat{\tau}_M \}}\tilde{\mathcal{A}}(s)ds+\int_0^t\mathbf{1}_{\{s\leq\hat{\tau}_M \}}\mathcal{B}(s,X_s)dW_s,~t\in[0,T].
\end{equation*}
Let us denote
$$ Y(t):=\mathbf{1}_{\{t\leq\hat{\tau}_M \}}\tilde{\mathcal{A}}(t),~Z(t):=\mathbf{1}_{\{t\leq\hat{\tau}_M \}}\mathcal{B}(t,X_t).$$
Then it is clear that
$$X_{\cdot\wedge\hat{\tau}_M}\mathbf{1}_{\{\cdot\leq\hat{\tau}_M \}}\in L^{\alpha}([0,T]\times \Omega;\mathbb{V}),~Y(\cdot)\in L^{\frac{\alpha}{\alpha-1}}([0,T]\times \Omega;\mathbb{V}^*)$$
and
$$Z(\cdot)\in L^{2}([0,T]\times \Omega;\mathbb{H}).$$
Therefore, according to Proposition 4.2 in \cite{GC1}, we can apply It\^{o}'s formula  and deduce that
\begin{eqnarray}\label{ito1}
\|X_t\|_{\mathbb{H}}^2
=\!\!\!\!\!\!\!\!&&\|x\|_{\mathbb{H}}^2+\int_0^t\Big(2{}_{\mathbb{V}^*}\langle \tilde{\mathcal{A}}(s),X_s\rangle_{\mathbb{V}}
+\|\mathcal{B}(s,X_s)\|_{L_2(U,\mathbb{H})}^2\Big)ds
\nonumber \\
\!\!\!\!\!\!\!\!&&+2\int_0^t\langle \mathcal{B}(s,X_s)dW_s,X_s\rangle_{\mathbb{H}}~\text{on}~\{t\leq \hat{\tau}_M\}.
\end{eqnarray}
Note that $\lim_{M\to\infty}\hat{\tau}_M=T$, $\mathbb{P}$-a.s., which implies that the equality (\ref{ito1}) holds for all $t\in[0,T]$.

Next, applying It\^{o}'s formula for $\|X^{(n)}_t\|_{\mathbb{H}}^2$, we have
\begin{eqnarray}\label{ito2}
\|X^{(n)}_t\|_{\mathbb{H}}^2
=\!\!\!\!\!\!\!\!&&\|x^{(n)}\|_{\mathbb{H}}^2+\int_0^t\Big(2{}_{\mathbb{V}^*}\langle \mathcal{A}(s,X^{(n)}_s),X^{(n)}_s\rangle_{\mathbb{V}}
\nonumber \\
\!\!\!\!\!\!\!\!&&
+\|\mathcal{P}_n\mathcal{B}(s,X^{(n)}_s)\tilde{\mathcal{P}}_n\|_{L_2(U,\mathbb{H})}^2\Big)ds
\nonumber \\
\!\!\!\!\!\!\!\!&&+2\int_0^t\langle \mathcal{P}_n\mathcal{B}(s,X^{(n)}_s)\tilde{\mathcal{P}}_ndW^{(n)}_s,X^{(n)}_s\rangle_{\mathbb{H}}.
\end{eqnarray}

The following lemma concerns the convergence of the martingale term in formula (\ref{ito2}).
\begin{lemma}
Along a subsequence still denoted by $\{n\}$, we have
\begin{equation}\label{con8}
\int_0^{\cdot} \langle\mathcal{P}_n\mathcal{B}(s,X^{(n)}_s)\tilde{\mathcal{P}}_ndW^{(n)}_s,X^{(n)}_s\rangle_{\mathbb{H}}\to\int_0^{\cdot} \langle\mathcal{B}(s,X_s)dW_s,X_s\rangle_{\mathbb{H}}
\end{equation}
in $L^{\infty}([0,T];\mathbb{R})$, $\mathbb{P}$-a.s.,  as $n\to\infty$.
\end{lemma}
\begin{proof}
The proof is exactly similar to that of Lemma \ref{lem4}, we omit the details.
\end{proof}

Note that by the lower semicontinuity of norm $\|\cdot\|_{\mathbb{H}}$ in $\mathbb{V}^*$ and the convergence (\ref{es80}), we have
\begin{equation}\label{con9}
\|X_t\|_{\mathbb{H}}^2\leq \liminf_{n\to\infty}\|X^{(n)}_t\|_{\mathbb{H}}^2,~\mathbb{P}\text{-a.s.}.
\end{equation}
Finally, combining (\ref{es80}), (\ref{con3}), (\ref{ito1})-(\ref{con9}), it follows that
$$\liminf_{n\to\infty}\int_0^T{}_{\mathbb{V}^*}\langle \mathcal{A}(t,X^{(n)}_{t}),X^{(n)}_{t}\rangle_{\mathbb{V}}dt\geq \int_0^T{}_{\mathbb{V}^*}\langle \tilde{\mathcal{A}}(t),X_t\rangle_{\mathbb{V}}dt,
~\mathbb{P}\text{-a.s.},$$
namely, the condition (iii) in Lemma \ref{lem2} holds.

\vspace{2mm}
\noindent\textbf{Step 2.} In this step, we prove $X\in\mathbb{C}_T(\mathbb{H})$, $\mathbb{P}$-a.s.. In Step 1, we have shown that  $X_{\cdot}$ is weakly continuous in $\mathbb{H}$.
Thus it suffices to prove
$t\mapsto\|X_{t}\|_{\mathbb{H}}$ is  continuous on $[0,T]$.  Based on Step 1, we deduce that
\begin{eqnarray}\label{ito3}
\|X_t\|_{\mathbb{H}}^2
=\!\!\!\!\!\!\!\!&&\|x\|_{\mathbb{H}}^2+\int_0^t\Big(2{}_{\mathbb{V}^*}\langle \mathcal{A}(s,X_s),X_s\rangle_{\mathbb{V}}
+\|\mathcal{B}(s,X_s)\|_{L_2(U,\mathbb{H})}^2\Big)ds
\nonumber \\
\!\!\!\!\!\!\!\!&&+2\int_0^t\langle \mathcal{B}(s,X_s)dW_s,X_s\rangle_{\mathbb{H}}.
\end{eqnarray}
Since the right-hand side of (\ref{ito3}) is continuous on $[0,T]$, so must be its left-hand side.

We complete the proof of the  existence of weak solutions.  \hspace{\fill}$\Box$

\subsection{Proof of existence and uniqueness}\label{sec2.4}

We first prove the  pathwise uniqueness of solutions to (\ref{eqSPDE}). \vspace{2mm}\\
\textbf{Proof of pathwise uniqueness.} Let $X,Y$ be two solutions of (\ref{eqSPDE}) with same initial value $x\in \mathbb{H}$. Then, the difference process $Z:=X-Y$ solves the following equation
 $$Z_t=\int_0^t\big(\mathcal{A}(s,X_s)-\mathcal{A}(s,Y_s)\big)ds+\int_0^t\big(\mathcal{B}(s,X_s)-\mathcal{B}(s,Y_s)\big)dW_s,~t\in[0,T].$$

Set
\begin{eqnarray*}
\!\!\!\!\!\!\!\!&&\tau_M^X:=\inf\Bigg\{t\in[0,T]:\|X_t\|_{\mathbb{H}}+\int_0^t \|X_s\|_{\mathbb{V}}^{\alpha}ds\geq M\Bigg\}\wedge T,~~M>0,
\nonumber \\
\!\!\!\!\!\!\!\!&&\tau_M^Y:=\inf\Bigg\{t\in[0,T]:\|Y_t\|_{\mathbb{H}}+\int_0^t \|Y_s\|_{\mathbb{V}}^{\alpha}ds\geq M\Bigg\}\wedge T,~~M>0.
\end{eqnarray*}
It is clear that $\tau_M^X,\tau_M^Y$ are stopping times and $\lim_{M\to\infty}\tau_M^X=T, \lim_{M\to\infty}\tau_M^Y=T,~\mathbb{P}\text{-a.s.}$.

\vspace{1mm}
Let $\tau_M:=\tau_M^X\wedge\tau_M^Y$. Applying It\^{o}'s formula to $\|Z_t\|_{\mathbb{H}}^2$, which follows from the same argument as (\ref{ito1}), and by $(\mathbf{A_2})$ we derive
\begin{eqnarray}\label{e100}
\|Z_t\|_{\mathbb{H}}^2
=\!\!\!\!\!\!\!\!&&\int_0^t\Big(2{}_{\mathbb{V}^*}\langle \mathcal{A}(s,X_s)-\mathcal{A}(s,Y_s),Z_s\rangle_{\mathbb{V}}
\nonumber \\
\!\!\!\!\!\!\!\!&&
+\|\mathcal{B}(s,X_s)-\mathcal{B}(s,Y_s)\|_{L_2(U,\mathbb{H})}^2\Big)ds
\nonumber \\
\!\!\!\!\!\!\!\!&&+2\int_0^t\langle \big(\mathcal{B}(s,X_s)-\mathcal{B}(s,Y_s)\big)dW_s,Z_s\rangle_{\mathbb{H}}
\nonumber \\
\leq\!\!\!\!\!\!\!\!&&C\int_0^t(1+\rho(X_s)+\eta(Y_s))\|Z_s\|_{\mathbb{H}}^2ds
\nonumber \\
\!\!\!\!\!\!\!\!&&+2\int_0^t\langle \big(\mathcal{B}(s,X_s)-\mathcal{B}(s,Y_s)\big)dW_s,Z_s\rangle_{\mathbb{H}}.
\end{eqnarray}
Then, we have
\begin{equation*}
\mathbb{E}\|Z_{t\wedge\tau_M}\|_{\mathbb{H}}^2
\leq C\mathbb{E}\int_0^{t\wedge\tau_M}(1+\rho(X_s)+\eta(Y_s))\|Z_s\|_{\mathbb{H}}^2ds.
\end{equation*}
Applying stochastic Gronwall's lemma (cf.~\cite[Lemma 5.3]{GZ1}), it leads to
\begin{equation*}
\mathbb{E}\|Z_{t\wedge\tau_M}\|_{\mathbb{H}}^2
\leq 0,~t\in[0,T],
\end{equation*}
which implies that
\begin{equation}\label{e14}
\mathbb{E}\|Z_{t}\|_{\mathbb{H}}^2\leq \liminf_{M\to\infty}\mathbb{E}\|Z_{t\wedge\tau_M}\|_{\mathbb{H}}^2\leq 0,~t\in[0,T].
\end{equation}
Therefore, the pathwise uniqueness follows from (\ref{e14}) and the pathwise continuity in $\mathbb{H}$.

\hspace{\fill}$\Box$

\vspace{2mm}
Now we have all the ingredients to verify  Theorem \ref{th1}. \vspace{2mm}\\
\textbf{Proof of Theorem \ref{th1}.} By combining the results in Subsection \ref{sec2.3} with the pathwise uniqueness established above, the unique existence of strong solutions to (\ref{eqSPDE}) is a direct consequence of the infinite-dimensional version of the Yamada-Watanabe theorem, where the estimates (\ref{apri0}) and (\ref{apri6}) follow from (\ref{apri5}) and (\ref{apri2}), respectively.

Finally, the Markov property follows from standard arguments, such as those in \cite[Proposition 4.3.3]{LR1}, based on the established unique existence of solutions. This completes the proof. \hspace{\fill}$\Box$

\subsection{Proof of Feller property}\label{sec2.5}

Set the stopping time
\begin{eqnarray*}
\tau_M^{n}:=\!\!\!\!\!\!\!\!&&\inf\Bigg\{t\in[0,T]:\|X_t(x_n)\|_{\mathbb{H}}+\|X_t(x)\|_{\mathbb{H}}
\nonumber \\
\!\!\!\!\!\!\!\!&&
+\int_0^t \|X_s(x_n)\|_{\mathbb{V}}^{\alpha}ds+\int_0^t \|X_s(x)\|_{\mathbb{V}}^{\alpha}ds\geq M\Bigg\}\wedge T,~~M>0.
\end{eqnarray*}
According to the estimate (\ref{apri0}) and the convergence of $x_n$, we can deduce that
\begin{equation}\label{con4}
\lim_{M\to\infty}\sup_{n\in\mathbb{N}}\mathbb{P}(\tau_M^{n}<T)=0.
\end{equation}

In the following, we first prove the continuous dependence on initial data in probability, and then we derive the Feller property of the transition semigroup.
\vspace{2mm}\\
\textbf{Proof of Theorem \ref{th2}.} In view of the proof of (\ref{e100}), by B-D-G's inequality we have
\begin{eqnarray*}
\!\!\!\!\!\!\!\!&&\mathbb{E}\Big[\sup_{t\in[0,T\wedge\tau_M^{n}]}\|X_{t}(x_n)-X_{t}(x)\|_{\mathbb{H}}^2\Big]
\nonumber \\
\leq\!\!\!\!\!\!\!\!&&
\|x_n-x\|_{\mathbb{H}}^2+C\mathbb{E}\int_0^{T\wedge\tau_M^{n}}(1+\rho(X_s(x_n))+\eta(X_s(x)))\|X_s(x_n)-X_s(x)\|_{\mathbb{H}}^2ds
\nonumber \\
\!\!\!\!\!\!\!\!&&+C\mathbb{E}\Bigg(\int_0^{T\wedge\tau_M^{n}}\|X_s(x_n)-X_s(x)\|_{\mathbb{H}}^2\|\mathcal{B}(s,X_{s}(x_n))-\mathcal{B}(s,X_{s}(x))\|_{L_2(U,\mathbb{H})}^2ds\Bigg)^{\frac{1}{2}}
\nonumber \\
\leq\!\!\!\!\!\!\!\!&&
\|x_n-x\|_{\mathbb{H}}^2+C\mathbb{E}\int_0^{T\wedge\tau_M^{n}}(1+\rho(X_s(x_n))+\eta(X_s(x)))\|X_s(x_n)-X_s(x)\|_{\mathbb{H}}^2ds
\nonumber \\
\!\!\!\!\!\!\!\!&&+\frac{1}{2}\mathbb{E}\Big[\sup_{t\in[0,T\wedge\tau_M^{n}]}\|X_{t}(x_n)-X_{t}(x)\|_{\mathbb{H}}^2\Big]
\nonumber \\
\!\!\!\!\!\!\!\!&&+C\mathbb{E}\int_0^{T\wedge\tau_M^{n}}\|\mathcal{B}(s,X_{s}(x_n))-\mathcal{B}(s,X_{s}(x))\|_{L_2(U,\mathbb{H})}^2ds
\nonumber \\
\leq\!\!\!\!\!\!\!\!&&
\|x_n-x\|_{\mathbb{H}}^2+C\mathbb{E}\int_0^{T\wedge\tau_M^{n}}(1+\rho(X_s(x_n))+\eta(X_s(x)))\|X_s(x_n)-X_s(x)\|_{\mathbb{H}}^2ds
\nonumber \\
\!\!\!\!\!\!\!\!&&+\frac{1}{2}\mathbb{E}\Big[\sup_{t\in[0,T\wedge\tau_M^{n}]}\|X_{t}(x_n)-X_{t}(x)\|_{\mathbb{H}}^2\Big],
\end{eqnarray*}
where we used the assumption (\ref{conb4}) in the last step.

\vspace{1mm}
Applying stochastic Gronwall's lemma, we have
\begin{equation*}
\mathbb{E}\Big[\sup_{t\in[0,T\wedge\tau_M^{n}]}\|X_{t}(x_n)-X_{t}(x)\|_{\mathbb{H}}^2\Big]
\leq
C_{M}\|x_n-x\|_{\mathbb{H}}^2.
\end{equation*}
Consequently, for any $\varepsilon>0$,
\begin{eqnarray*}
\!\!\!\!\!\!\!\!&&\mathbb{P}\Big(\sup_{t\in[0,T]}\|X_{t}(x_n)-X_{t}(x)\|_{\mathbb{H}}>\varepsilon\Big)
\nonumber \\
\leq\!\!\!\!\!\!\!\!&&\mathbb{P}\Big(\sup_{t\in[0,T]}\|X_{t}(x_n)-X_{t}(x)\|_{\mathbb{H}}>\varepsilon,T\leq\tau_M^{n}\Big)+\mathbb{P}(\tau_M^{n}<T)
\nonumber \\
\leq\!\!\!\!\!\!\!\!&&\frac{1}{\varepsilon^2}\mathbb{E}\Big[\sup_{t\in[0,T\wedge\tau_M^{n}]}\|X_{t}(x_n)-X_{t}(x)\|_{\mathbb{H}}^2\Big]+\mathbb{P}(\tau_M^{n}<T)
\nonumber \\
\leq\!\!\!\!\!\!\!\!&&\frac{C_{M}}{\varepsilon^2}\|x_n-x\|_{\mathbb{H}}^2+\mathbb{P}(\tau_M^{n}<T).
\end{eqnarray*}
Taking (\ref{con4}) into account, we conclude that (\ref{apri3}) holds.

\vspace{2mm}
Now we prove the Feller property of the transition semigroup. For any $t\geq 0$ and $\varphi\in C_b(\mathbb{H})$, we show
$\mathcal{T}_t\varphi\in C_b(\mathbb{H})$, i.e.,
\begin{equation}\label{feller}
\mathbb{E}\varphi(X_t(x_n))\to \mathbb{E}\varphi(X_t(x)),~ \text{if}~ x_n\to x~\text{in}~ \mathbb{H}.
\end{equation}
Note that by (\ref{apri3}),  we have
$$X_{t}(x_n)\to X_{t}(x)\quad\text{in probability as}~n\to\infty.$$  Since $ \varphi\in C_b(\mathbb{H})$, we derive
$$\varphi(X_{t}(x_n))\to \varphi(X_{t}(x))\quad \text{in probability as} ~n\to\infty.$$ Consequently,  (\ref{feller}) follows from the Lebesgue dominated convergence theorem.
We complete the proof of Theorem \ref{th2}.
\hspace{\fill}$\Box$

\subsection{Proof of finite-time extinction }\label{sec4.6}

In this subsection, we intend to prove Theorem \ref{th3}.
We first present the following lemma.

\begin{lemma}
 Suppose that the assumptions in Theorem \ref{th3} hold. Then there is a constant $C>0$ such that for any $t\geq 0$,
 \begin{equation}\label{e116}
\mathbb{E}(1+\|X_t\|_{\mathbb{H}}^2)^{1-\frac{\alpha}{2}}
\leq e^{Ct}(1+\|x\|_{\mathbb{H}}^2)^{1-\frac{\alpha}{2}}.
\end{equation}

\end{lemma}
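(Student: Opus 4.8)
The plan is to apply It\^o's formula to the $C^2$ function $\Phi(r):=(1+r)^{1-\frac{\alpha}{2}}$ evaluated at $r=\|X_t\|_{\mathbb{H}}^2$, using that $\alpha\in(1,2)$ so that $1-\frac{\alpha}{2}\in(0,\tfrac12)$ and $\Phi$ is bounded on bounded sets, concave, with $\Phi'(r)=(1-\frac{\alpha}{2})(1+r)^{-\frac{\alpha}{2}}>0$ and $\Phi''(r)=(1-\frac{\alpha}{2})(-\frac{\alpha}{2})(1+r)^{-1-\frac{\alpha}{2}}<0$. First I would recall from Theorem~\ref{th1} that $X$ is the (pathwise unique, probabilistically strong) solution with $X\in\mathbb{C}_T(\mathbb{H})$ and $\int_0^T\|X_t\|_{\mathbb{V}}^\alpha\,dt<\infty$ a.s., so the It\^o formula for $\|X_t\|_{\mathbb{H}}^2$ in the variational framework (as in \cite[Theorem 4.2.5]{LR1}) applies, and then compose with $\Phi$. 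Writing $b_t:=\|\mathcal{B}(t,X_t)\|_{L_2(U,\mathbb{H})}^2$ and using $(\mathbf{A_3^*})$ together with $\|(\mathcal{B}(t,X_t))^*X_t\|_U^2\le b_t\|X_t\|_{\mathbb{H}}^2$, the drift part of $d\Phi(\|X_t\|_{\mathbb{H}}^2)$ is bounded above by
\begin{equation*}
\Phi'\big(\|X_t\|_{\mathbb{H}}^2\big)\Big(g(\|X_t\|_{\mathbb{H}}^2)-\delta\|X_t\|_{\mathbb{V}}^\alpha+b_t\Big)
+\tfrac12\Phi''\big(\|X_t\|_{\mathbb{H}}^2\big)\,4\,\|(\mathcal{B}(t,X_t))^*X_t\|_U^2 .
\end{equation*}

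The key algebraic step is to show the bracketed expression has the right sign up to a term controlled by $C\,\Phi(\|X_t\|_{\mathbb{H}}^2)$. Dropping the favourable $-\delta\Phi'\|X_t\|_{\mathbb{V}}^\alpha\le 0$ term, and setting $y=\|X_t\|_{\mathbb{H}}^2$, the remaining drift is at most
\begin{equation*}
(1-\tfrac{\alpha}{2})(1+y)^{-\frac{\alpha}{2}}\big(g(y)+b_t\big)
-2(1-\tfrac{\alpha}{2})\tfrac{\alpha}{2}(1+y)^{-1-\frac{\alpha}{2}}\,b_t\,y .
\end{equation*}
Using $\frac{y}{1+y}=1-\frac{1}{1+y}$, the coefficient of $b_t$ becomes $(1-\tfrac{\alpha}{2})(1+y)^{-\frac{\alpha}{2}}\big(1-\alpha\frac{y}{1+y}\big)=(1-\tfrac{\alpha}{2})(1+y)^{-\frac{\alpha}{2}}\big(1-\alpha+\frac{\alpha}{1+y}\big)$; since $1-\alpha<0$ this is $\le (1-\tfrac{\alpha}{2})(1+y)^{-\frac{\alpha}{2}}\cdot\frac{\alpha}{1+y}\le \alpha\,\Phi(y)$ (crudely bounding $(1-\tfrac{\alpha}{2})(1+y)^{-1}\le 1$ inside). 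For the $g(y)$ term I would invoke $(\mathbf{A_5^*})$, namely $g(y)\le(\alpha-1)b_t$, to absorb it into the (now sign-controlled) $b_t$ contribution — more precisely combine $g(y)$ and $b_t$ and re-run the computation with $g(y)+b_t\le\alpha\,b_t$, so that the whole drift is bounded by $C\,\Phi(y)$ for a constant $C=C(\alpha)$; here the cancellation between the first-order term (from $\Phi'$) and the It\^o correction term (from $\Phi''$, which is negative and hence helps) is exactly what makes the supermartingale-type estimate work. This is the step I expect to be the main obstacle: getting the constants to line up so that no positive multiple of $b_t$ or $\|X_t\|_{\mathbb{V}}^\alpha$ survives — the choice $\eta=1-\alpha$ in $(\mathbf{A_5})$ and the hypothesis $(\mathbf{A_5^*})$ are precisely tailored for this, so the bookkeeping must track them carefully.

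Once the drift is dominated by $C\,\Phi(\|X_t\|_{\mathbb{H}}^2)$, I would take expectations, noting that the stochastic integral term $\int_0^t \Phi'(\|X_s\|_{\mathbb{H}}^2)\,2\langle X_s,\mathcal{B}(s,X_s)\,dW_s\rangle_{\mathbb{H}}$ is a genuine (at least local) martingale: its quadratic variation is controlled since $\Phi'$ is bounded and, by $(\mathbf{A_5})$/\eqref{conb2}, $\|\mathcal{B}(s,X_s)\|_{L_2(U,\mathbb{H})}^2\|X_s\|_{\mathbb{H}}^2\lesssim(1+\|X_s\|_{\mathbb{H}}^2)^2$ which has finite expectation after localizing by the stopping times $\tau_M$ from the uniqueness proof; one then passes to the limit $M\to\infty$ using $\Phi\ge0$, Fatou on one side and dominated/monotone convergence on the other, exactly as in the pathwise-uniqueness argument. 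This yields
\begin{equation*}
\mathbb{E}\,\Phi\big(\|X_t\|_{\mathbb{H}}^2\big)\le \Phi\big(\|x\|_{\mathbb{H}}^2\big)+C\int_0^t \mathbb{E}\,\Phi\big(\|X_s\|_{\mathbb{H}}^2\big)\,ds,
\end{equation*}
and Gronwall's inequality gives $\mathbb{E}(1+\|X_t\|_{\mathbb{H}}^2)^{1-\frac{\alpha}{2}}\le e^{Ct}(1+\|x\|_{\mathbb{H}}^2)^{1-\frac{\alpha}{2}}$, which is \eqref{e116}. A minor technical point to handle cleanly is the justification of It\^o's formula for $\Phi(\|\cdot\|_{\mathbb{H}}^2)$ in the Gelfand-triple setting: since $\Phi$ is globally $C^2$ with bounded first derivative, this follows by composing the standard variational It\^o formula with $\Phi$, or alternatively by first stopping at $\tau_M$ where everything is bounded and then removing the stopping.
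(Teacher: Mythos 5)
Your framework --- It\^o's formula for $\Phi(r)=(1+r)^{1-\alpha/2}$, dropping the favourable coercivity term, Cauchy--Schwarz $\|\mathcal{B}^*X\|_U^2\le b_t\|X\|_{\mathbb{H}}^2$ in the It\^o correction, localization plus Gronwall --- matches the paper's proof. The gap is in the central algebraic step where you identify the Gronwall-ready drift bound. Once you replace $g(y)+b_t$ by $\alpha b_t$ via $(\mathbf{A_5^*})$ and re-run the telescoping, the drift that remains is
\begin{equation*}
\alpha\Bigl(1-\tfrac{\alpha}{2}\Bigr)\,b_t\,(1+y)^{-1-\frac{\alpha}{2}},
\end{equation*}
so to reach $C\Phi(y)=C(1+y)^{1-\frac{\alpha}{2}}$ you would need $b_t\le C(1+y)^2$. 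This does not follow from the hypotheses: $(\mathbf{A_5})$ with $\eta=1-\alpha<0$ only controls the combination $g(y)(1+y)+b_t\bigl(1+(1-\alpha)y\bigr)$, and since $1+(1-\alpha)y$ is negative for large $y$ it yields no upper bound on $b_t$ alone; the growth condition on $\mathcal{B}$ allows $\beta$ to be arbitrarily large, and in the paper's $p$-Laplace and Navier--Stokes examples $b_t$ indeed grows much faster than $(1+y)^2$. Your earlier crude bound ``coefficient of $b_t$ is $\le\alpha\Phi(y)$'' has the same defect: it leaves $\alpha\Phi(y)\,b_t$ in the drift, which is not of Gronwall form unless $b_t$ is bounded.

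The paper's route avoids this by never decoupling $g$ from $b_t$. Factoring $(1-\tfrac{\alpha}{2})(1+y)^{-1-\alpha/2}$ out of the two drift terms produces the bracket
\begin{equation*}
g(y)(1+y)+b_t\bigl(1+(1-\alpha)y\bigr),
\end{equation*}
which is \emph{exactly} the left-hand side of $(\mathbf{A_5})$ with $\eta=1-\alpha$, hence $\le C(1+y)^2$; the drift is therefore $\le C(1-\tfrac{\alpha}{2})(1+y)^{1-\frac{\alpha}{2}}=C'\Phi(y)$ and Gronwall closes the estimate. In particular $(\mathbf{A_5^*})$ plays no role in this lemma; it enters only in the subsequent pointwise supermartingale estimate that yields finite-time extinction.
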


\begin{proof}
Recall that for any $t\geq 0$,
\begin{eqnarray*}
\|X_t\|_{\mathbb{H}}^2
=\!\!\!\!\!\!\!\!&&\|x\|_{\mathbb{H}}^2+\int_0^t\Big(2{}_{\mathbb{V}^*}\langle \mathcal{A}(s,X_s),X_s\rangle_{\mathbb{V}}
+\|\mathcal{B}(s,X_s)\|_{L_2(U,\mathbb{H})}^2\Big)ds
\nonumber \\
\!\!\!\!\!\!\!\!&&+2\int_0^t\langle X_s,\mathcal{B}(s,X_s)dW_s\rangle_{\mathbb{H}}.
\end{eqnarray*}
Applying It\^{o}'s formula for the Lyapunov function $V(r):=(1+r)^{1-\frac{\alpha}{2}}$, by $(\mathbf{A_3^*})$ for any $t\geq 0$,
\begin{eqnarray}\label{e115}
\!\!\!\!\!\!\!\!&&(1+\|X_t\|_{\mathbb{H}}^2)^{1-\frac{\alpha}{2}}
\nonumber \\
\leq\!\!\!\!\!\!\!\!&&(1+\|x\|_{\mathbb{H}}^2)^{1-\frac{\alpha}{2}}-\delta(1-\frac{\alpha}{2})\int_0^t \frac{\|X_s\|_{\mathbb{V}}^{\alpha}}{(1+\|X_s\|_{\mathbb{H}}^2)^{\frac{\alpha}{2}}}ds
\nonumber \\
\!\!\!\!\!\!\!\!&&
+(1-\frac{\alpha}{2})\int_0^t \frac{g(\|X_s\|_{\mathbb{H}}^2)+\|\mathcal{B}(s,X_s)\|_{L_2(U,\mathbb{H})}^2}{(1+\|X_s\|_{\mathbb{H}}^2)^{\frac{\alpha}{2}}}ds
\nonumber \\
\!\!\!\!\!\!\!\!&&-\frac{\alpha}{2}(1-\frac{\alpha}{2})\int_0^t \frac{2\|\mathcal{B}(s,X_s)^*X_s\|_{U}^2}{(1+\|X_s\|_{\mathbb{H}}^2)^{\frac{\alpha}{2}+1}}ds+\mathcal{M}_t^0
\nonumber \\
=\!\!\!\!\!\!\!\!&&(1+\|x\|_{\mathbb{H}}^2)^{1-\frac{\alpha}{2}}-\delta(1-\frac{\alpha}{2})\int_0^t \frac{\|X_s\|_{\mathbb{V}}^{\alpha}}{(1+\|X_s\|_{\mathbb{H}}^2)^{\frac{\alpha}{2}}}ds+\mathcal{M}_{t}^0
\nonumber \\
\!\!\!\!\!\!\!\!&&
~~~+(1-\frac{\alpha}{2})\int_0^t \frac{\big(g(\|X_s\|_{\mathbb{H}}^2)+\|\mathcal{B}(s,X_s)\|_{L_2(U,\mathbb{H})}^2\big)(1+\|X_s\|_{\mathbb{H}}^2)}{(1+\|X_s\|_{\mathbb{H}}^2)^{\frac{\alpha}{2}+1}}ds
\nonumber \\
\!\!\!\!\!\!\!\!&&
-(1-\frac{\alpha}{2})\int_0^t \frac{\alpha\|\mathcal{B}(s,X_s)^*X_s\|_{U}^2}{(1+\|X_s\|_{\mathbb{H}}^2)^{\frac{\alpha}{2}+1}}ds,
\end{eqnarray}
where
$$\mathcal{M}_{t}^0:=2(1-\frac{\alpha}{2})\int_0^t \frac{\langle X_s,\mathcal{B}(s,X_s)dW_s\rangle_{\mathbb{H}}}{(1+\|X_s\|_{\mathbb{H}}^2)^{\frac{\alpha}{2}}}.$$
Taking into account the assumption (\ref{conb1}) and (\ref{e115}),  we can get
\begin{equation*}
(1+\|X_t\|_{\mathbb{H}}^2)^{1-\frac{\alpha}{2}}
\leq(1+\|x\|_{\mathbb{H}}^2)^{1-\frac{\alpha}{2}}
+C\int_0^t (1+\|X_s\|_{\mathbb{H}}^2)^{1-\frac{\alpha}{2}}ds+\mathcal{M}_{t}^0.
\end{equation*}
By the standard localization argument and the estimate (\ref{apri0}), Gronwall' lemma implies
\begin{equation*}
\mathbb{E}(1+\|X_t\|_{\mathbb{H}}^2)^{1-\frac{\alpha}{2}}
\leq e^{Ct}(1+\|x\|_{\mathbb{H}}^2)^{1-\frac{\alpha}{2}}.
\end{equation*}
We complete the proof.
\end{proof}

The following lemma plays an important role for proving (\ref{fte1}) and (\ref{fte2}).
\begin{lemma}
 Suppose that the assumptions in Theorem \ref{th3} hold. Then there is $c^*>0$ such that for any $0\leq r<t$,
\begin{eqnarray}\label{e113}
\!\!\!\!\!\!\!\!&&\|X_t\|_{\mathbb{H}}^{2-\alpha}+\delta(c^*)^{\alpha}(1-\frac{\alpha}{2})\int_r^t \mathbf{1}_{\{\|X_s\|_\mathbb{H}>0\}}ds
\nonumber \\
\leq\!\!\!\!\!\!\!\!&&
\|X_r\|_{\mathbb{H}}^{2-\alpha}+2(1-\frac{\alpha}{2})\int_r^t \mathbf{1}_{\{\|X_s\|_\mathbb{H}>0\}}\frac{\langle X_s,\mathcal{B}(s,X_s)dW_s\rangle_{\mathbb{H}}}{\|X_s\|_{\mathbb{H}}^\alpha}.
\end{eqnarray}
\end{lemma}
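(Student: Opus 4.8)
The plan is to apply Itô's formula directly to the function $V(r) := r^{1-\frac{\alpha}{2}}$ applied to $\|X_t\|_{\mathbb{H}}^2$, but only on the set where $\|X_s\|_{\mathbb{H}}>0$; this is the natural continuation of the previous lemma, where the regularized Lyapunov function $(1+r)^{1-\frac{\alpha}{2}}$ was used to avoid the singularity at the origin. Since $1<\alpha<2$, the exponent $1-\frac{\alpha}{2}$ lies in $(0,1)$, so $V$ is concave, $C^2$ on $(0,\infty)$, and singular at $0$. The first step is therefore to introduce the approximations $V_\epsilon(r) := (\epsilon+r)^{1-\frac{\alpha}{2}}$ for $\epsilon>0$, apply Itô's formula to $V_\epsilon(\|X_t\|_{\mathbb{H}}^2)$ (which is legitimate by the Itô formula in the variational setting, since $X\in\mathbb{C}_T(\mathbb{H})\cap L^\alpha([0,T];\mathbb{V})$ a.s.\ by Theorem \ref{th1}), and then pass to the limit $\epsilon\downarrow 0$.

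Carrying out the Itô expansion of $V_\epsilon(\|X_t\|_{\mathbb{H}}^2)$, exactly as in the computation leading to \eref{e115}, gives
\begin{align*}
(\epsilon+\|X_t\|_{\mathbb{H}}^2)^{1-\frac{\alpha}{2}}
={}&(\epsilon+\|X_r\|_{\mathbb{H}}^2)^{1-\frac{\alpha}{2}}
-\delta\Big(1-\frac{\alpha}{2}\Big)\int_r^t \frac{\|X_s\|_{\mathbb{V}}^{\alpha}}{(\epsilon+\|X_s\|_{\mathbb{H}}^2)^{\frac{\alpha}{2}}}\,ds\\
&+\Big(1-\frac{\alpha}{2}\Big)\int_r^t \frac{g(\|X_s\|_{\mathbb{H}}^2)(\epsilon+\|X_s\|_{\mathbb{H}}^2)+\|\mathcal{B}(s,X_s)\|_{L_2(U,\mathbb{H})}^2(\epsilon+(1-\alpha)\|X_s\|_{\mathbb{H}}^2)}{(\epsilon+\|X_s\|_{\mathbb{H}}^2)^{\frac{\alpha}{2}+1}}\,ds
+\mathcal{M}_t^{0,\epsilon},
\end{align*}
with $\mathcal{M}_t^{0,\epsilon}:=2(1-\frac{\alpha}{2})\int_r^t (\epsilon+\|X_s\|_{\mathbb{H}}^2)^{-\frac{\alpha}{2}}\langle X_s,\mathcal{B}(s,X_s)\,dW_s\rangle_{\mathbb{H}}$. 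The drift correction term coming from the second derivative is negative because $V_\epsilon$ is concave, which is why it has already been discarded (bounded above) in the display above. Now the crucial algebraic point is that under $(\mathbf{A_3^*})$ with $g(0)=0$ and $(\mathbf{A_5^*})$, namely $g(\|u\|_{\mathbb{H}}^2)\le(\alpha-1)\|\mathcal{B}(t,u)\|_{L_2(U,\mathbb{H})}^2$, the combination $g(\|X_s\|_{\mathbb{H}}^2)\|X_s\|_{\mathbb{H}}^2 + (1-\alpha)\|\mathcal{B}(s,X_s)\|_{L_2(U,\mathbb{H})}^2\|X_s\|_{\mathbb{H}}^2 \le 0$; the only surviving contribution in the numerator of the $g/\mathcal{B}$-integral is the $O(\epsilon)$ piece $(g(\|X_s\|_{\mathbb{H}}^2)+\|\mathcal{B}(s,X_s)\|_{L_2(U,\mathbb{H})}^2)\epsilon$, which vanishes as $\epsilon\downarrow 0$ on the set $\{\|X_s\|_{\mathbb{H}}>0\}$ and is controlled using \eref{conb2} and the integrability \eref{apri0}. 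For the coercivity term I would use $(\mathbf{A_3^*})$ again together with a Poincaré-type inequality: since the embedding $\mathbb{V}\subset\mathbb{H}$ is compact (hence continuous), there is $c^*>0$ with $\|u\|_{\mathbb{H}}\le (c^*)^{-1}\|u\|_{\mathbb{V}}$, equivalently $\|u\|_{\mathbb{V}}^\alpha\ge (c^*)^\alpha\|u\|_{\mathbb{H}}^\alpha$; therefore on $\{\|X_s\|_{\mathbb{H}}>0\}$,
\[
\frac{\|X_s\|_{\mathbb{V}}^{\alpha}}{(\epsilon+\|X_s\|_{\mathbb{H}}^2)^{\frac{\alpha}{2}}}\ \xrightarrow[\epsilon\downarrow0]{}\ \frac{\|X_s\|_{\mathbb{V}}^{\alpha}}{\|X_s\|_{\mathbb{H}}^{\alpha}}\ \ge\ (c^*)^\alpha.
\]

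The final step is the limit passage $\epsilon\downarrow 0$. The left side converges pointwise to $\|X_t\|_{\mathbb{H}}^{2-\alpha}$ and the first term on the right to $\|X_r\|_{\mathbb{H}}^{2-\alpha}$ by continuity; for the coercivity integral I would apply Fatou's lemma together with the lower bound just derived and the indicator $\mathbf 1_{\{\|X_s\|_{\mathbb{H}}>0\}}$ to get $\liminf_\epsilon \delta(1-\frac{\alpha}{2})\int_r^t \|X_s\|_{\mathbb{V}}^\alpha(\epsilon+\|X_s\|_{\mathbb{H}}^2)^{-\alpha/2}\,ds \ge \delta(c^*)^\alpha(1-\frac{\alpha}{2})\int_r^t\mathbf 1_{\{\|X_s\|_{\mathbb{H}}>0\}}\,ds$; the $g/\mathcal{B}$ integral is nonpositive up to the $O(\epsilon)$ remainder, which goes to zero by dominated convergence; and the martingale term $\mathcal{M}_t^{0,\epsilon}$ converges in probability to $2(1-\frac{\alpha}{2})\int_r^t \mathbf 1_{\{\|X_s\|_{\mathbb{H}}>0\}}\|X_s\|_{\mathbb{H}}^{-\alpha}\langle X_s,\mathcal{B}(s,X_s)\,dW_s\rangle_{\mathbb{H}}$ — here one uses that $\|\mathcal{B}(s,X_s)\|_{L_2(U,\mathbb{H})}=0$ when $\|X_s\|_{\mathbb{H}}=0$ by $(\mathbf{A_5^*})$, so the integrand is well-defined and the quadratic variations converge. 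I expect the main obstacle to be precisely this limit passage in the stochastic integral and in the singular coercivity term: one must verify along a subsequence that the approximating integrands converge $\mathbb{P}\otimes ds$-a.e.\ and are dominated well enough to invoke the relevant convergence theorems, and one must be careful that the integrals restricted to $\{\|X_s\|_{\mathbb{H}}>0\}$ genuinely absorb all the boundary contributions without leaving an uncontrolled residual at $\{\|X_s\|_{\mathbb{H}}=0\}$. Assembling these estimates and rearranging yields \eref{e113}.
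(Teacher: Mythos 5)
Your proposal follows exactly the paper's argument: apply Itô's formula to the regularized Lyapunov function $V_\varepsilon(r)=(\varepsilon+r)^{1-\alpha/2}$, combine the diffusion and quadratic-variation contributions and control them via $(\mathbf{A_5^*})$, lower-bound the coercivity integrand via $\|u\|_{\mathbb V}\geq c^*\|u\|_{\mathbb H}$, and pass to the limit $\varepsilon\downarrow 0$ after inserting indicators (justified since $g(0)=0$ and $\mathcal{B}$ vanishes at $0$). The only cosmetic difference is that you invoke Fatou's lemma explicitly for the limit of the coercivity integral and slightly misdescribe the second-derivative term as ``discarded'' when it is actually retained and combined in the display, but the mathematics matches the paper's proof.
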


%


\begin{proof}
For any $\varepsilon>0$, using It\^{o}'s formula for the Lyapunov function $V^{\varepsilon}(r):=(\varepsilon+r)^{1-\frac{\alpha}{2}}$, then by $(\mathbf{A_3^*})$ we can obtain  that for any $0\leq r<t$,
\begin{eqnarray}\label{e111}
\!\!\!\!\!\!\!\!&&(\varepsilon+\|X_t\|_{\mathbb{H}}^2)^{1-\frac{\alpha}{2}}
\nonumber \\
\leq\!\!\!\!\!\!\!\!&&(\varepsilon+\|X_r\|_{\mathbb{H}}^2)^{1-\frac{\alpha}{2}}-\delta(1-\frac{\alpha}{2})\int_r^t \frac{\|X_s\|_{\mathbb{V}}^{\alpha}}{(\varepsilon+\|X_s\|_{\mathbb{H}}^2)^{\frac{\alpha}{2}}}ds
\nonumber \\
\!\!\!\!\!\!\!\!&&
+(1-\frac{\alpha}{2})\int_r^t \frac{g(\|X_s\|_{\mathbb{H}}^2)+\|\mathcal{B}(s,X_s)\|_{L_2(U,\mathbb{H})}^2}{(\varepsilon+\|X_s\|_{\mathbb{H}}^2)^{\frac{\alpha}{2}}}ds
\nonumber \\
\!\!\!\!\!\!\!\!&&-\frac{\alpha}{2}(1-\frac{\alpha}{2})\int_r^t \frac{2\|\mathcal{B}(s,X_s)^*X_s\|_{U}^2}{(\varepsilon+\|X_s\|_{\mathbb{H}}^2)^{\frac{\alpha}{2}+1}}   ds+\mathcal{M}^{\varepsilon}_{r,t}
\nonumber \\
=\!\!\!\!\!\!\!\!&&(\varepsilon+\|X_r\|_{\mathbb{H}}^2)^{1-\frac{\alpha}{2}}-\delta(1-\frac{\alpha}{2})\int_r^t \frac{\|X_s\|_{\mathbb{V}}^{\alpha}}{(\varepsilon+\|X_s\|_{\mathbb{H}}^2)^{\frac{\alpha}{2}}}ds
\nonumber \\
\!\!\!\!\!\!\!\!&&
+(1-\frac{\alpha}{2})\int_r^t \frac{\big(g(\|X_s\|_{\mathbb{H}}^2)+\|\mathcal{B}(s,X_s)\|_{L_2(U,\mathbb{H})}^2\big)(\varepsilon+\|X_s\|_{\mathbb{H}}^2)}{(\varepsilon+\|X_s\|_{\mathbb{H}}^2)^{\frac{\alpha}{2}+1}}ds
\nonumber \\
\!\!\!\!\!\!\!\!&&-(1-\frac{\alpha}{2})\int_r^t \frac{\alpha\|\mathcal{B}(s,X_s)^*X_s\|_{U}^2}{(\varepsilon+\|X_s\|_{\mathbb{H}}^2)^{\frac{\alpha}{2}+1}}ds+\mathcal{M}^{\varepsilon}_{r,t},
\end{eqnarray}
where
$$\mathcal{M}^{\varepsilon}_{r,t}:=2(1-\frac{\alpha}{2})\int_r^t \frac{\langle X_s,\mathcal{B}(s,X_s)dW_s\rangle_{\mathbb{H}}}{(\varepsilon+\|X_s\|_{\mathbb{H}}^2)^{\frac{\alpha}{2}}}.$$
Recall the fact that there is a constant $c^*>0$ such that $ \|u\|_{\mathbb{V}}\geq c^*\|u\|_{\mathbb{H}} $. Then, it follows that
\begin{eqnarray*}
\!\!\!\!\!\!\!\!&&(\varepsilon+\|X_t\|_{\mathbb{H}}^2)^{1-\frac{\alpha}{2}}+\delta(c^*)^{\alpha}(1-\frac{\alpha}{2})\int_r^t \frac{\|X_s\|_{\mathbb{H}}^{\alpha}}{(\varepsilon+\|X_s\|_{\mathbb{H}}^2)^{\frac{\alpha}{2}}}ds
\nonumber \\
\leq\!\!\!\!\!\!\!\!&&(\varepsilon+\|X_r\|_{\mathbb{H}}^2)^{1-\frac{\alpha}{2}}+\mathcal{M}^{\varepsilon}_{r,t}
\nonumber \\
\!\!\!\!\!\!\!\!&&+
(1-\frac{\alpha}{2})\int_r^t \frac{\big(g(\|X_s\|_{\mathbb{H}}^2)+\|\mathcal{B}(s,X_s)\|_{L_2(U,\mathbb{H})}^2\big)(\varepsilon+\|X_s\|_{\mathbb{H}}^2)}{(\varepsilon+\|X_s\|_{\mathbb{H}}^2)^{\frac{\alpha}{2}+1}}ds
\nonumber \\
\!\!\!\!\!\!\!\!&&-(1-\frac{\alpha}{2})\int_r^t \frac{\alpha\|\mathcal{B}(s,X_s)^*X_s\|_{U}^2}{(\varepsilon+\|X_s\|_{\mathbb{H}}^2)^{\frac{\alpha}{2}+1}}ds.
\end{eqnarray*}
Then, due to $(\mathbf{A_5^*})$ we deduce that
\begin{eqnarray}\label{e112}
\!\!\!\!\!\!\!\!&&(\varepsilon+\|X_t\|_{\mathbb{H}}^2)^{1-\frac{\alpha}{2}}+\delta(c^*)^{\alpha}(1-\frac{\alpha}{2})\int_r^t \frac{\|X_s\|_{\mathbb{H}}^{\alpha}}{(\varepsilon+\|X_s\|_{\mathbb{H}}^2)^{\frac{\alpha}{2}}}\mathbf{1}_{\{\|X_s\|_\mathbb{H}>0\}}ds
\nonumber \\
\leq\!\!\!\!\!\!\!\!&&(\varepsilon+\|X_r\|_{\mathbb{H}}^2)^{1-\frac{\alpha}{2}}+2(1-\frac{\alpha}{2})\int_r^t \mathbf{1}_{\{\|X_s\|_\mathbb{H}>0\}}\frac{\langle X_s,\mathcal{B}(s,X_s)dW_s\rangle_{\mathbb{H}}}{(\varepsilon+\|X_s\|_{\mathbb{H}}^2)^{\frac{\alpha}{2}}}
\nonumber \\
\!\!\!\!\!\!\!\!&&+
(1-\frac{\alpha}{2})\int_r^t \frac{g(\|X_s\|_{\mathbb{H}}^2)(\varepsilon+\|X_s\|_{\mathbb{H}}^2)}{(\varepsilon+\|X_s\|_{\mathbb{H}}^2)^{\frac{\alpha}{2}+1}}\mathbf{1}_{\{\|X_s\|_\mathbb{H}>0\}}ds
\nonumber \\
\!\!\!\!\!\!\!\!&&+
(1-\frac{\alpha}{2})\int_r^t \frac{\|\mathcal{B}(s,X_s)\|_{L_2(U,\mathbb{H})}^2(\varepsilon+\|X_s\|_{\mathbb{H}}^2)}{(\varepsilon+\|X_s\|_{\mathbb{H}}^2)^{\frac{\alpha}{2}+1}}\mathbf{1}_{\{\|X_s\|_\mathbb{H}>0\}}ds
\nonumber \\
~~~~\!\!\!\!\!\!\!\!&&~~~-(1-\frac{\alpha}{2})\int_r^t \frac{\alpha\|\mathcal{B}(s,X_s)^*X_s\|_{U}^2}{(\varepsilon+\|X_s\|_{\mathbb{H}}^2)^{\frac{\alpha}{2}+1}}\mathbf{1}_{\{\|X_s\|_\mathbb{H}>0\}}ds.
\end{eqnarray}
Taking $\varepsilon\to 0$ on both sides of (\ref{e112}), we get
\begin{eqnarray*}
\!\!\!\!\!\!\!\!&&\|X_t\|_{\mathbb{H}}^{2-\alpha}+\delta(c^*)^{\alpha}(1-\frac{\alpha}{2})\int_r^t \mathbf{1}_{\{\|X_s\|_\mathbb{H}>0\}}ds
\nonumber \\
\leq\!\!\!\!\!\!\!\!&&\|X_r\|_{\mathbb{H}}^{2-\alpha}+\mathcal{M}_{r,t}
\nonumber \\
\!\!\!\!\!\!\!\!&&+
(1-\frac{\alpha}{2})\int_r^t \frac{\big(g(\|X_s\|_{\mathbb{H}}^2)+\|\mathcal{B}(s,X_s)\|_{L_2(U,\mathbb{H})}^2\big)\|X_s\|_{\mathbb{H}}^2}{\|X_s\|_{\mathbb{H}}^{\alpha+2}}\mathbf{1}_{\{\|X_s\|_\mathbb{H}>0\}}ds
\nonumber \\
\!\!\!\!\!\!\!\!&&
-(1-\frac{\alpha}{2})\int_r^t \frac{\alpha\|\mathcal{B}(s,X_s)^*X_s\|_{U}^2}{\|X_s\|_{\mathbb{H}}^{\alpha+2}}\mathbf{1}_{\{\|X_s\|_\mathbb{H}>0\}}ds,
\end{eqnarray*}
where
$$\mathcal{M}_{r,t}:=2(1-\frac{\alpha}{2})\int_r^t \mathbf{1}_{\{\|X_s\|_\mathbb{H}>0\}}\frac{\langle X_s,\mathcal{B}(s,X_s)dW_s\rangle_{\mathbb{H}}}{\|X_s\|_{\mathbb{H}}^\alpha}.$$
According to the assumption $(\mathbf{A_5^*})$, we have
\begin{equation*}
\|X_t\|_{\mathbb{H}}^{2-\alpha}+\delta(c^*)^{\alpha}(1-\frac{\alpha}{2})\int_r^t \mathbf{1}_{\{\|X_s\|_\mathbb{H}>0\}}ds
\leq\|X_r\|_{\mathbb{H}}^{2-\alpha}+\mathcal{M}_{r,t}.
\end{equation*}
We complete the proof.
\end{proof}

\noindent\textbf{Proof of Theorem \ref{th3}.} \textbf{Step 1.} first prove (\ref{fte1}). For any $0\leq r<t$, by (\ref{e116}) and (\ref{e113}) we deduce
\begin{equation}\label{e114}
\mathbb{E}\big[\|X_t\|_{\mathbb{H}}^{2-\alpha}|\mathscr{F}_r\big]
\leq\|X_r\|_{\mathbb{H}}^{2-\alpha}+\mathbb{E}\big[\mathcal{M}_{r,t}|\mathscr{F}_r\big]=\|X_r\|_{\mathbb{H}}^{2-\alpha},
\end{equation}
which implies  that the process
$$t\mapsto\|X_t\|_{\mathbb{H}}^{2-\alpha}$$
is an $(\mathscr{F}_t)$-nonnegative supermartingale. This combining with (\ref{e116}) yields that for every pair of stopping times $\tau^1<\tau^2$,
$$\mathbb{E}\|X_{\tau^2}\|_{\mathbb{H}}^{2-\alpha}\leq \mathbb{E}\|X_{\tau^1}\|_{\mathbb{H}}^{2-\alpha}.$$
In particular, for any $t>\tau_{e}$, we have
$$\mathbb{E}\|X_t\|_{\mathbb{H}}^{2-\alpha}\leq \mathbb{E}\|X_{\tau_{e}}\|_{\mathbb{H}}^{2-\alpha}=0.$$
Thus, it follows that for any $t\geq\tau_{e}$
$$\|X_t\|_{\mathbb{H}}=0,~\mathbb{P}\text{-a.s.}.$$

\vspace{1mm}
\noindent \textbf{Step 2.}  As for (\ref{fte2}) and (\ref{fte3}), we set $r=0$ and take expectation on both sides of (\ref{e113}), by a standard localization argument we obtain that for all $t\geq 0$,
\begin{equation*}
\delta(c^*)^{\alpha}(1-\frac{\alpha}{2})\int_0^t \mathbb{P}(\tau_{e}>s)ds
\leq\|x\|_{\mathbb{H}}^{2-\alpha}.
\end{equation*}
This implies
\begin{equation*}
\mathbb{P}(\tau_{e}>T)
\leq\|x\|_{\mathbb{H}}^{2-\alpha}\Big/\big(\delta(c^*)^{\alpha}(1-\frac{\alpha}{2})\big)T.
\end{equation*}
Letting $T\to\infty$, we derive
\begin{equation*}
\mathbb{P}(\tau_{e}<\infty)
=1.
\end{equation*}

On the other hand, we set  $r=0$ and $t=\tau_{e}$ and take expectation on both sides of (\ref{e113}) to obtain
$$\delta(c^*)^{\alpha}(1-\frac{\alpha}{2})\mathbb{E}\tau_{e}\leq\|x\|_{\mathbb{H}}^{2-\alpha}.  $$
We complete the proof of (\ref{fte2}) and (\ref{fte3}) with $c_0=\frac{1}{\delta(c^*)^{\alpha}(1-\frac{\alpha}{2})}$.  \hspace{\fill}$\Box$

\section{Appendix}\label{appendix}

The classical Skorokhod theorem can only be applied in metric space. In this work, we use the following Jakubowski's version of the Skorokhod theorem in the form presented by Brze\'{z}niak and Ondrej\'{a}t \cite{BO}.
\begin{lemma}\label{sko1}$($Skorokhod Theorem$)$
Let $\mathcal{Y}$ be a topological space such that there exists a sequence of continuous functions $f_m:\mathcal{Y}\to \mathbb{R}$ that separates points of  $\mathcal{Y}$. Let us denote by $\mathscr{S}$ the $\sigma$-algebra generated by the maps $f_m$. Then

(i) every compact subset of $\mathcal{Y}$ is metrizable;

(ii) if $(\mu_m)$ is tight sequence of probability measures on $(\mathcal{Y},\mathscr{S})$, then there exists a subsequence  denoted also by $(m)$, a probability space $(\Omega,\mathscr{F},\mathbb{P})$ with $\mathcal{Y}$-valued Borel measurable variables $\xi_m$, $\xi$ such that $\mu_m$ is the law of $\xi_m$ and $\xi_m$ converges to $\xi$ almost surely on $\Omega$. Moreover, the law of $\xi$ is a Random measure.
\end{lemma}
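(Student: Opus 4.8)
The plan is to reduce the whole statement to the classical Skorokhod representation theorem on the compact metric space $[-1,1]^{\mathbb{N}}$, using the separating sequence $(f_m)$ to embed $\mathscr{Y}$. Fix a homeomorphism $\phi:\mathbb{R}\to(-1,1)$ and set $F:\mathscr{Y}\to[-1,1]^{\mathbb{N}}$, $F(y):=\big(\phi(f_m(y))\big)_{m\ge 1}$. Since the $f_m$ are continuous and separate points, $F$ is continuous and injective; moreover $\mathscr{Y}$ is Hausdorff and $\mathscr{S}=\sigma(f_m:m\ge 1)=\sigma(F)$, so $F$ is a Borel isomorphism of $(\mathscr{Y},\mathscr{S})$ onto $F(\mathscr{Y})$ endowed with the trace of the Borel $\sigma$-algebra of $[-1,1]^{\mathbb{N}}$. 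For part (i): if $K\subseteq\mathscr{Y}$ is compact then $F|_K$ is a continuous bijection from the compact space $K$ onto the Hausdorff space $F(K)$, hence a homeomorphism; since $[-1,1]^{\mathbb{N}}$ is metrizable, so is $F(K)$, and therefore so is $K$. The same remark shows that whenever $L\subseteq\mathscr{Y}$ is compact, $F^{-1}$ restricted to the compact set $F(L)$ is continuous; this will be used below.

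For part (ii), first push forward: $\nu_m:=F_*\mu_m$ are Borel probability measures on the compact metric space $[-1,1]^{\mathbb{N}}$. Tightness of $(\mu_m)$ yields an increasing sequence of compact sets $K_\ell\subseteq\mathscr{Y}$ with $\inf_m\mu_m(K_\ell)\ge 1-1/\ell$, so that $C_\ell:=F(K_\ell)$ is compact in $[-1,1]^{\mathbb{N}}$ and $\inf_m\nu_m(C_\ell)\ge 1-1/\ell$; in particular each $\nu_m$ is carried by the Borel $\sigma$-compact set $C:=\bigcup_\ell C_\ell\subseteq F(\mathscr{Y})$. By compactness of $[-1,1]^{\mathbb{N}}$ and Prokhorov's theorem there is a subsequence, not relabelled, with $\nu_m\Rightarrow\nu$; by the portmanteau theorem $\nu(C_\ell)\ge\limsup_m\nu_m(C_\ell)\ge 1-1/\ell$, so $\nu$ is carried by $C$ as well. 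Applying the classical Skorokhod representation theorem on the Polish space $[-1,1]^{\mathbb{N}}$ produces $(\Omega,\mathscr{F},\mathbb{P})$ and Borel maps $\eta_m,\eta:\Omega\to[-1,1]^{\mathbb{N}}$ with laws $\nu_m,\nu$ and $\eta_m\to\eta$ $\mathbb{P}$-a.s. Since $\nu_m,\nu$ are carried by $C\subseteq F(\mathscr{Y})$, the variables $\xi_m:=F^{-1}\circ\eta_m$ and $\xi:=F^{-1}\circ\eta$ are well defined (modulo a $\mathbb{P}$-null set), are $\mathscr{S}$-measurable, $\xi_m$ has law $\mu_m$, and the law of $\xi$ equals $(F^{-1})_*\nu$, which is carried by the $\sigma$-compact set $\bigcup_\ell K_\ell$ and is therefore a Radon measure.

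The remaining point — and the main obstacle — is to upgrade $\eta_m\to\eta$ in $[-1,1]^{\mathbb{N}}$ to convergence in the topology of $\mathscr{Y}$. Coordinatewise convergence only gives $f_m(\xi_k)\to f_m(\xi)$ for each fixed $m$ (writing $k$ for the running index to avoid clash with the coordinate index $m$), which is insufficient when $\mathscr{Y}$ is not metrizable; what is needed is that, for $\mathbb{P}$-a.e.\ $\omega$, the tail of the sequence $(\xi_k(\omega))_k$ eventually lies in a single compact set $K_\ell$. Granting this, let $\overline{L}$ be the closure in $\mathscr{Y}$ of $\{\xi_k(\omega)\}_k\cup\{\xi(\omega)\}$, a compact, hence (by part (i)) metrizable, subset of $\mathscr{Y}$; every subsequence of $(\xi_k(\omega))$ then has a further subsequence converging in $\overline{L}$ to some $y$, and continuity of each $f_m$ forces $f_m(y)=\lim f_m(\xi_{k_j}(\omega))=\lim\phi^{-1}(\eta_{k_j}^{(m)}(\omega))=\phi^{-1}(\eta^{(m)}(\omega))=f_m(\xi(\omega))$ for all $m$, so $y=\xi(\omega)$ by the separation property, whence $\xi_k(\omega)\to\xi(\omega)$ in $\mathscr{Y}$. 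To secure the trapping, I would not take the representation off the shelf but build it piece by piece over the $C_\ell$'s: on each compact metric space $C_\ell$ apply Prokhorov and the classical Skorokhod theorem to the part of the $\nu_m$'s lying on $C_\ell$, use that $F^{-1}$ is continuous on $C_\ell$ to transport the a.s.\ convergence into $K_\ell\subseteq\mathscr{Y}$, and finally glue these pieces together by a diagonal argument, the escaping mass at level $\ell$ being bounded by $1/\ell\to 0$. This diagonal/trapping construction, which is exactly where the uniform tightness of $(\mu_m)$ is genuinely used, is the delicate heart of the proof; everything else is bookkeeping.
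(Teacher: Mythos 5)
The paper does not prove this lemma at all: it is recalled verbatim as Jakubowski's version of the Skorokhod theorem, with a citation to Brze\'zniak--Ondrej\'at [BO] (which in turn rests on Jakubowski's original argument). So there is no ``paper's own proof'' to compare against; the comparison has to be against the known proof in the literature.

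Your treatment of part (i) is complete and correct, and your overall scaffolding for part (ii) --- embed $\mathscr{Y}$ continuously and injectively into $[-1,1]^{\mathbb{N}}$ via $F=(\phi\circ f_m)_m$, push the laws forward, apply Prokhorov and the portmanteau theorem to see that the weak limit $\nu$ is still carried by $C=\bigcup_\ell F(K_\ell)\subseteq F(\mathscr{Y})$, and exploit that $F^{-1}$ is continuous on each compact $C_\ell$ --- is exactly the right frame, and is the frame used in the literature. You also correctly identify the sole genuine difficulty: coordinatewise convergence $\eta_k\to\eta$ in $[-1,1]^{\mathbb{N}}$ does not transfer to convergence of $\xi_k=F^{-1}(\eta_k)$ in $\mathscr{Y}$ unless, for $\mathbb{P}$-a.e.\ $\omega$, the whole tail of $(\xi_k(\omega))_k$ is trapped in a single $K_\ell$. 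Given the trapping, your subsequence argument inside the metrizable compact $K_\ell$ is clean and correct.

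The gap is precisely the trapping step, and the sketch you offer to fill it does not work as stated. Taking ``the part of the $\nu_m$'s lying on $C_\ell$'' and applying Skorokhod level by level runs into two concrete obstructions. First, the conditional measures $\nu_m(\cdot\mid C_\ell)$ need not converge weakly even though $\nu_m\Rightarrow\nu$, because the $C_\ell$ can carry $\nu$-mass on their boundaries; this can be repaired by perturbing the compacts, but it is not free. Second, and more seriously, the ``glue by a diagonal, the escaping mass at level $\ell$ being $1/\ell$'' step does not actually produce a single sequence $(\xi_k)$ with the required unconditional laws $\mu_k$ and almost surely bounded random level $L_k:=\min\{\ell:\eta_k\in C_\ell\}$. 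Uniform tightness gives $\mathbb{P}(\eta_k\notin C_\ell)\le 1/\ell$ for every $k$, but $\sum_k 1/\ell$ is not small, and a.s.\ convergence of $\eta_k\to\eta$ in the product topology of $[-1,1]^{\mathbb{N}}$ does \emph{not} force $\limsup_k L_k<\infty$ a.s.: a convergent sequence in $C=\bigcup_\ell C_\ell$ with limit in $C$ need not be contained in any single $C_\ell$ (take $C_\ell=\{0\}\cup[1/\ell,1]\subset[0,1]$ and the sequence $1/k\to 0$). So an off-the-shelf Skorokhod representation on $[-1,1]^{\mathbb{N}}$, or a naive level-by-level gluing, leaves the trapping unproved. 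Jakubowski's actual argument replaces the generic representation with a bespoke recursive construction (built from a nested family of partitions refining the $C_\ell$'s) that is engineered precisely so that the random level is a.s.\ eventually constant; that construction is the substantive content of the theorem and is what your sketch omits. In short: right strategy, correct reduction, but the heart of the proof --- which you yourself flag as the delicate part --- is asserted rather than carried out, and the indicated gluing does not close it.
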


 We first recall the definitions of the countably generated Borel space and the standard
Borel space in the sense of Parthasarathy (cf. \cite[Chapter V, Definition 2.1 and 2.2]{P67}).

\begin{definition}
(Countably generated Borel space) A
Borel space $(\mathcal{X}, \mathscr{B})$ is said to be countably generated if there exists a denumerable class
$\mathcal{D} \subset \mathscr{B}$ such that $\mathcal{D}$ generates $\mathscr{B}$.
\end{definition}

\begin{definition}\label{de5}
(Standard Borel space) A countably
generated Borel space $(\mathcal{X}, \mathscr{B})$ is called standard if there exists a Polish
space $\mathcal{Y}$ such that the $\sigma$-algebras $\mathscr{B}$ and $\mathcal{Y}$ are $\sigma$-isomorphic.
\end{definition}

In order to apply Lemma \ref{sko1}, we recall the following result from \cite{Liang}.

\begin{theorem}\label{tha1}$($\cite[Theorem B.4]{Liang}$)$
Let $(\mathcal{X}, \mathscr{B})$ be any standard Borel space. Suppose that $\{f_m\}_{m\in\mathbb{N}}$ is an
$\mathscr{B}$-measurable sequence from $\mathcal{X}$ to $\mathbb{R}$, which separate the points of $\mathcal{X}$. Denote by $\sigma_0(\mathcal{X})  $ the $\sigma$-algebra generated by  $\{f_m\}_{m\in\mathbb{N}}$. Then $\sigma_0(\mathcal{X})=\mathscr{B}$.
\end{theorem}

\vspace{3mm}


\noindent\textbf{Data availability} Data sharing is not applicable to this article as no datasets were generated or analysed during
the current study.

\noindent\textbf{Statements and Declarations} On behalf of all authors, the corresponding author states that there is no conflict of interest.


\end{document}